 \numberwithin{equation}{section}
\def\bB{{\mathbb{B}}}
\def\bR{{\mathbb{R}}}
\def\R{{\mathbb{R}}}
\def\bZ{{\mathbb{Z}}}
\def\bN{{\mathbb{N}}}
\def\cA{{\mathscr{A}}}
\def\cB{{\mathscr{B}}}
\def\cC{{\mathscr{C}}}
\def\cD{{\mathscr{D}}}
\def\cG{{\mathscr{G}}}
\def\cH{{\mathscr{H}}}
\def\cI{{\mathscr{I}}}
\def\cP{{\mathscr{P}}}
\def\cV{{\mathscr{V}}}
\def\one{\mathds{1}}
\def\ve{\varepsilon}
\renewcommand{\d}{{\partial}}
\def\lec{\lesssim}
\def\gec{\gtrsim}
\DeclareMathOperator{\diam}{diam}
\def\Mod{\mathop\mathrm{Mod}} 					
\def\dim{\mathop\mathrm{dim}} 					
\def\dist{\mathop\mathrm{dist}} 						
\def\supp{\mathop\mathrm{supp}}					
\newcommand{\ps}[1]{\left( #1 \right)}
\newcommand{\ck}[1]{\left\{#1 \right\}}
\newcommand{\av}[1]{\left| #1 \right|}
\newcommand{\isif}[1]{\left\{\begin{array}{cc} #1
\end{array}\right.}
\def\warrow{\rightharpoonup}
\def\Xint#1{\mathchoice
{\XXint\displaystyle\textstyle{#1}}%
{\XXint\textstyle\scriptstyle{#1}}%
{\XXint\scriptstyle\scriptscriptstyle{#1}}%
{\XXint\scriptscriptstyle\scriptscriptstyle{#1}}%
\!\int}
\def\XXint#1#2#3{{\setbox0=\hbox{$#1{#2#3}{\int}$ }
\vcenter{\hbox{$#2#3$ }}\kern-.58\wd0}}
\def\avint{\Xint-}
\def\grad{\nabla}
\theoremstyle{plain}
\newtheorem*{maintheorem}{Main Theorem}
\newtheorem{theorem}{Theorem}
\newtheorem{corollary}[theorem]{Corollary}
\newtheorem{lemma}[theorem]{Lemma}
\theoremstyle{definition}
\numberwithin{equation}{section}
\numberwithin{theorem}{section}
  \DeclareFontFamily{U}{mathb}{\hyphenchar\font45} 
\DeclareFontShape{U}{mathb}{m}{n}{
      <5> <6> <7> <8> <9> <10> gen * mathb
      <10.95> mathb10 <12> <14.4> <17.28> <20.74> <24.88> mathb12
      }{}
\DeclareSymbolFont{mathb}{U}{mathb}{m}{n}
\DeclareMathSymbol{\toitself}{3}{mathb}{"FD}  
\def\@tocline#1#2#3#4#5#6#7{\relax
  \ifnum #1>\c@tocdepth 
  \else
    \par \addpenalty\@secpenalty\addvspace{#2}%
    \begingroup \hyphenpenalty\@M
    \@ifempty{#4}{%
      \@tempdima\csname r@tocindent\number#1\endcsname\relax
    }{%
      \@tempdima#4\relax
    }%
    \parindent\z@ \leftskip#3\relax \advance\leftskip\@tempdima\relax
    \rightskip\@pnumwidth plus4em \parfillskip-\@pnumwidth
    #5\leavevmode\hskip-\@tempdima
      \ifcase #1
       \or\or \hskip 1em \or \hskip 2em \else \hskip 3em \fi%
      #6\nobreak\relax
    \dotfill\hbox to\@pnumwidth{\@tocpagenum{#7}}\par
    \nobreak
    \endgroup
  \fi}
\begin{document}

\title{Poincar\'e Inequalities and Uniform Rectifiability}

\author[Azzam]{Jonas Azzam}
\address{Jonas Azzam\\
School of Mathematics \\ University of Edinburgh \\ JCMB, Kings Buildings \\
Mayfield Road, Edinburgh,
EH9 3JZ, Scotland.}
\email{j.azzam "at" ed.ac.uk}

\subjclass[2010]{
28A75, 
28A78, 
46E35, 
49J52,
53C23 
}

\begin{abstract}
We show that any $d$-Ahlfors regular subset of $\R^{n}$ supporting a weak $(1,d)$-Poincar\'{e} inequality with respect to surface measure is uniformly rectifiable.
\end{abstract}
\maketitle
\tableofcontents

\section{Introduction}

For $p\geq 1$, a metric measure space $X$ equal to the support of a  doubling measure $\mu$ admits a  {\it weak $(1,p)$-Poincar\'{e} inequality} if, for all measurable functions $u$ with constants $C,\lambda \geq 1$, we have 
 \[
 \avint_{B} |u-u_{B}|d\mu \leq C\diam B \ps{\avint_{\lambda B} \rho^{p}d\mu}^{\frac{1}{p}}
 \]
 where $\rho$ is any {\it upper gradient} for $u$, meaning for every $x,y\in X$,
 \[
 |u(x)-u(y)|\leq \int_{\gamma} \rho
 \]
 for any rectifiable curve $\gamma$ connecting $x$ to $y$ in $X$. This condition, introduced by Heinonen and Koskela in \cite{HK98},  is shared by a large class of metric spaces, such as the Heisenberg group and Ahlfors regular Riemannian manifolds of non-negative Ricci curvature,  and imposes certain geometric properties on the metric space $X$. For example, if $X$ is complete, then $X$ must be quasiconvex and in fact there are quantitatively many curves running through $X$ (we will be more specific about this later). 
 
By a result of Cheeger \cite{Che99}, such spaces also admit a differentiable structure that allows for a generalization of Rademacher's theorem.  Specifically, he showed there exist a countably family of Borel sets $\{U_{i}\}$ covering $\mu$-almost all of $X$ and Lipschitz maps $\phi_i:U_{i}\rightarrow \R^{d(i)}$ for some $d(i)\in \bN$ so that if $f:X\rightarrow \R$ is any Lipschitz function, then for $\mu$ almost every $x\in U_i$, there is a vector $df(x)\in \R^{d(i)}$ so that 

\[
\limsup_{y\rightarrow x} \frac{|f(y)-f(x) - df(x)\cdot (\phi_i(y)-\phi_i(x))|}{|y-x|}=0.
\]
(Here and below we will write $|x-y|$ rather than $d_{X}(x,y)$).  See also \cite{Kei04} for an improvement, \cite{KM16-primer} for a compact primer to this result, and \cite{BKO19} for a shorter proof using Guth's multilinear Kakeya inequality for neighbourhoods of Lipschitz graphs. Any complete metric measure space $(X,\mu)$ having such a collection of charts $(U_i,\phi_i)$ is called a {\it Lipschitz differentiability space}. 

There are many examples of spaces that have Poincar\'{e} inequalities (and hence a differentiable structure) but pathological geometric structure, such as the Heisenberg group and Laakso spaces \cite{Laa00}. Under certain conditions, however, these conditions imply nice structure. In particular, recall that a metric measure space $(X,\mu)$ is {\it $d$-rectifiable} if $X$ may be covered up to $\mu$-measure zero by countably many Lipschitz images of subsets of $\R^{d}$; we will say $X$ is $d$-rectifiable if $\cH^{d}|_X$ is $d$-rectifiable. Also recall that a metric measure space $(X,\mu)$ is {\it Ahlfors $d$-regular} if $X=\supp \mu$ where $\mu$ is Ahlfors regular, meaning there is $A >0$ so that for all $x\in X$,
\[
A^{-1}r^{d}\leq \mu(B(x,r))\leq Ar^{d} \;\; \mbox{ for all }\;\; x\in X, \;\; 0<r<\diam X.
\]

\begin{theorem}
\label{t:rectifiable}
If $(X,\mu)$ is an Ahlfors $d$-regular Lipschitz differentiability space so that $X\subseteq \R^{n}$, then $(X,\mu)$ is $d$-rectifiable. 
\end{theorem}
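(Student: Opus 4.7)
The plan is to apply the Lipschitz differentiability structure directly to the Euclidean coordinate functions of $\R^n$ restricted to $X$. The key observation is that because $X$ carries the ambient Euclidean metric, the derivatives of the coordinate functions will assemble into \emph{injective} linear maps at a.e.\ point of each chart, and this injectivity will force the chart map $\phi_i$ to be bilipschitz on sets of full $\mu$-measure. First I would fix a chart $(U_i,\phi_i)$ with $\phi_i:U_i\to\R^{d(i)}$ Lipschitz, and apply the differentiability to each coordinate function $\pi_j:X\to\R$ (for $j=1,\dots,n$) to obtain vectors $d\pi_j(x)\in\R^{d(i)}$ at $\mu$-a.e.\ $x\in U_i$. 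Assembling these into a linear map $L_x:\R^{d(i)}\to\R^n$ by $(L_xv)_j=d\pi_j(x)\cdot v$, these satisfy
\[
y - x \;=\; L_x(\phi_i(y)-\phi_i(x)) + o(|y-x|) \qquad \text{as } y\to x \text{ in } X.
\]

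Since $X \subseteq \R^n$ with the induced metric, the left-hand side has Euclidean norm exactly $|y-x|$, so the triangle inequality combined with the Lipschitz bound on $\phi_i$ yields
\[
\frac{1-o(1)}{\|L_x\|}\,|y-x| \;\leq\; |\phi_i(y)-\phi_i(x)| \;\leq\; \Lip(\phi_i)\,|y-x|.
\]
In particular $\|L_x\|\geq 1/\Lip(\phi_i)$, $L_x$ is injective, and $\phi_i$ is bilipschitz on a neighborhood of $x$ with constants depending on $x$. I would then invoke Egorov's theorem to pass from these pointwise estimates to uniform bilipschitz bounds on a countable family of Borel sets $E_{i,k}\subseteq U_i$ exhausting $U_i$ modulo a $\mu$-null set. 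On each $E_{i,k}$ the map $\phi_i$ is bilipschitz onto $F_{i,k}:=\phi_i(E_{i,k})\subseteq\R^{d(i)}$; Kirszbraun then extends the inverse to a Lipschitz map $\R^{d(i)}\to\R^n$ whose image contains $E_{i,k}$, so each $E_{i,k}$ is a Lipschitz image of a subset of $\R^{d(i)}$.

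To upgrade this from $d(i)$-rectifiability to $d$-rectifiability, the final step is to verify $d(i)=d$. The inequality $d\leq d(i)$ is immediate: $F_{i,k}$ inherits $d$-Ahlfors regularity from $E_{i,k}$ through the bilipschitz identification, and a $d$-Ahlfors regular subset of $\R^{d(i)}$ must have $d\leq d(i)$. The reverse inequality $d(i)\leq d$ is where I expect the main difficulty; I would extract it from the standard theory of chart dimensions in Lipschitz differentiability spaces (in the spirit of Cheeger's bound on chart dimensions for Ahlfors regular doubling spaces), but making this precise in the generality of LDS is the step most in need of careful justification. Once $d(i)=d$ is established on a set of full $\mu$-measure in each chart, each $E_{i,k}$ is bilipschitz to a subset of $\R^d$, hence $d$-rectifiable, and exhausting $X$ by the countable family $\{E_{i,k}\}_{i,k}$ yields the theorem.
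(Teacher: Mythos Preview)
Your bilipschitz-on-pieces argument via the coordinate functions is correct and is essentially a hands-on reproof of what the paper obtains by citing \cite[Theorem~14.2]{Che99}: differentiating the inclusion $X\hookrightarrow\R^n$ against the chart and using Egorov does give countably many Borel pieces $E_{i,k}$ on which $\phi_i$ is bilipschitz. Your lower bound $d\le d(i)$ is also fine, and agrees with the paper's use of \cite[Theorem~14.1]{Che99}. (A small remark: the displayed inequality only shows $\|L_x\|>0$, not that $L_x$ is injective; injectivity would require the uniqueness clause in the chart axioms, but you do not actually use injectivity anywhere, so this is harmless.)

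The genuine gap is exactly the step you flag: $d(i)\le d$. Your proposed remedy --- ``Cheeger's bound on chart dimensions'' --- does not work. Cheeger's original bound controls $d(i)$ only in terms of the doubling constant and yields $d(i)\le C(d)$ for some $C(d)>d$, which is useless here: with $d(i)>d$ your pieces $E_{i,k}$ are bilipschitz to subsets $F_{i,k}\subseteq\R^{d(i)}$ that are merely $d$-Ahlfors regular, and there is no reason such sets are $d$-rectifiable. The paper closes this gap with a genuinely deep input, Cheeger's conjecture as proved by De~Philippis--Marchese--Rindler \cite{DPMR17}: the pushforward $\phi_i[\mu|_{U_i}]$ is absolutely continuous with respect to $\mathscr{L}^{d(i)}$, while $\phi_i(U_i)$ has Hausdorff dimension at most $d<d(i)$, forcing $\mu(U_i)=0$ whenever $d(i)>d$. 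There is no known way to avoid a result of this strength here; once you invoke it, your argument and the paper's coincide on the hard step and differ only in that you unpack the final rectifiability conclusion rather than citing it.
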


This is alluded to in \cite[p. 259-260]{Kei03}, we sketch the proof (although using later results). If $(U_i,\phi_i)$ is one of the chart maps in the differentiable structure for $(X,\mu)$ and $\phi:U\rightarrow \R^{d(i)}$, then by \cite[Theorem 14.1]{Che99}, since $(X,\mu)$ is Ahlfors $d$-regular, $d(i)\geq d$. By Cheeger's conjecture \cite[Theorem 1.1]{DPMR17}, $\phi_i[\mu|_{U_i}]\ll \mathscr{L}^{d(i)}$ where $\mathscr{L}^{d(i)}$ is Lebesgue measure on $\R^{d(i)}$ and $\phi_i[\mu]$ is the push-forward measure. Since $\dim X=d$ and $\phi_i$ is Lipschitz, $\dim \phi_i(U_i)\leq d$, so if $d(i)>d$, then $\mathscr{L}^{d(i)}(\phi_i(U_i))=0$ and Cheeger's conjecture implies that 
\[
\mu(U_i)=\phi_i[\mu|_{U_{i}}](\phi_i(U_i))=0.\] 
Thus, almost every $x\in X$ is contained in the domain of a chart map $(U_i,\phi_i)$ where $d(i)=d$, i.e. $\phi_i:U_i\rightarrow \R^{d}$. By another result of Cheeger \cite[Theorem 14.2]{Che99}, $\mu$ is $d$-rectifiable\footnote{It was pointed out to the author by the referee that in the statement of \cite[Theorem 14.2]{Che99}, the statement assumes $\cH^{k(\alpha)}(V_{\alpha})>0$, when really he needs $\cH^{k(\alpha)}(V_{\alpha})<\infty$.}.

The main result of this paper is that, if we assume a $d$-Poincar\'{e} inequality, we in fact obtain better rectifiable properties of $(X,\mu)$ when $X$ is Euclidean.

\begin{maintheorem}
Let $n>d\geq 2$ be integers and $(X,\mu)$ be a closed Ahlfors $d$-regular space in $\R^{n}$ with constant $A\geq 1$ supporting a weak $(1,d$)-Poincar\'{e} inequality with constants $C,\lambda \geq 1$. Then $X$ is {\it uniformly $d$-rectifiable} (UR), meaning there are constants $L,c>0$ so that for every $x\in X$ and $0<r<\diam X$, there is an $L$-bi-Lipschitz image of a subset of $\R^{d}$ contained in $X\cap B(x,r)$ of $\cH^{d}$-measure at least $cr^{d}$. The constants $L$ and $c$ only depend on $n,C,\lambda$, and $A$.
\end{maintheorem}

Uniformly rectifiable sets were introduced by David and Semmes in \cite{DS}, and are a sort of quantitative version of a rectifiable set, in the sense that $X$ is UR if it is rectifiable {\it by the same amount and Lipschitz constant} in every ball. They feature in various results that characterize when a certain quantitative property holds on an Ahlfors regular set. For example, certain classes of singular integral operators are bounded on an Ahlfors regular set if and only if that set is UR \cite{DS}. The result above (and results about UR sets) are {\it quantitative} in the sense that there {\it exists} a formula indicating the dependencies of constants, but typically they are not ``effective" in the sense that they produce explicit formulas or inequalities relating the constants. For many applications, however, this (ineffective) quantitative dependence is sufficient.

One previous result similar to our Main Theorem is due to Merhej \cite{Mer17}, who showed  if additionally $d=n-1$ and the unit normal vectors to the set have small BMO norm, then locally $X$ is contained in a bi-Lipschitz image of $\R^{n-1}$ (rather than just containing big pieces of $\R^{n-1}$ as in the definition of UR). She also has a higher codimensional version of this result \cite{Mer16}, which again requires some small oscillation of the tangents in the set $X$. 

There are other similar results for sets that inherit a Poincar\'{e} condition from some stronger topoligcal assumptions: G. C. David showed that any compact Ahlfors $d$-regular locally linearly contractible complete oriented topological $d$-manifold is UR \cite[Theorem 1.13]{Dav16}, and such spaces support a weak $(1,d)$-Poincar\'{e} inequality by \cite{Sem96} (see also \cite[Theorem 6.11]{HK98}). This is more general than our result in that it holds for non-Euclidean metric spaces, although the topological condition is more restrictive than being Loewner. 

The proof of the Main Theorem goes roughly as follows: the Poincar\'{e} inequality implies that there are many curves passing through the set by a result of Heinonen and Koskela. Using Dorronsoro's theorem, we can show that, for many $x\in X$ and $r>0$, and for any $(d-1)$-dimensional plane $V$, we can find parts of $X$ that lie close to a line segment passing through $x$ in $B(x,r)$ and have large angle from $V$. Inductively, this means we can actually find parts of $X$ close to $d$ many line segments passing through $x$ that have large angle from each other. We then use similar arguments to show that, for most balls on $X$, $X$ is approximately contained in a $d$-dimensional plane in those balls (otherwise, we could also find parts of $X$ close to a $(d+1)$st-line passing through each $x$, but we know $X$ is $d$-rectifiable and so it must be approximately $d$-flat somewhere, violating the existence of this extra line).  These two geometric properties imply that in fact $X$ is close to a $d$-dimensional plane in the Hausdorff metric, and this implies uniform rectifiability by a result of David and Semmes. We point out that this aspect of finding approximate line segments in many directions is  in a way reminiscent of how Bate finds Alberti representations in differentiability spaces \cite{Bat15}. \\

We would like to thank David Bate, Mihalis Mourgoglou, and Tatiana Toro for discussing this problem with him at various points in time, and Guy C. David for answering his questions about differentiability spaces while we were both at the 2018 conference `` The Geometric Measure Theory and its Connections" in Helsinki. We would also like to thank the anonymous referees for their patience with an atrocious first draft, comments that greatly improved the readability of the paper, for noting a mistake in citing and using Cheeger's paper, and also for providing clues to fix the mistake.

\def\Mod{{\rm Mod}}
%
%
%
%
%
%
%

\section{Preliminaries}

\subsection{Notation}

We will write $a\lesssim b $ if there is a constant $C>0$ so that $a\leq C b$,  $a\lesssim_{t} b$ if the constant depends on the parameter $t$, and $a\sim b$ and $a\sim_{t} b$ to mean $a\lesssim b \lesssim a$ and 
$a\lesssim_{t} b \lesssim_{t} a$ respectively. 

Given a metric space  $X$ with metric $d$, we will use Polish notation and write $d(x,y)=|x-y|$. Whenever $A,B\subset X$ we define
\[
\mbox{dist}(A,B)=\inf\{|x-y|;\, x\in A, \, y\in B\}, \, \mbox{and}\, \, \mbox{dist}(x,A)=\mbox{dist}(\{x\}, A). 
\]
Let $\diam A$ denote the diameter of $A$ defined as
\[
\diam A=\sup\{|x-y|;\, x,y\in A\}.
\]

We will also use $|A|$ to denote the measure of a set when the measure is clear from context. For example, if $A\subseteq \R$, then $|A|$ denotes the $1$-dimensional Lebesgue measure of $A$. 

We let $B(x,r)\subseteq X$ denote the {\it closed} ball centered at $x\in X$ of radius $r>0$, and if  $X=\R^{n}$ we will often write $\bB=B(0,1)$. If $B$ is a generic ball, we will write $x_B$ for its center and $r_{B}$ for its radius, so $B=B(x_{B},r_{B})$. 

We let $\cG(n,d)$ denote the Grassmannian, that is, the set of $d$-dimensional subspaces of $\R^{n}$ (that is, the $d$-dimensional planes containing the origin), and $\cA(n,d)$ denote the affine Grassmannian, which is the set of all $d$-dimensional planes in $\R^{n}$ (not necessarily containing the origin). 

Given a plane $V\in \cA(n,d)$, we let $\pi_V:\R^{n}\rightarrow V$ denote the projection into $V$, $V'\in \cG(n,d)$ the $d$-dimensional plane parallel to $V$ and containing the origin, and $V^\perp\in \cG(n-d,n)$ the orthogonal complement of $V'$. Given two planes $V,W\in \cG(n,d)$ with $\dim V\leq \dim W$, we let
\[
\angle(V,W) = |\pi_{W^{\perp}}|_{V}| = \sup_{x\in V\cap \bB}\dist(x,W).
\]
that is, $\angle(V,W)$ is the norm of the linear operator $\pi_{W^{\perp}}:V\rightarrow W^\perp$. Note that if $L$ is a line, then $\angle(L,W)$ is comparable to the usual angle between $L$ and $W$. If $V,W\in \cA(n,d)$, we let $\angle(V,W):=\angle(V',W')$. Note that from the above definition, if $\dim U \leq \dim V\leq \dim W$, then
\begin{equation}
\label{e:angle-triangle}
\angle(U,W) \leq \angle(U,V)+\angle(V,W).
\end{equation}

\subsection{Curves and Modulus}

In this section we introduce the notion of modulus of curve families. For a  more in depth treatment, see \cite{Heinonen} or \cite{Vuorinen}. Below $X$ will denote a complete metric space with metric $d$ and $\mu$ a {\it $C$-doubling measure}, meaning that for any ball $B\subseteq X$, if $2B$ is the ball with same center but twice the radius, then $\mu(2B)\leq C\mu(B)$. 

By a {\it curve} $\gamma$, we will mean any continuous image of a compact interval $I\subseteq X$. Given $\gamma$, we will denote this function also  as $\gamma :I\rightarrow X$. We define the length of $\gamma$ as
\[
\ell(\gamma) = \sup_{t_{1}<\cdots < t_{k}}\sum d(\gamma(t_{i}),\gamma(t_{i+1}))\]
where the supremum is over all sequences $a=t_{1}<\cdots < t_{k}=b$ if the endpoints of $I$ are $a$ and $b$. If $I$ is not closed, we define the length of $\gamma$ to be the supremum over the lengths of all subcurves with closed domain. If $\gamma$ is of finite length, we say $\gamma$ {\it rectifiable}, and then $\gamma$ factors as $\gamma=  \gamma_{s}\circ s_{\gamma}$ where $s_{\gamma}:I\rightarrow [0,\ell(\gamma)]$ and $\gamma_{s}$ is the arclength parametrization, that is, a $1$-Lipschitz function $\gamma_{s}: [0,\ell(\gamma)]\rightarrow X$ with $\ell(\gamma_s|_{[0,t]})=t$. We will assume all rectifiable curves below are arclength parametrized. If all closed subcurves are rectifiable, we say $\gamma$ is {\it locally rectifiable}. 

Given a metric space $X$, a Borel measure $\mu$, a family of curves $\Gamma$ in $X$, and a Borel function $\rho$, we say $\rho$ is {\it admissible} for $\Gamma$ if for each locally rectifiable curve $\gamma\in \Gamma$,
\[
\int_{\gamma} \rho : = \int_{0}^{\ell(\gamma)} \rho\circ\gamma \geq 1  \;\; \mbox{for all} \;\; \gamma\in \Gamma.\]

Note that this notation means we are integrating $\rho$ composed with {\it the function} $\gamma$ and not $\rho$ on the {\it image} of $\gamma$. However, the former is at least the latter: since the arclength parametrization is $1$-Lipschitz, $\cH^{1}(\gamma(A))\leq |A|$ for any $A\subseteq [0,\ell(\gamma)]$ (see \cite[Theorem 7.5]{Mattila}), and so 
\begin{align*}
\int_{\gamma} \rho  
& =\int_{0}^{\infty} |\{t\in [0,\ell(\gamma)]: \rho\circ \gamma(t) >\lambda  \}|d\lambda \\
& =\int_{0}^{\infty} |\gamma^{-1}(\{x\in \gamma: \rho(x)>\lambda \})|d\lambda \\
& \geq \int_{0}^{\infty} \cH^{1}(\{x\in \gamma: \rho(x)>\lambda \})|d\lambda 
 =\int_{\gamma} \rho d \cH^{1}
\end{align*}
although these two integrals may not equal, for example if $\gamma$ doubles back on itself. If $\gamma$ is only locally rectifiable, we define $\int_{\gamma}\rho$ to be the supremum of $\int_{\gamma'}\rho$ over all rectifiable subcurves $\gamma'$.

We define the {\it $p$-modulus} of $\Gamma$ to be
\[
\inf \ck{\int \rho^{p}d\mu: \;\; \rho\mbox{ admissible for $\Gamma$}}.
\]

We say $(X,\mu)$ is a {\it $p$-Loewner space} if, whenever $E,F\subseteq X$ are two disjoint continua, and $\Gamma(E,F)$ is the collection of curves in $X$ starting in $E$ and ending in $F$, then
\[
\Mod_{p}(\Gamma(E,F))\gec_{t} 1 \;\;\mbox{ whenever }\;\; \Delta(E,F):=\frac{\dist(E,F)}{\min\{\diam E,\diam F\}}\leq t.
\]

\subsection{Ahlfors regular spaces}

The results in this section are also about modulus, but are specific to Ahlfors regular spaces. In particular, $X$ will now denote a complete metric space that is the support of an Ahlfors $d$-regular measure $\mu$.

The following lemma is standard, but we give a proof for completeness. 
\begin{lemma}
\label{l:short-curves}
Let $X$ be an Ahlfors $d$-regular Loewner space with measure $\mu$, $d\geq 2$, $B$ be a ball in $X$ and $E,F\subseteq B$ two disjoint continua so that $\Delta(E,F)\leq t$. Let $\Gamma_{C,B}(E,F)$ be those curves in $\Gamma(E,F)$ of length at most $Cr_{B}$. Then for $C$ large enough (depending on $t,d,\mu$, and the Loewner constants),
\begin{equation}
\label{e:loewner}
\Mod_{d}(\Gamma_{C,B}(E,F))\gec_{C,t} 1.
\end{equation}
\end{lemma}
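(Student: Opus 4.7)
The plan is to exploit subadditivity of the $d$-modulus. Write $\Gamma^{\mathrm{long}} := \Gamma(E,F) \setminus \Gamma_{C,B}(E,F)$ for the curves in $\Gamma(E,F)$ of length exceeding $Cr_B$. Subadditivity gives
\[
\Mod_d(\Gamma_{C,B}(E,F)) \geq \Mod_d(\Gamma(E,F)) - \Mod_d(\Gamma^{\mathrm{long}}),
\]
and the Loewner hypothesis provides $\Mod_d(\Gamma(E,F)) \gec_t 1$, so it suffices to show that $\Mod_d(\Gamma^{\mathrm{long}})$ can be made arbitrarily small by choosing $C$ large.

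I would introduce an intermediate scale $M$ and split $\Gamma^{\mathrm{long}} = \Gamma_1 \cup \Gamma_2$, where $\Gamma_1$ collects those long curves whose image lies in $MB$ and $\Gamma_2$ the rest. For $\Gamma_1$, the flat test density $\rho_1 := (Cr_B)^{-1}\one_{MB}$ is admissible, since $\int_\gamma \rho_1 \geq \ell(\gamma)/(Cr_B) > 1$, and Ahlfors regularity immediately gives $\int \rho_1^d\, d\mu \leq \mu(MB)/(Cr_B)^d \lec_A (M/C)^d$, hence $\Mod_d(\Gamma_1) \lec_A (M/C)^d$.

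For $\Gamma_2$ I would use the logarithmic density $\rho_2(x) := (|x-x_B|\log M)^{-1} \one_{MB \setminus B}(x)$. Given $\gamma \in \Gamma_2$, let $s^*$ be the first exit time of $\gamma$ from $MB$ and $s_1 := \sup\{s \in [0,s^*] : |\gamma(s)-x_B| \leq r_B\}$; then $\gamma|_{(s_1,s^*)} \subseteq MB\setminus B$ and, writing $f(s):= |\gamma(s)-x_B|$, continuity forces the image $f([s_1,s^*])$ to contain $[r_B, Mr_B]$. Since $\gamma$ is arclength-parametrized, $|f'|\leq 1$, so the coarea formula yields
\[
\int_\gamma \rho_2 \geq \frac{1}{\log M}\int_{s_1}^{s^*}\frac{|f'(s)|}{f(s)}\,ds \geq \frac{1}{\log M}\int_{r_B}^{Mr_B}\frac{du}{u} = 1.
\]
A dyadic decomposition $MB\setminus B = \bigcup_{k=0}^{O(\log M)}(2^{k+1}B\setminus 2^kB)$ combined with Ahlfors regularity then gives $\int \rho_2^d\,d\mu \lec_A (\log M)^{1-d}$, which tends to $0$ as $M\to\infty$ precisely because $d\geq 2$; this is the only place the hypothesis $d\geq 2$ enters.

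The conclusion is a two-step choice of parameters: first pick $M$ large, depending on the Loewner and Ahlfors constants (and thus on $t,d,\mu$), so that $\Mod_d(\Gamma_2)$ is less than a quarter of the Loewner lower bound; then pick $C\gg M$ so that $\Mod_d(\Gamma_1) \lec_A (M/C)^d$ is similarly small, and subadditivity finishes the proof. The main obstacle is verifying admissibility of $\rho_2$: one must isolate the subarc $[s_1, s^*]$ so that it stays in the annular shell $MB\setminus B$ while the radial coordinate $f$ is forced, by continuity rather than monotonicity, to traverse the full logarithmic range $[r_B, Mr_B]$.
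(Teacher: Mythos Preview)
Your proposal is correct and follows essentially the same strategy as the paper: split off curves that leave a large ball $MB$ (the paper writes $AB$) and bound their modulus with a logarithmic test function yielding $(\log M)^{1-d}$, then handle the remaining long curves inside $MB$ with the flat density $(Cr_B)^{-1}\one_{MB}$, and finally choose $M$ then $C$. The only cosmetic differences are that the paper swaps the labels $\Gamma_1,\Gamma_2$, applies the decomposition to all of $\Gamma(E,F)$ rather than just to $\Gamma^{\mathrm{long}}$, and uses the regularized weight $(r_B+|x-x_B|)^{-1}\one_{AB}$ in place of your annular $|x-x_B|^{-1}\one_{MB\setminus B}$.
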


\begin{proof}
Recall that there is a constant $C_0$ depending on $t$ and the Loewner constants so that 
\[
\Mod_{d}(\Gamma(E,F))\geq C_0.
\]
Let $\Gamma_{1}$ be those curves in $\Gamma(E,F)$ that contain a point outside of $AB$ for some $A\geq 2$ to be chosen shortly,  and $\Gamma_{2}$ be those curves in $\Gamma(E,F)$ contained in $AB$ but so that their length is at least $Cr_{B}$. Observe that 
\[
\Gamma(E,F)\subseteq \Gamma_{C,B}(E,F)  \cup \Gamma_1\cup \Gamma_2.
\]

We claim that for $A$ large enough and $C$ large enough depending on $A$ (and each depending on $C_0,d$, and the Ahlfors regularity), 

\begin{equation}
\label{e:gamma1gamma2}
\max_{i=1,2}\Mod_{d}(\Gamma_{i})<\frac{C_0}{4}. 
\end{equation}
If we show this, then by the above containment and the subadditivity of the modulus (see \cite[Equation (7.7)]{Heinonen}), 
\[
\Mod_{d} (\Gamma_{C,B}(E,F))
\geq \Mod_{d} (\Gamma(E,F))-\Mod_{d} (\Gamma_1)-\Mod_{d} (\Gamma_2)>\frac{C_{0}}{2}.
\]
which proves the lemma. So now we focus on \eqref{e:gamma1gamma2} and start by estimating $\Mod_{d}(\Gamma_{1})$. Let 
\[
\rho_{1}(x) = \frac{1}{r_{B}+|x-x_{B}|} \frac{1}{\log A}\one_{AB}
\]
Then it is not hard to show that $\int_{\gamma} \rho_{1}\gec 1$ for all $\gamma\in \Gamma_1$. Thus,
\[
\Mod_{d}(\Gamma_{1})
\leq \int \rho_1^{d}d\mu 
\lec (\log A)^{1-d}.
\]
See \cite[Theorem 7.18]{Heinonen} for a proof of a similar estimate. Since $d\geq 2$, we can choose $A$ large enough (depending on $d$ and $C_0$) so that 
\[
\Mod_{d}(\Gamma_{1})<\frac{C_0}{4}.
\]
 
Now we bound $\Mod_{d}(\Gamma_{2})$.  Let $\rho_2 = \frac{1}{Cr_{B}}\one_{AB}$. Then $\rho_2$ is admissible for $\Gamma_{2}$, and so
\[
\Mod_{d}(\Gamma_2)
\leq \int \rho_2^{d} 
\lec (Ar_{B})^{d} (Cr_{B})^{-d} = \frac{A^{d}}{C^{d}}.
\]
Hence, we can pick $C$ depending on $A$ and $C_0$ (so just really on $C_0$) so that 
\[
\Mod_{d}(\Gamma_2)<\frac{C_0}{4}.
\]
This proves \eqref{e:gamma1gamma2}, and thus finishes the proof.

\end{proof}

The connection between the Poincar\'{e} inequality an Loewner spaces is via the following result.

\begin{theorem}
\label{t:HK}
A complete, proper, Ahlfors $d$-regular metric measure space  $(X,\mu)$ admits a weak $(1,d)$-Poincar\'{e} inequality if and only if it is a $d$-Loewner space. The constants in the definition of the weak $(1,d)$-Poincar\'{e} depend on the Ahlfors regularity constant and the Loewner constants implicit in \eqref{e:loewner}, and conversely the constants in \eqref{e:loewner} depend on the Poincar\'{e} constants.
\end{theorem}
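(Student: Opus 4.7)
This is the classical Heinonen--Koskela equivalence from \cite{HK98}, and I would prove the two implications separately.

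For the direction $(1,d)$-PI $\Rightarrow$ $d$-Loewner, given disjoint continua $E,F$ with $\Delta(E,F)\leq t$ and a function $\rho$ admissible for $\Gamma(E,F)$, I would introduce the $\rho$-potential
\[
u(x)=\min\Big\{1,\ \inf_{\gamma:\,x\to E}\int_{\gamma}\rho\Big\},
\]
where the infimum ranges over locally rectifiable curves from $x$ to $E$. A direct check shows that $\rho$ is an upper gradient of $u$, that $u\equiv 0$ on $E$, and that $u\equiv 1$ on $F$ (every curve from $F$ to $E$ lies in $\Gamma(E,F)$ and so has $\int_{\gamma}\rho\geq 1$). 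The standard Hajlasz--Koskela dyadic telescoping of the $(1,d)$-PI then yields the pointwise upper-gradient inequality
\[
|u(x)-u(y)|\lec|x-y|\big(M\rho^{d}(x)+M\rho^{d}(y)\big)^{1/d}
\]
for $\mu$-a.e.\ $x,y$ in a ball containing $E\cup F$, where $M$ is the Hardy--Littlewood maximal operator. Choosing Lebesgue points $x\in E$ and $y\in F$ with $|x-y|\lec\min\{\diam E,\diam F\}$ makes the left-hand side equal to $1$, which forces $M\rho^{d}\gec \min\{\diam E,\diam F\}^{-d}$ on a set of positive $\mu$-mass. The weak $(1,1)$ bound for $M$ then converts this into $\int\rho^{d}\,d\mu\gec 1$, i.e.\ $\Mod_{d}(\Gamma(E,F))\gec 1$.

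For the reverse direction $d$-Loewner $\Rightarrow$ $(1,d)$-PI, fix $u$ with upper gradient $\rho$ and a ball $B$. I would estimate $\avint_{B}|u-u_{B}|\,d\mu$ by pairwise averaging: for $x,y\in B$, set $E_{x}=B(x,|x-y|/4)$ and $F_{y}=B(y,|x-y|/4)$; since $\Delta(E_{x},F_{y})\lec 1$, Lemma~\ref{l:short-curves} gives $\Mod_{d}(\Gamma_{x,y})\gec 1$ for the family $\Gamma_{x,y}$ of curves of length $\lec|x-y|$ contained in $\lambda B$ joining $E_{x}$ to $F_{y}$. The modulus--admissibility duality then supplies a curve $\gamma\in\Gamma_{x,y}$ satisfying
\[
\int_{\gamma}\rho\lec |x-y|\Big(\avint_{\lambda B}\rho^{d}\,d\mu\Big)^{1/d},
\]
and the upper-gradient inequality gives $|u(x')-u(y')|\leq\int_{\gamma}\rho$ for endpoints $x'\in E_{x}$, $y'\in F_{y}$. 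A Fubini-style average over $(x,y)\in B\times B$, together with a truncation absorbing the discrepancy between $(x,y)$ and $(x',y')$, produces the desired $(1,d)$-PI with constants depending only on the Ahlfors regularity constant and the Loewner constants.

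The main obstacle is the quantitative bookkeeping in the forward direction: to convert the existence of good Lebesgue points into a genuine lower bound on $\int\rho^{d}d\mu$, one must control the $\mu$-mass of small neighbourhoods of the continua $E$ and $F$. In an Ahlfors $d$-regular space this is supplied by the standard chain-of-balls argument, which shows that a continuum of diameter $D$ has $r$-neighbourhood of $\mu$-mass $\gec Dr^{d-1}$, and this is exactly what is needed to make the maximal-function estimate quantitative. The reverse direction is then essentially routine once Lemma~\ref{l:short-curves} is in hand.
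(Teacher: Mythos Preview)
The paper does not prove this theorem; it simply cites \cite[Theorems 5.7 and 5.12]{HK98} and adds one remark: the forward implication (PI $\Rightarrow$ Loewner) in Heinonen--Koskela is stated under a $\phi$-convexity hypothesis, and the paper notes that this is supplied by quasiconvexity, which in turn follows from the $(1,d)$-PI by Semmes' theorem (appendices of \cite{Che99,KM16-primer}). Your sketch is precisely the classical Heinonen--Koskela argument underlying those cited theorems, so it is consistent with what the paper invokes rather than an alternative route.

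One point the paper flags that your sketch glosses over is exactly this quasiconvexity step. In your PI $\Rightarrow$ Loewner argument you write ``choosing Lebesgue points $x\in E$ and $y\in F$'', but in an Ahlfors $d$-regular space with $d\geq 2$ the continua $E,F$ typically have $\mu$-measure zero, so there are no Lebesgue points of $u$ in $E$ or $F$. You acknowledge this in your obstacle paragraph and propose to work with $r$-neighbourhoods; to make that work you need to know that $u$ is close to $0$ (resp.\ $1$) on small neighbourhoods of $E$ (resp.\ $F$), and this is where quasiconvexity of $X$ is actually used (short curves from nearby points into $E$ force $u$ small there). So the Semmes step the paper singles out is not decoration but is what closes your Lebesgue-point gap. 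Similarly, in your Loewner $\Rightarrow$ PI direction you take $E_{x}=B(x,|x-y|/4)$ and $F_{y}=B(y,|x-y|/4)$ as the two continua; closed balls in a general metric space need not be connected, so you should instead take continua through $x$ and $y$ inside those balls of comparable diameter, which exist because Loewner spaces are quasiconvex.
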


This follows from \cite[Theorems 5.7 and 5.12]{HK98}. Note that the first of these theorems (the forward implication) requires $X$ to be $\phi$-convex; we won't define this, but it is satisfied when $X$ is quasiconvex, which holds when $X$ has a weak $(1,d)$-Poincar\'{e} inequality by a theorem of Semmes (see the appendices of \cite{Che99,KM16-primer}).

\subsection{Christ-David Cubes}

We recall the following version of ``dyadic cubes" for metric spaces, first introduced by David \cite{Dav88} for Ahflors regular sets, but generalized in \cite{Chr90} and \cite{HM12}.

 \begin{theorem}
Let $X$ be a doubling metric space. Let $X_{k}$ be a nested sequence of maximal $\rho^{k}$-nets for $X$ where $\rho<1/1000$ and let $c_{0}=1/500$. For each $k\in\bZ$ there is a collection $\cD_{k}$ of ``cubes,'' which are Borel subsets of $X$ such that the following hold.
\begin{enumerate}
\item For every integer $k$, $X=\bigcup_{Q\in \cD_{k}}Q$.
\item If $Q,Q'\in \cD=\bigcup \cD_{k}$ and $Q\cap Q'\neq\emptyset$, then $Q\subseteq Q'$ or $Q'\subseteq Q$.
\item For $Q\in \cD$, let $k(Q)$ be the unique integer so that $Q\in \cD_{k}$ and set $\ell(Q)=5\rho^{k(Q)}$. Then there is $\zeta_{Q}\in X_{k}$ so that
\begin{equation}\label{e:containment}
B_{X}(\zeta_{Q},c_{0}\ell(Q) )\subseteq Q\subseteq B_{X}(\zeta_{Q},\ell(Q))
\end{equation}
and $ X_{k}=\{\zeta_{Q}: Q\in \cD_{k}\}$.
\end{enumerate}
\label{t:Christ}
\end{theorem}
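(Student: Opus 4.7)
The plan is to follow the classical construction of dyadic cubes on a doubling metric space due to David and Christ: build a rooted-forest structure on $\bigcup_k X_k$ whose edges connect each net point to an ancestor at the next coarser scale, and then define each cube $Q \in \cD_k$ as the Borel set of points that are tree-descended from its center. The main obstacle will be the inner ball containment $B(\zeta_Q, c_0 \ell(Q)) \subseteq Q$ with the explicit constant $c_0 = 1/500$; this requires careful book-keeping of geometric-series bounds and is where the smallness $\rho < 1/1000$ becomes essential.

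First I would fix a well-ordering of each countable net $X_k$ to break ties. For each $k$ and each $\zeta \in X_k$, define the parent $\pi(\zeta) \in X_{k-1}$ as the nearest point of $X_{k-1}$ to $\zeta$ (using the well-order to break ties); by maximality of the $\rho^{k-1}$-net one has $|\zeta - \pi(\zeta)| \leq \rho^{k-1}$, and by nestedness $X_{k-1} \subseteq X_k$ I can further require $\pi(\zeta) = \zeta$ whenever $\zeta \in X_{k-1}$. Iterating gives each $\zeta \in X_k$ a unique ancestor $\pi^m(\zeta) \in X_{k-m}$ for every $m \geq 0$, with $|\zeta - \pi^m(\zeta)| \leq \rho^{k-m}/(1-\rho)$ by geometric sum. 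Simultaneously, form Borel Voronoi-type cells $V^k_\zeta$ partitioning $X$ by assigning each $x \in X$ to the minimum-index $\eta \in X_k$ minimising $|x-\eta|$; maximality yields $V^k_\zeta \subseteq B(\zeta, \rho^k)$, and the separation $|\zeta - \zeta'| \geq \rho^k$ of distinct points in $X_k$ gives $B(\zeta, \rho^k/2) \subseteq V^k_\zeta$.

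For $\zeta \in X_k$ I then declare
\[
Q_\zeta := \bigl\{x \in X : \pi^{j-k}(\hat\zeta_j(x)) = \zeta \text{ for every sufficiently large } j\bigr\},
\]
where $\hat\zeta_j(x)$ is the unique $\eta \in X_j$ with $x \in V^j_\eta$. A geometric-series argument shows that for each $x$ the level-$k$ ancestor of $\hat\zeta_j(x)$ stabilises as $j \to \infty$, so $\{Q_\zeta : \zeta \in X_k\}$ is a Borel partition of $X$, and the tree structure forces these partitions to be nested across $k$.

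The three properties then follow. Property (1) is immediate from the partition. Property (2) is built into the tree structure: if $Q_\zeta \cap Q_{\zeta'} \neq \emptyset$ with $\zeta \in X_k$, $\zeta' \in X_{k'}$, $k' < k$, then $\pi^{k-k'}(\zeta) = \zeta'$, so $Q_\zeta \subseteq Q_{\zeta'}$. The outer containment in (3) is a direct geometric-sum bound: for $x \in Q_\zeta$ and $j$ large one has
\[
|x - \zeta| \leq |x - \hat\zeta_j(x)| + |\hat\zeta_j(x) - \pi^{j-k}(\hat\zeta_j(x))| \leq \rho^j + \rho^k/(1-\rho),
\]
which tends to $\rho^k/(1-\rho) < 2\rho^k < 5\rho^k = \ell(Q)$ as $j \to \infty$. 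The inner containment $B(\zeta, \rho^k/100) \subseteq Q_\zeta$ is the delicate step: for $|x - \zeta| \leq \rho^k/100$ one verifies inductively in $j$ that $\hat\zeta_j(x)$ lies in the descendant subtree of $\zeta$. The key estimate is that any $X_i$-point whose level-$k$ ancestor differs from $\zeta$ lies at distance at least $\rho^k(1 - O(\rho))$ from $x$, by net separation at level $k$ together with the triangle inequality, whereas the chain of nearest-parent moves starting at $\hat\zeta_j(x)$ accumulates total displacement from $\zeta$ of at most $\rho^k/(1-\rho) + \rho^k/100$; this is strictly smaller exactly because $\rho < 1/1000$, and the value $c_0 = 1/500$ is tuned to leave room in this estimate.
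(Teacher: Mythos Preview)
The paper does not give its own proof of this theorem; it is quoted as a known result, with attribution to \cite{Dav88}, \cite{Chr90}, and \cite{HM12}, and no argument is supplied. So there is no in-paper proof to compare against. Your overall architecture---parent map on the nested nets, geometric-series control of ancestor drift, cubes built from the resulting tree---is exactly the strategy of those references.

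There is, however, a concrete error in your inner-ball step. You claim that any $\eta\in X_i$ whose level-$k$ ancestor is some $\zeta'\neq\zeta$ must lie at distance at least $\rho^k(1-O(\rho))$ from $x$, and you compare this to the accumulated displacement $\rho^k/(1-\rho)+\rho^k/100$, asserting the latter is strictly smaller. But the only bound you have on $\eta$ is $|\eta-\zeta'|\le \rho^k/(1-\rho)$ together with $|\zeta-\zeta'|\ge\rho^k$; the reverse triangle inequality then yields $|\eta-\zeta|\ge\rho^k-\rho^k/(1-\rho)=-\rho^{k+1}/(1-\rho)<0$, which is vacuous. The ancestor drift $\rho^k/(1-\rho)\approx\rho^k$ is of the \emph{same order} as the separation $\rho^k$, not smaller, so your claimed comparison goes the wrong way regardless of how small $\rho$ is. The same issue undercuts your stabilisation claim: the level-$k$ ancestor of $\hat\zeta_j(x)$ is only pinned to a ball of radius $\approx\rho^k$ about $x$, which can contain several points of $X_k$, and nothing in your sketch forces it to be eventually constant, so property~(1) is at risk for boundary points. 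The fix (carried out in \cite{HM12}) exploits nestedness more sharply: since $\zeta\in X_i$ for \emph{every} $i\ge k$, it competes as a candidate parent at each intermediate step, and one shows that if $|\eta-\zeta|<\rho^k/3$ then tracing the parent chain up to level $k+1$ gives $|\eta_{k+1}-\zeta|<\rho^k/2$, after which the final step forces $\eta_k=\zeta$ by separation. This is a genuinely different estimate from the single geometric sum you wrote down.
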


\subsection{$\beta$-numbers and flat balls in Euclidean Loewner spaces}

The objective of this section is to introduce $\beta$-numbers and gather together a few lemmas about them we will need later. The most important of these is that, given an Ahlfors $d$-regular subspace $(X,\mu)$ of $\R^{n}$ and a ball $B$ centered on $X$, we can find a large ball contained in $B$ where $X$ is approximately flat (how large the ball is will depend on how flat we would like $X$ to be in the ball). To quantify flatness, we use Jones' $\beta$-numbers.

Let  $X\subseteq \R^{n}$ be as above. For $V\in \cA(n,d)$, $x\in X$ and $r>0$, let
\[
\beta_{X}(x,r,V) =  \sup_{y\in B(x,r)\cap X} \frac{\dist(y,V)}{r}, \;\; \beta_{X}(x,r) = \inf_{V\in \cA(n,d)} \beta_{X}(x,r,V).
\]
Given a ball $B(x,r)$ centered on $X$, we will also sometimes write $\beta_{X}(B(x,r))$ for $\beta_{X}(x,r)$. It is not hard to show that, if $B(x,r)\subseteq B(y,s)$ are centered on $X$, then 
\begin{equation}
\label{e:betamonotone}
\beta_{X}(x,r,V)\leq \frac{s}{r} \beta_{X}(x,s,V).
\end{equation}

The objective of this section is to prove the following lemma.

\begin{lemma}
\label{l:flat-ball}
Let $(X,\mu)$ be an Ahlfors $d$-regular $d$-Loewner subspace of $\R^{n}$ and $d\geq 2$. For all $\ve\in (0,1/2)$, $x\in X$, and $0<r<\diam X$, there is $r'\gec_{\ve} r$ and $x'\in B(x,r/2)\cap X$ so that
\[
\beta_{X}(x',r')<\ve.\]
\end{lemma}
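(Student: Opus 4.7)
The strategy is a contradiction and compactness argument: rectifiability of $X$ already produces flat balls at \emph{some} scales, but a priori with no uniform control on their size relative to $r$; rescaling and passing to a limit allows me to upgrade this to the desired uniform estimate $r' \gec_{\ve} r$.

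First, by \Theorem{HK} the hypotheses imply that $(X,\mu)$ supports a weak $(1,d)$-Poincar\'{e} inequality with constants depending only on the Ahlfors regularity and Loewner constants. Hence $(X,\mu)$ is a Lipschitz differentiability space by \cite{Che99}, and by \Theorem{rectifiable} it is $d$-rectifiable. Combined with Ahlfors $d$-regularity, standard GMT shows that $\mu$-a.e.\ $x \in X$ carries a strong tangent $d$-plane $V_x$, so that $\beta_X(x,s,V_x) \to 0$ as $s \to 0$.

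Assume the lemma fails for some $\ve_0 \in (0,1/2)$. Then there exist $x_n \in X$ and $r_n \in (0,\diam X)$ such that $\beta_X(x',r') \geq \ve_0$ for all $x' \in X \cap B(x_n,r_n/2)$ and $r' \geq r_n/n$. Rescaling and translating, set $Y_n := r_n^{-1}(X - x_n)$. Each $Y_n$ is Ahlfors $d$-regular and $d$-Loewner with the same constants as $X$, contains $0$, has $\diam Y_n \geq 1$, and satisfies
\[
\beta_{Y_n}(y,s) \geq \ve_0 \quad \text{for every } y \in Y_n \cap B(0,1/2) \text{ and every } s \geq 1/n.
\]
By Blaschke selection and weak-$*$ compactness of locally uniformly bounded Radon measures, a subsequence satisfies $Y_n \to Y_\infty$ in Hausdorff distance on compact sets, and $\cH^d|_{Y_n}$ converges weakly-$*$ to a measure $\mu_\infty$ supported on $Y_\infty$. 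A routine argument using the definition of weak-$*$ convergence on slightly dilated and contracted balls shows $(Y_\infty,\mu_\infty)$ is Ahlfors $d$-regular with the same constant.

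The main obstacle is to show that $(Y_\infty,\mu_\infty)$ inherits the $d$-Loewner (equivalently, weak $(1,d)$-Poincar\'{e}) property with the same constants. This is a known stability result in the analysis-on-metric-spaces literature: for uniformly Ahlfors regular closed subsets of $\R^n$ converging in Hausdorff distance on compacts, the Loewner characterization via lower bounds on the $d$-modulus of curve families is preserved in the limit, because admissible functions in $Y_\infty$ can be transferred to nearby admissible functions in $Y_n$ and vice versa using the uniform doubling of the ambient metric. Granting this, the first paragraph applies to $(Y_\infty,\mu_\infty)$, so it is $d$-rectifiable. Since $\mu_\infty(Y_\infty \cap B(0,1/2)) > 0$, we may pick $y \in Y_\infty \cap B(0,1/4)$ at which $Y_\infty$ admits a strong tangent $d$-plane $V$, and so obtain $s_0 \in (0,1/4)$ with $\beta_{Y_\infty}(y,s_0,V) < \ve_0/3$. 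By Hausdorff convergence, choose $y_n \in Y_n$ with $|y_n - y|$ so small that $y_n \in Y_n \cap B(0,1/2)$ and $\beta_{Y_n}(y_n,s_0,V) < \ve_0$ for all large $n$. Taking $n > 1/s_0$ makes $s_0 \geq 1/n$ admissible, contradicting the lower bound on $\beta_{Y_n}$ and completing the proof.
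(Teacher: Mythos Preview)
Your argument is correct and follows essentially the same compactness/contradiction route as the paper: pass to a Hausdorff limit, use that the limit inherits the weak $(1,d)$-Poincar\'{e} inequality, apply \Theorem{rectifiable} to find a tangent point, and contradict the uniform lower bound on $\beta$. The one place you are vague is the stability of the Poincar\'{e}/Loewner property under the limit; the paper handles this by packaging it as \Lemma{limlem}, which invokes Keith's result \cite[Theorem 3]{Kei03} on persistence of Poincar\'{e} inequalities under measured Gromov--Hausdorff convergence---your heuristic about ``transferring admissible functions'' is not really how that proof goes, so you should cite Keith rather than sketch an ad hoc argument.
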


To prove this, we need to review some results about Hausdorff convergence. \\

Recall that a sequence of compact sets $X_j$ converge to another compact set $X$ in the Hausdorff metric in $\R^{n}$ if 
\[
\lim_{j\rightarrow\infty} \max\ck{\sup_{x\in X} \dist(x,X_{j}) ,\sup_{x\in X_j} \dist(x,X)} =0.
\]
Given closed nonempty but possibly unbounded sets $X_j$ and $X$ in $\R^{n}$, we will say $X_{j}\rightarrow X$ in the Hausdorff metric if  for each $R>0$ there is $\ve_{j}\downarrow 0$ so that $X_j\cap B(0,R+\ve_{j})$ converges to $X\cap B(0,R)$ in the Hausdorff metric, or equivalently, if 
\[
\lim_{j\rightarrow\infty} \max\ck{\sup_{x\in X\cap B(0,R)}\dist(x,X_{j}), \sup_{x\in X_j\cap B(0,R)}\dist(x,X)}=0 \;\; \mbox{ $\forall$ }R>0.
\]

\begin{lemma}
\label{l:ahlforshaus}
Let $\mu_j$ be a sequence of uniformly Ahlfors $d$-regular measures in $\R^{n}$, $X_j=\supp \mu_j$, and suppose $0\in X_j$ for all $j$. Suppose also that $\inf \diam X_j>0$. Then we may pass to a subsequence so that $X_j$ converges in the Hausdorff metric to a closed set $X$, $\mu_j$ converges weakly to an Ahlfors $d$-regular measure $\mu$ with the same constants and $\supp \mu=X$. 
\end{lemma}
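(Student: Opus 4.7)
The plan is a routine compactness-plus-diagonalization argument, combined with a bump-function sandwich to transfer the Ahlfors bounds to the weak limit. I would first pass to a subsequence so that $\mu_j$ converges weakly, then to a further subsequence so that the supports $X_j$ converge in the Hausdorff metric, and finally verify that $\supp\mu=X$ and that $\mu$ is Ahlfors $d$-regular with the same constant $A$.

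For measure compactness, observe that since $0\in X_j$ and $\mu_j$ is $A$-Ahlfors regular, $\mu_j(B(0,R))\leq AR^d$ for every $R>0$ (taking radius $R\wedge\diam X_j$ and passing to a limit when $\diam X_j\leq R$). This local uniform boundedness plus the Banach--Alaoglu/Riesz compactness theorem for Radon measures yields a subsequence along which $\mu_j\to\mu$ weakly against $C_c(\R^n)$. For support convergence, I would consider the distance functions $f_j(x):=\dist(x,X_j)$, which are $1$-Lipschitz with $f_j(0)=0$, hence locally uniformly bounded. Arzel\`a--Ascoli plus diagonalization over $B(0,R)$, $R=1,2,\ldots$, gives locally uniform convergence $f_j\to f$ to a $1$-Lipschitz $f$; setting $X:=\{f=0\}$, one checks directly from the local uniform convergence that $X_j\to X$ in the Hausdorff sense of the excerpt and that $0\in X$.

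To identify $\supp\mu=X$, fix $x\in X$ and a small $r>0$ with $r<\inf_j\diam X_j$, pick $x_j\in X_j$ with $x_j\to x$ (possible because $f_j(x)\to 0$), and test against a continuous $\phi$ with $0\leq\phi\leq 1$, $\phi\equiv 1$ on $B(x,r/2)$, and $\supp\phi\subseteq B(x,r)$. Since $B(x_j,r/4)\subseteq B(x,r/2)$ eventually, weak convergence gives
\[
\mu(B(x,r))\;\geq\;\int\phi\,d\mu\;=\;\lim_j\int\phi\,d\mu_j\;\geq\;\liminf_j\mu_j(B(x_j,r/4))\;\geq\;A^{-1}(r/4)^d,
\]
so $x\in\supp\mu$. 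Conversely, if $x\notin X$ then $f(x)>0$, and local uniform convergence forces $B(x,r)\cap X_j=\emptyset$ for some $r>0$ and all large $j$, so a bump-function test gives $\mu(B(x,r/2))=0$ and $x\notin\supp\mu$.

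Finally, to promote the Ahlfors bounds to $\mu$, fix $x\in X$, $0<r<\diam X$, and $\eta\in(0,1/4)$. Hausdorff convergence forces $\diam X_j\to\diam X$, so $(1+\eta)r<\diam X_j$ for all $j$ large; picking $x_j\in X_j$ with $x_j\to x$ and sandwiching $\one_{B(x,r)}$ between continuous bumps supported in $B(x_j,(1\pm 2\eta)r)$, the uniform bounds $A^{-1}s^d\leq\mu_j(B(x_j,s))\leq As^d$ together with weak convergence of $\mu_j$ yield
\[
A^{-1}\bigl((1-O(\eta))r\bigr)^d\;\leq\;\mu(B(x,r))\;\leq\;A\bigl((1+O(\eta))r\bigr)^d,
\]
and sending $\eta\to 0$ closes the argument. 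The only subtlety is that weak convergence only gives upper semicontinuity of mass on compacts and lower semicontinuity on opens, so equality on balls is not automatic; the bump sandwich plus $\eta\to 0$ is exactly what reconciles this with the two-sided Ahlfors inequality. I do not foresee any genuine obstacle beyond this bookkeeping.
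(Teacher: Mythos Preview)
Your argument is correct and matches the paper's sketch in all essentials: weak compactness of the $\mu_j$ from the uniform local mass bound $\mu_j(B(0,R))\lesssim R^d$, bump-function sandwiching to pass the Ahlfors constants to the limit, and identification of $\supp\mu$ with the Hausdorff limit of the $X_j$. The only organizational difference is that the paper sets $X:=\supp\mu$ first and then deduces $X_j\to X$ in the Hausdorff sense directly from weak convergence and Ahlfors regularity, whereas you extract a Hausdorff limit $X$ separately via Arzel\`a--Ascoli on the distance functions $f_j=\dist(\cdot,X_j)$ and only afterwards verify $\supp\mu=X$; both orderings are standard and equally valid. One small imprecision: local Hausdorff convergence does not literally force $\diam X_j\to\diam X$, but it does give $\liminf_j\diam X_j\geq\diam X$ (pick $y,z\in X$ with $|y-z|$ close to $\diam X$ and approximate by $y_j,z_j\in X_j$), and this one-sided inequality is all you actually use to get $(1+\eta)r<\diam X_j$ for large $j$.
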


The proof is not too bad, we just give some hints: first, $\mu_j|_{B(0,r)}$ is uniformly bounded by Ahlfors regularity for all $r>0$, so by a diagonalization argument, we may pass to a subsequence so that $\mu_j$ converges weakly insited every ball centered at 0 (and hence everywhere) to a Radon measure $\mu$. By testing against bump-functions, one can show $\mu$ is also Ahlfors $d$-regular with the same constant as the $\mu_j$. If $X=\supp \mu$, then it is not hard from here to use the weak convergence of these two measures to show $X_j\rightarrow X$ in Hausdorff distance.\\

Below is a compactness lemma we will need.

\begin{lemma}
\label{l:limlem}
Let $d\geq 2$, $\mu_{j}$ be an Ahlfors $d$-regular measure in $\R^{n}$, and  $X_j=\supp \mu_{j}$ be so that $(X_j,\mu_j)$ admits a weak $(1,p)$-Poincar\'{e} inequality for some $p>1$ with the same constants for all $j$, and suppose $0\in X_j$. Then there is a subsequence that converges in the Hausdorff distance to an Ahlfors $d$-regular set also satisfying a weak $(1,p)$-Poincar\'{e} inequality. 
\end{lemma}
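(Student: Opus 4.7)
The plan is first to extract a Hausdorff limit with Ahlfors regularity via \Lemma{ahlforshaus}, and then to show the Poincar\'e inequality survives this limit by passing through the Loewner/modulus characterization of Heinonen--Koskela.

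I first apply \Lemma{ahlforshaus} verbatim: along a subsequence, $X_j \to X$ in Hausdorff distance in $\R^n$, the measures $\mu_j$ converge weakly to an Ahlfors $d$-regular measure $\mu$ of the same constant, $X = \supp \mu$, and $0 \in X$ (since Hausdorff convergence preserves the anchor point). It remains to verify that $(X, \mu)$ admits a weak $(1,p)$-Poincar\'e inequality with constants depending only on the uniform data. By \Theorem{HK} (and its general-exponent analogue in Heinonen--Koskela) this is equivalent to the $p$-Loewner condition on $(X, \mu)$. So it suffices to show: for every $t \geq 1$ and every pair of disjoint continua $E, F \subset X$ with $\Delta(E, F) \leq t$, there is $c(t) > 0$ with $\Mod_p(\Gamma_X(E, F)) \geq c(t)$.

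The first step toward this is to approximate $E$ and $F$ by continua $E_j, F_j \subset X_j$. One covers $E$ by a fine net of Euclidean balls centered on $E$, replaces each center by a nearby point of $X_j$ (using Hausdorff convergence), and joins them by short curves inside $X_j$; such curves exist because the $X_j$ are uniformly quasiconvex, a consequence of the uniform Poincar\'e inequality via Semmes's theorem (cited after \Theorem{HK}). This yields $E_j \to E$ in Hausdorff distance with $\diam E_j \to \diam E$, and likewise $F_j \to F$. In particular $\Delta(E_j, F_j) \leq 2t$ for $j$ large, so the uniform $p$-Loewner bound for $(X_j, \mu_j)$ gives $\Mod_p(\Gamma_{X_j}(E_j, F_j)) \geq c(2t)$.

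The main obstacle is transferring this modulus bound to the limit. Given an admissible function $\rho$ for $\Gamma_X(E, F)$, I would construct a near-admissible function $\rho_j$ on $X_j$ by a maximal or convolution regularization at scale $\epsilon_j$, where $\epsilon_j$ exceeds the Hausdorff distance between $X_j$ and $X$. Admissibility of $\rho_j$ for $\Gamma_{X_j}(E_j, F_j)$ (up to a multiplicative factor tending to $1$) follows by approximating any curve in $X_j$ by a curve in $X$ of comparable length, again using quasiconvexity of both spaces, so that $\int_{\gamma_j} \rho_j \gec \int_\gamma \rho \geq 1$; meanwhile the weak convergence $\mu_j \to \mu$, together with uniform Ahlfors regularity (which controls the support neighborhood), forces $\int \rho_j^p d\mu_j \to \int \rho^p d\mu$. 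Combined with the uniform Loewner bound for the $X_j$, this yields $\int \rho^p d\mu \geq c(t)$ after passing to the limit; taking the infimum over admissible $\rho$ completes the Loewner estimate for $(X, \mu)$, and converting back via \Theorem{HK} gives the Poincar\'e inequality with constants depending only on the uniform data. The delicate part is this final transfer, where careful control over curve perturbations between $X_j$ and $X$ is required.
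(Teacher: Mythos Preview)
Your approach differs substantially from the paper's. After invoking \Lemma{ahlforshaus} just as you do, the paper does not argue directly at all: it observes that Hausdorff convergence of the $X_j$ in $\R^n$ together with weak convergence of the $\mu_j$ amounts to pointed measured Gromov--Hausdorff convergence of $(X_j,|\cdot|,0,\mu_j)$ to $(X,|\cdot|,0,\mu)$, and then simply cites Keith's stability result \cite[Theorem~3]{Kei03}, which says the weak $(1,p)$-Poincar\'e inequality passes to such limits with quantitative control on the constants. That is the entire proof.

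Your route through the Loewner characterization is in principle viable---it is close in spirit to how results like Keith's are established---but as written it has two gaps. First, there is a circularity: you perturb curves from $X_j$ into $X$ ``using quasiconvexity of both spaces'', but quasiconvexity of $X$ is a \emph{consequence} of the Poincar\'e inequality you are trying to prove. This is repairable (quasiconvexity of $X$ follows from uniform quasiconvexity of the $X_j$ by an Arzel\`a--Ascoli/diagonal argument, independently of Poincar\'e), but you need to say so explicitly. Second, the step you yourself flag as ``delicate''---showing $\int_{\gamma_j}\rho_j \gtrsim \int_\gamma \rho$ for the perturbed curve---is not merely delicate but is essentially the entire content of the stability theorem; your one-line description does not explain why the regularized $\rho_j$ evaluated along $\gamma_j$ is controlled from below by $\rho$ along a nearby curve $\gamma$ in $X$, and making this precise (controlling the maximal-type regularization against the line integral, handling curves that escape to large scales, matching endpoints in $E$ and $F$) is real work. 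What you have written is a plausible strategy outline rather than a proof; the paper sidesteps all of this by outsourcing to \cite{Kei03}.
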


%
%

\begin{proof}
By Lemma \ref{l:ahlforshaus}, we may pass to a subsequence so that $X_j$ converges in the Hausdorff metric to a set $X\subseteq \R^{n}$ and so that $\mu_j$ conveges weakly to an Ahlfors $d$-regular measure $\mu$ supported on $X$. Thus, the tuples $(X_{j},|\cdot|,0,\cH^{d}|_{X_j})$ forms a sequence of uniformly Ahlfors $d$-regular complete pointed metric spaces converging in the measured Gromov-Hausdorff sense to $(X,|\cdot|,0,\cH^{d}|_{X_j})$ (this is a lot of terminology to unpack, so we instead refer the reader to \cite[Section 2]{Kei03} as a reference). By \cite[Theorem 3]{Kei03}, $(X,\mu)$ also satisfies the weak $p$-Poincar\'{e} inequality with constants depending on the uniform Poincar\'{e} constants for the $(X_j,\mu_j)$. 
\end{proof}

\begin{proof}[Proof of Lemma \ref{l:flat-ball}]
It suffices to prove the lemma in the case that $x=0$ and $r=1$. Suppose there was $\ve\in (0,1/4)$ and a sequence of Ahlfors $d$-recular $d$-Loewner spaces $(X_{j},\mu_j)$ in  $\R^{n}$ with the same constants so that $\diam X_j'\geq 1$, $0\in X_j$, and for all $x'\in B(0,1/2)\cap X_j$ and $r'\geq 1/j$, 
\[
\beta_{X_j}(x',r')\geq \ve.
\]
These spaces satisfy a weak $(1,d)$-Poincar\'{e} inequality with the same constants for all $j$ by Theorem \ref{t:HK}. We can pass to a subsequence so that they converge in the Hausdorff metric to another $d$-regular set $X$ satifsying a weak $(1,d)$-Poincar\'{e} inequality and $\mu_{j}\warrow \mu$ for some Ahlfors $d$-regular measure $\mu$. By Cheeger's theorem and Theorem \ref{t:rectifiable}, $(X,\mu)$ is $d$-rectifiable. Since $\mu\ll\cH^{d}|_{X}\ll \mu$ by Ahlfors $d$-regularity, this means $X$ is $d$-rectifiable. In particular, $X$ has a tangent at some point $x\in X\cap B(0,1/2)$ (see the discussion after \cite[Theorem 1.1]{Vil17} and \cite[Section 3]{Vil17}), so there is a plane $P$ passing through $x$ and $r>0$ small so that 
\begin{equation}
\label{e:betaxrP<e/4}
\beta_{X}(x,r,P)<\ve/4.
\end{equation}
There is $\ve_{j}$ so that $X_j\cap B(0,1+\ve_{j})$ converges to $X\cap B(0,1)$ in the Hausdorf metric, so for $j$ large enough, 
\[
\sup_{x'\in X\cap B(x,r)} \dist(x',X_j) + \sup_{x'\in X_{j}\cap B(x,r)} \dist(x',X)<\frac{\ve r}{4}.
\]
 In particular, for $j$ large enough we can find $x_j\in X_j\cap B(x,\ve r/4)$ and for each $y'\in B(x_{j},r/2)\cap X_{j}\subseteq B(x,r)\cap X_j$, there is $y\in X$ with $|y-y'|<\frac{\ve r}{4}$. For each such $y$, 
 \[
 |y-x|\leq |y-y'|+|y'-x_j|+|x_j-x|
 \leq \frac{\ve r}{4}+\frac{r}{2}+ \frac{\ve r}{4}<r
\]
so $y\in B(x,r)\cap X$. Thus,
\[
\dist(y',P)
\leq |y'-y|+\dist(y,P)
\stackrel{\eqref{e:betaxrP<e/4}}{<} \frac{\ve r}{4} + \frac{\ve r}{4} = \frac{\ve r}{2}.
\]
If we take the supremum over all $y'\in B(x_{j},r/2)\cap X_{j}$, then for $1/j<r/2$, by how we chose the $X_j$.
\[
\ve\leq \beta_{X_{j}}(x_j,r/2)<\ve,
\]
which is a contradiction.

\end{proof}

\section{Proof of the Main Theorem}

From now on we let $(X,\mu)$ denote a closed Ahlfors $d$-regular Loewner space in $\R^{n}$. We can assume without loss of generality that $\mu=\cH^{d}|_{X}$ and just say $X$ is a closed Ahlfors $d$-regular Loewner space in $\R^{n}$. We will frequently denote the $\mu$-measure of a set $A\subseteq X$ by $\mu(A)=\cH^{d}(X\cap A)=|A|$. 

We will assume all implied constants that appear below depend on $n$, $d$, and also on the Poincar\'{e} and Ahlfors regularity constants for $X$, and hence write $\sim$ instead of $\sim_{d,n,A,C,\lambda}$.

The Main Theorem will follow from three main lemmas. The first lemma is a black-box theorem due to David and Semmes  \cite[Theorem I.2.4]{of-and-on}. First we need some new notation.

For $x\in X$, $r>0$ and $V\in \cA(n,d)$, we define the $d$-dimensional bilateral $\beta$-number  with respect to $V$ to be 
\[b\beta_{X}(x,r,V)=r^{-1}\left(\sup_{y\in
X\cap B(x,r)}\dist(y,V)+ \sup_{y\in V\cap
B(x,r)}\dist(y,X)\right)\] 
and then define
\[b\beta_{X}(x,r)=\inf_{V\in \cA(n,d)}b\beta_{X}(x,r,V).\]

\begin{lemma}[The Bilateral Weak Geomeric Lemma (BWGL)]
\label{l:BWGL}
An Ahlfors $d$-regular set $E\subseteq \R^{n}$ is UR if and only if, for each $\ve>0$ and $R\in \cD$ (where $\cD$ are the Christ-David cubes for $E$)
\begin{equation}
\label{e:BWGL}
\sum_{Q\subseteq R \atop b\beta_{E}(2B_{Q})\geq \ve}|Q|\lec_{\ve} |R|.
\end{equation}
\end{lemma}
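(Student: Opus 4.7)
The plan is to establish the equivalence by routing both directions through a \emph{corona decomposition}, which is the standard intermediate object in the David--Semmes theory. Recall that a corona decomposition partitions the Christ--David cubes of $E$ into a ``bad'' collection $\cB$ satisfying a Carleson packing condition and a ``good'' collection organized into coherent stopping regions (trees) $\cT$, each equipped with a Lipschitz graph $\Gamma_{\cT}$ over some $d$-plane $V_{\cT}$, such that the Lipschitz constants of the $\Gamma_{\cT}$ are uniformly small and the top cubes of the trees also satisfy a Carleson packing. The existence of such a decomposition is already known (by other results from \cite{of-and-on}) to be equivalent to UR, so it is enough to show that each side of the BWGL is equivalent to the existence of a corona decomposition.

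First I would handle the easier direction: corona decomposition implies BWGL. Fix $\ve > 0$ and $R \in \cD$. For any cube $Q \subseteq R$ that is not bad and sits in the interior of a tree $\cT$, the Lipschitz graph $\Gamma_{\cT}$ approximates $E$ bilaterally near $Q$ at scale $\ell(Q)$: the graph lies close to $E$ because the stopping conditions prevent $E$ from drifting away, and $E$ lies close to the graph by construction. Since Lipschitz graphs with sufficiently small constant have small bilateral $\beta$ relative to a suitable affine plane (take $V$ parallel to $V_{\cT}$ through $x_Q$), one obtains $b\beta_E(2B_Q) < \ve$ outside a collection of cubes controlled by bad cubes plus top cubes of trees plus cubes near the stopping boundary. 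All three of these are Carleson, giving \eqref{e:BWGL}.

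The main obstacle is the reverse direction: BWGL implies a corona decomposition. Given the BWGL, I would run a stopping-time argument on $\cD$. Starting from a top cube $R_0$, define a tree by including all descendants $Q$ for which $b\beta_E(2B_Q) < \ve$ and the best approximating plane $V_Q$ has not rotated too much from $V_{R_0}$ (quantitatively, $\angle(V_Q, V_{R_0}) < \eta$ for some small $\eta$). When either condition fails, stop and begin a new tree at that cube. The BWGL and a geometric-series estimate on how fast planes can tilt when bilateral betas are small (controlled by $\sum b\beta^2$-type sums cascading from parent to child) together show that the collection of stopping cubes forms a Carleson family. On each tree, the bilateral approximation at every scale lets one construct a $C\ve$-Lipschitz graph $\Gamma_{\cT}$ over $V_{R_{\cT}}$ that approximates the portion of $E$ corresponding to the tree, using a standard partition-of-unity construction that interpolates between the local best-fit planes $V_Q$ at the dyadic scales.

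The genuinely delicate step is showing that the bilateral smallness of the $\beta$ numbers at many scales forces the best-fit planes to be \emph{consistent} enough that the interpolation yields a genuine Lipschitz graph rather than a multivalued ``sheet''; here one needs a quantitative tilt lemma of the form $\angle(V_Q, V_{Q'}) \lesssim b\beta_E(2B_Q) + b\beta_E(2B_{Q'})$ when $Q, Q'$ are comparable cubes, together with a summability argument along chains of cubes. Once the Lipschitz graphs exist, combining them with the Carleson bound on stopping cubes produces the corona decomposition, which in turn (by the known equivalence) yields UR with constants depending only on $\ve$ and the Ahlfors regularity constant.
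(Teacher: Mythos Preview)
The paper does not prove this lemma at all: it is stated as a black-box result and attributed directly to David and Semmes \cite[Theorem I.2.4]{of-and-on}. So there is no ``paper's own proof'' to compare against; the author simply invokes the result.

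Your sketch, by contrast, is an outline of the actual David--Semmes argument, and the route you describe (BWGL $\Leftrightarrow$ corona decomposition $\Leftrightarrow$ UR) is indeed the one taken in \cite{of-and-on}. As a high-level plan it is accurate: the easy direction does follow from the fact that cubes deep inside a tree are bilaterally well approximated by the tree's Lipschitz graph, and the hard direction is a stopping-time construction with a tilt estimate $\angle(V_Q,V_{Q'}) \lesssim b\beta_E(CB_Q)$ for nearby cubes, followed by a partition-of-unity graph construction. Two caveats if you intend this as more than a sketch: first, the stopping-time argument in \cite{of-and-on} is not organized quite as you describe (one does not stop on large angle deviation from a fixed $V_{R_0}$ directly; rather, one controls cumulative tilt along chains and uses the WGL-type Carleson bound to pack the stopping cubes), and getting the Carleson packing of tree tops from BWGL alone requires care beyond a ``geometric-series estimate.'' Second, the graph construction and the verification that it is Lipschitz and bilaterally close to $E$ on the tree is genuinely long---this is the content of several sections of \cite{of-and-on}---so your single paragraph is hiding most of the work. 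For the purposes of this paper, however, citing \cite[Theorem I.2.4]{of-and-on} as the author does is the appropriate move.
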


Thus, the Main Theorem will follow once we show $X$ satisfies the BWGL. This will follow from the next two lemmas which will be the focus of the paper. 

\begin{lemma}[Weak Geometric Lemma (WGL)] 
\label{l:wgl}
Let $X$ be a closed Ahlfors $d$-regular Loewner space in $\R^{n}$. For $R\in \cD$ and $\delta>0$,
\[
\sum_{Q\subseteq R \atop \beta_{X}(B_{Q})\geq \delta}|Q|
\lec |R|.
\]
\end{lemma}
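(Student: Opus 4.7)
The plan is to deduce \Lemma{wgl} from \Lemma{flat-ball} by extracting a density lemma and then running a stopping-time packing argument. The key observation is that \Lemma{flat-ball} admits a density-type reading: within each Christ--David cube, a quantitative portion of its mass lies in a flat sub-ball where $\beta_X$ is so small that all nearby Christ--David sub-cubes at comparable scale are automatically ``good.''

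\smallskip\noindent\textbf{Step 1 (density lemma).} For $Q \subseteq R$ with $\beta_X(B_Q) \geq \delta$, apply \Lemma{flat-ball} to $B_Q$ with $\varepsilon = \delta/C$ for a sufficiently large constant $C$, producing $x' \in B_Q \cap X$ and $r' \gtrsim_\delta \ell(Q)$ with $\beta_X(x', r') < \delta/C$. By the monotonicity \eqref{e:betamonotone}, every Christ--David sub-cube $Q' \subseteq Q$ whose ball $B_{Q'}$ lies inside $B(x', r')$ and whose side length is comparable to $r'$ satisfies $\beta_X(B_{Q'}) < \delta$, so $Q' \notin \mathcal{B}_R := \{Q \subseteq R : \beta_X(B_Q) \geq \delta\}$. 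Ahlfors regularity of $\mu$ then ensures that the union of these good sub-cubes covers a subset $G_Q \subseteq Q$ of measure $|G_Q| \geq \eta|Q|$ for some $\eta = \eta(\delta) > 0$. In words: every bad cube has, $N=N(\delta)$ levels below, a definite fraction of good sub-cubes.

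\smallskip\noindent\textbf{Step 2 (Carleson packing).} To estimate $\sum_{Q \in \mathcal{B}_R} |Q| = \int_R \#\{Q \in \mathcal{B}_R : x \in Q\}\, d\mu(x)$, I would run a stopping-time construction below $R$, producing a collection of top cubes $\mathcal{T}$ with the properties: (i) each generation of $\mathcal{T}$ loses a factor of $1 - \eta$ of the total measure of the preceding generation, whence $\sum_{T \in \mathcal{T}} |T| \lesssim_\delta |R|$ via a geometric series; and (ii) each corona, consisting of the Christ--David cubes lying between a top cube $T$ and its children in $\mathcal{T}$, spans only $O_\delta(1)$ scales and so contributes $O_\delta(|T|)$ bad cubes to the sum. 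Summing over $T \in \mathcal{T}$ then yields $\sum_{Q \in \mathcal{B}_R} |Q| \lesssim_\delta |R|$, which is the WGL.

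\smallskip\noindent\textbf{Main obstacle.} The difficulty is that \eqref{e:betamonotone} controls $\beta_X$ in the wrong direction: a flat large ball regulates only sub-balls of comparable size, so a single application of the flat-ball lemma settles only $O_\delta(1)$ consecutive scales. Moreover, a non-bad cube can still harbor bad descendants, so the naive scale-wise recursion $b_{k+N} \leq (1-\eta) b_k$ on the total measure of bad cubes at level $k$ fails outright. The stopping-time/corona construction is the standard device that navigates this mismatch: the flat-ball lemma is reapplied at every freshly bad stopping cube, and the density bound from Step 1 drives the geometric decay needed to pack the whole collection against $|R|$.
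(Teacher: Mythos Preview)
Your approach has a genuine gap: the density lemma you extract from \Lemma{flat-ball} is \emph{not} sufficient to conclude the WGL, and no stopping-time or corona argument can repair this from that input alone.

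To see why, consider an abstract counterexample. Work with dyadic intervals in $[0,1]$ and declare a level-$k$ interval ``bad'' if and only if its $k$-th binary digit is $0$. Then every cube has exactly one good child and one bad child, so your density lemma holds with $N=1$ and $\eta=\tfrac12$. Yet the total bad mass at level $k$ equals $\tfrac12|R|$ for every $k$, so $\sum_{Q\subseteq R,\;Q\text{ bad}}|Q|=\infty$. Since your Step~2 uses nothing about the bad cubes beyond the density property of Step~1, it cannot succeed. The difficulty you correctly flag in your obstacle paragraph---that good cubes may harbour bad descendants---is fatal rather than a technicality: in the counterexample the good cubes at depth $N$ immediately spawn fresh bad cubes, and this repeats at every scale, so neither version of your top-cube family $\mathcal{T}$ can simultaneously satisfy your conditions (i) and (ii). What the packing lemma \cite[Lemma IV.1.12]{of-and-on} actually requires is the much stronger statement that inside every $R$ there is a set $G_R$ of measure $\gtrsim|R|$ on which the \emph{total count} $\sum_{x\in Q\subseteq R}\mathbf{1}_{Q\text{ bad}}$ is bounded; a one-scale flat-ball cannot by itself produce such a set.

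The paper therefore takes an entirely different route. It introduces the auxiliary quantities $\eta_X^{V,\theta}$ and $\xi_{X,V}^{\theta,\beta}$, proves genuine square-function Carleson estimates for them using the Loewner property, curve-family modulus, and Dorronsoro's theorem (Lemmas \ref{l:eta-carleson}, \ref{l:d-lines}, and \ref{l:xi-carleson}), and then shows in \Lemma{eta-xi-beta} that $\beta_X(B_Q)\gtrsim\beta$ forces either $\eta_X^{\theta}(Q)$ or $\xi_X^{\theta,\beta}(Q)$ to be large. \Lemma{flat-ball} enters only inside that last step, to locate a single flat sub-ball that yields the contradiction; the Carleson packing itself comes from the curve-family machinery, not from \Lemma{flat-ball}.
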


This is the so-called {\it weak geometric lemma} (WGL) in the argot of David and Semmes \cite[Chapter 5]{DS}. This property alone does not imply UR (see \cite[Section 20]{DS}), but it will when coupled it with the following result:

\begin{lemma}[Many Segments Property (MS)]
\label{l:d-lines}
Let $X$ be a closed Ahlfors $d$-regular Loewner space in $\R^{n}$. For $x\in X$, $r>0$, and $\theta>0$, define 
\[
\eta_{X}^{\theta}(x,r)=\inf_{L_{1},...,L_{d}} \sup_{y\in (L_{1}\cup\cdots \cup L_{d})\cap B(x,r)} \frac{\dist(y,X)}{r}
\]
where the infimum is over all lines $L_{1},...,L_{d}$ passing through $x$ so that $\angle(L_{k+1},U_{k})\geq \theta/2$ for $k=1,2,..,d-1$ where $U_k$ is the $k$-dimensional plane spanned by the lines $L_{1},...,L_k$ , and set
\[
\eta_{X}^{\theta}(Q) =  \sup_{x\in Q}\eta_{X}^{\theta}(x,\ell(Q)).
\]
There is $\theta\gec 1$ so that for all $\delta>0$, we have
\begin{equation}
\label{e:manysegments}
\sum_{Q\subseteq R\atop \eta_{X}^{\theta}(Q)\geq \delta}|Q|\lec_{\delta} |R| \;\;\; \mbox{ for all }R\in \cD.
\end{equation}
\end{lemma}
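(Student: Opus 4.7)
The plan is to prove Lemma~\ref{l:d-lines} by an inductive ``one line at a time'' argument, reducing it to a single-line extraction that is itself a Carleson-type estimate. Fix $R \in \cD$, $\delta > 0$, and $\theta > 0$ to be chosen. For each $Q \subseteq R$ and each $x \in Q$ I would attempt to construct lines $L_1, \ldots, L_d$ through $x$ inductively: pick $L_1$ as any nearly-straight direction in $X$ through $x$ at scale $\ell(Q)$, and having chosen $L_1, \ldots, L_k$ with $U_k = \spn(L_1, \ldots, L_k)$, seek $L_{k+1}$ through $x$ with $\angle(L_{k+1}, U_k) \geq \theta$ and $L_{k+1} \cap B(x, \ell(Q)) \subseteq N_{\delta \ell(Q)}(X)$. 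Success through $k = d-1$ witnesses $\eta_X^\theta(x, \ell(Q)) \leq \delta$, so $\eta_X^\theta(Q) \geq \delta$ forces the extraction to fail at some step $k \in \{0, \ldots, d-1\}$. The task therefore reduces to showing that, for any $k \leq d-1$ and any Borel assignment $Q \mapsto V_Q \in \cA(n,k)$ with $x_Q \in V_Q$, the set of cubes $Q \subseteq R$ where \emph{no} line $L$ through $x_Q$ satisfies both $\angle(L, V_Q) \geq \theta$ and $L \cap B_Q \subseteq N_{\delta \ell(Q)}(X)$ is a Carleson family with constant $\lec_\delta 1$; a union bound over $k$ then yields \eqref{e:manysegments}.

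The single-line extraction is the main obstacle and where the Poincar\'e inequality is essential. Given $x, r$, and a $k$-plane $V$ through $x$ with $k \leq d-1$, I split into cases. If $X \cap B(x,r) \subseteq N_{\sigma r}(V)$ with $\sigma$ small, a covering of this neighborhood by $\lec \sigma^{-k}$ balls of radius $\sigma r$ and Ahlfors $d$-regularity give
\[
\mu(B(x,r)) \leq A \sigma^{d-k} r^d,
\]
contradicting the Ahlfors lower bound $\mu(B(x,r)) \geq r^d/A$ once $\sigma < A^{-2/(d-k)}$; hence this ``flat alternative'' is ruled out for a choice of $\sigma$ depending only on $A, d$. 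In the other case I may select $y \in X \cap B(x, r)$ with $\dist(y, V) \geq \sigma r$, and Lemma~\ref{l:short-curves} produces a $d$-modulus-positive family $\Gamma$ of curves in $X$ from a neighborhood of $x$ to a neighborhood of $y$, each of length $\lec r$. It then remains to find $\gamma \in \Gamma$ that is nearly length-minimizing, $\ell(\gamma) \leq (1+\delta)|x-y|$, because such a $\gamma$ lies within $C\sqrt{\delta}\,r$ of the straight segment between its endpoints; the line $L$ containing that segment satisfies $\angle(L, V) \gec \sigma \gec 1$ by the choice of $y$, and the one-sided closeness $L \cap B(x, r) \subseteq N_{C\sqrt{\delta}\,r}(X)$ holds automatically.

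The Carleson packing of cubes at which no nearly length-minimizing curve exists is then controlled by a Dorronsoro-type $L^2$ energy argument, which I expect to be the most delicate piece of the proof. For a $1$-Lipschitz parametrization $\gamma: [0, L_\gamma] \to \R^n$, Dorronsoro's theorem gives
\[
\int_0^{L_\gamma} \int_0^{L_\gamma} \Omega_\gamma(t, s)^2 \, \frac{ds}{s}\, dt \lec L_\gamma,
\]
where $\Omega_\gamma(t, s)$ measures the $L^2$-normalized deviation of $\gamma$ from an affine function on $[t-s, t+s]$; closeness to affine on an interval of length $\sim r$ propagates to near length-minimization on that interval. Integrating this curve-level error against an admissible density on $\Gamma$ supplied by Lemma~\ref{l:short-curves} and pushing the bound down to the $d$-regular measure $\mu$ on $X$ should give the required Carleson packing with constant $\lec_\delta 1$. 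The hard part is precisely this translation from curve-level Dorronsoro energy to cube-level Carleson bounds via modulus averaging, which is reminiscent of the author's allusion in the introduction to Bate's technique for producing Alberti representations.
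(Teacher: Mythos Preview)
Your inductive reduction to a ``one transversal line at a time'' problem is the same as the paper's, but there are two genuine gaps in the single-line step. First, ``nearly length-minimizing'' is the wrong target: the Loewner property only supplies curves of length $\lec r$, not $\leq (1+\delta)|x-y|$, and there is no reason a Carleson packing should control the cubes where no almost-geodesic exists. More importantly, your Dorronsoro argument controls, for a fixed curve $\gamma$ and most $(t,s)$, the deviation of $\gamma$ from \emph{some} affine map on $[t-s,t+s]$, but it says nothing about the \emph{direction} of that affine map: it could be parallel to $V$, in which case the nearby line segment is useless for bounding $\eta_X^{V,\theta}$. The choice of $y$ far from $V$ pins down only the global chord direction, not the direction of the scale-$r$ affine approximants.

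The paper repairs this as follows. For each curve $\gamma$ in the modulus family, project onto $V^\perp$; since $\gamma$ runs from near $x_B$ to near a point far from $V$, $|\pi_{V^\perp}(\gamma)|\gec r_B$, so after discarding the (small) family of intervals $I$ with $\diam(\pi_{V^\perp}(\gamma(I)))<\alpha|I|$ and the Chebyshev-bad intervals where the Dorronsoro sum is large, the remaining set $F_\gamma\subseteq\gamma$ has $\cH^1(F_\gamma)\gec r_B$, and on $F_\gamma$ the affine approximant $A_I$ at every scale satisfies both $|A_I-\gamma|\lec\Omega(3I)|I|$ and $\angle(A_I(\R),V)\gec\alpha$. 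This forces $\int_0^{r_B}\eta_X^{V,\theta}(x,r)^2\,dr/r\lec 1$ on $F_\gamma$, hence the weight $\rho(x)=r_B^{-1}\exp(-\int_0^{r_B}\eta^2\,dr/r)\one_{CB}$ is (up to a constant) admissible for the curve family; the modulus lower bound then gives $\int\rho^d\gec 1$, which forces a large subset of $B\cap X$ on which the $\eta$-integral is bounded. A separate John--Nirenberg/stopping-time argument upgrades this to a Carleson packing. One more simplification you should adopt: rather than allowing $V_Q$ to vary with $Q$, the paper takes a finite $\frac{\theta}{2}$-net $\cV\subset G(n,d-1)$ and proves the Carleson estimate for each \emph{fixed} $V\in\cV$; the inductive step then replaces the data-dependent span $U_k$ by a nearby $V_k\in\cV$, at the cost of halving $\theta$.
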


We will say an Ahlfors $d$-regular set satisfies the {\it Many Segments Property} (MS) if it satisfies \eqref{e:manysegments} for any $\delta>0$. We do not know if this property is sufficient for UR, but as mentioned earlier, it does when coupled with the WGL, as we'll show now:

\begin{lemma}
If $X,\mu$ and $\theta$ be as in Lemma \ref{l:d-lines}, then for all $\delta>0$ there are $M>1$ and $\ve>0$ so that if $B$ is a ball centered on $X$ with $r_{B}<M^{-1}\diam X$, $\beta_{X}(MB)<\ve$, and $\sup_{x\in B\cap X} \eta_{X}^{\theta}(x,Mr_{B})<\ve$, then $b\beta_{X}(B)<\delta$. 
\end{lemma}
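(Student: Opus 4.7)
The plan is to argue by contradiction and blow-up, exploiting rigidity of the limit space in the spirit of Lemma~\ref{l:flat-ball}. If the conclusion fails for some $\delta_0 > 0$, I can produce sequences $M_j \to \infty$ and $\ve_j \to 0$ with $\ve_j M_j \to 0$ (e.g.\ $M_j = j$, $\ve_j = j^{-2}$), closed Ahlfors $d$-regular $d$-Loewner sets $X_j \subseteq \R^n$ with the constants of $X$, and balls $B_j$ centered on $X_j$ witnessing the failure. Translating and rescaling so $B_j = \bB$, I have $0 \in X_j$ and $\diam X_j > M_j \to \infty$. Lemma~\ref{l:limlem} then lets me extract a subsequence with $X_j \to X_\infty$ in the Hausdorff metric on bounded sets, where $X_\infty$ is still Ahlfors $d$-regular and $d$-Loewner. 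After a further subsequence the $d$-planes $V_j$ witnessing $\beta_{X_j}(M_j\bB) < \ve_j$ converge to some $V_\infty \in \cA(n,d)$, and because $\ve_j M_j \to 0$ this forces $X_\infty \subseteq V_\infty$ on every bounded region.

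Next I would transfer the many-segments data to the limit. For each $x \in X_\infty \cap \bB$, approximate by $x_j \in X_j \cap \bB$ and extract convergent subsequences of the $d$ lines $L_{j,1},\dots,L_{j,d}$ through $x_j$ granted by $\eta_{X_j}^\theta(x_j,M_j) < \ve_j$. The spread condition $\angle(L_{j,k+1},U_{j,k}) \geq \theta/2$ is closed and therefore survives; since the segment of each line inside $M_j\bB$ lies within $\ve_j M_j \to 0$ of $X_j$ while eventually covering any fixed ball, the limits $L_1(x),\dots,L_d(x)$ are full lines through $x$ contained entirely in $X_\infty$, with linearly independent directions $v_1,\dots,v_d$ spanning $V_\infty'$ (the linear subspace parallel to $V_\infty$).

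The crux is then to show $X_\infty = V_\infty$. Suppose instead $z \in V_\infty \setminus X_\infty$; pick $x_\ast \in X_\infty$ minimizing $|x-z|$ (possible since $X_\infty$ is closed and nonempty) and set $\rho = |x_\ast - z| > 0$. Then $B(z,\rho) \cap X_\infty = \emptyset$ with $x_\ast \in \partial B(z,\rho)$. Each line $L_i = x_\ast + \R v_i$ lies in $X_\infty$, hence avoids $B(z,\rho)$, so $|x_\ast + tv_i - z|^2 \geq \rho^2$ for every $t \in \R$. Expanding yields $t^2 + 2t(x_\ast - z)\cdot v_i \geq 0$ for all $t \in \R$, which forces $(x_\ast - z)\cdot v_i = 0$. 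All $v_i$ therefore lie in the $(d-1)$-dimensional subspace $(x_\ast - z)^\perp \cap V_\infty'$, contradicting their linear independence in $V_\infty'$.

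Having established $X_\infty = V_\infty$, I can plug $V_\infty$ into the definition of $b\beta_{X_j}(\bB)$: both suprema tend to $0$ by Hausdorff convergence of $X_j$ to $V_\infty$ on $2\bB$, so $b\beta_{X_j}(\bB) \to 0$, contradicting $b\beta_{X_j}(\bB) \geq \delta_0$. The main obstacle I anticipate is the rigorous passage of the many-segments data to the limit: it requires arranging $\ve_j M_j \to 0$ so the Hausdorff error decays over the \emph{full} length $M_j$ of the lines (a freedom the negated statement allows), together with a careful simultaneous extraction of convergent subsequences of $d$-tuples of directions preserving the $\theta$-spread condition.
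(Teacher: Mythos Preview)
Your argument is correct and mirrors the paper's almost exactly: both negate the statement, blow up with $M_j=j$ and $\ve_j=j^{-2}$, pass to a Hausdorff limit contained in a $d$-plane, transfer the $d$ independent lines to the limit, and derive a contradiction via a nearest-point argument (your computation $(x_\ast-z)\cdot v_i=0$ is precisely the paper's observation that some $L_i$ must cross the tangent hyperplane to $B(z,\rho)$ at $x_\ast$). The only inessential difference is that you invoke Lemma~\ref{l:limlem} to carry the Loewner property to the limit, whereas the paper---and your own argument, on inspection---needs only the Ahlfors regularity of the limit supplied by Lemma~\ref{l:ahlforshaus}.
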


\begin{proof}
Let $\theta$ be as in Lemma \ref{l:d-lines} and $\delta>0$. Without loss of generality, we can assume $B=\bB$. Suppose there is $\delta>0$ so that instead that for all $j$ we could find Ahlfors $d$-regular sets $X_j\subseteq \R^{n}$ (with the same constants) containing $0$ so that $\diam X_j\geq j$, $\beta_{X}(j\bB)<\frac{1}{j^2}$ and $\sup_{x\in \bB\cap X_j} \eta_{X}^{\theta}(x,j)<\frac{1}{j^2}$, but $b\beta_{X_{j}}(\bB)\geq \delta$. We can pass to a subsequence so that this converges in the Hausdorff metric to an Ahlfors $d$-regular set $X$ containing $0$ and with the property that for all $x\in X$ there are $d$ lines $L_{1}(x),...,L_{d}(x)\subseteq X$ containing $x$ so that the angles between $L_{k+1}(x)$ and the span of $L_{1}(x),...,L_{k}(x)$ are at least $\theta/2>0$, and so that $\beta_{X}(r\bB)=0$ for all $r>0$. In particular, $X$ is contained in a $d$-dimensional plane, which we can assume without loss of generality to be $\R^{d}$. Moreover, $b\beta_{X}(\bB)\geq \delta$. Since $X\subseteq \R^{d}$, this implies there is $z\in \bB\cap \R^{d}$ with $\dist(z,X)\geq \delta$. Let $\delta'=\dist(z,X)$ and $x\in \d B(z,\delta' )\cap X$. If $V$ is the $(d-1)$-dimensional plane in $\R^{d}$ tangent to $B(z,\delta')\cap \R^{d}$ at $x$, then there is at least one $i$ so that $L_{i}(x)$ is not parallel with $V$, so in particular, 
\[
\emptyset \neq L_{i}(x)\cap  B(z,\delta')^{\circ}
\subseteq L_{i}(x)\cap \R^{d}\backslash X\]
(where we are taking the interior with respect to $\R^{d}$), whereas we know $L_{i}(x)\subseteq X$, so we get a contradiction. 
\end{proof}

\begin{corollary}
For $\theta$ is as in Lemma \ref{l:d-lines}, and for all $\delta>0$, there is $N\in\mathbb{N}$ and $\ve>0$ so that if $Q\in \cD$, $Q^{N}$ is the $N$-th generation ancestor of $Q$, $\beta_{X}(B_{Q^{N}})<\ve$, and $\eta_{X}^{\theta}(Q^{N})<\ve$, then $b\beta_{X}(B_{Q})<\delta$. 
\end{corollary}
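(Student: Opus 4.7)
The plan is to apply the previous lemma directly with $B = B_Q$, translating its hypotheses from the scale $M r_{B_Q} = M\ell(Q)$ to the ancestor scale $r_{B_{Q^N}} = \ell(Q^N) = \rho^{-N}\ell(Q)$. Given $\delta$, let $M > 1$ and $\ve_L > 0$ be the constants produced by that lemma; I will take $N$ large enough that $\rho^{-N} \geq C M$ for a suitable absolute constant $C$, and $\ve := c\, \ve_L\, M\, \rho^{N}$ for a small absolute constant $c$.

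For the first hypothesis $\beta_X(MB_Q) < \ve_L$, observe that $\zeta_Q \in Q \subseteq B_{Q^N}$ together with $M\ell(Q) \ll \ell(Q^N)$ forces $MB_Q$ to sit in a bounded dilate of $B_{Q^N}$. The monotonicity \eqref{e:betamonotone} then yields
\[
\beta_X(MB_Q) \lec \frac{\rho^{-N}}{M}\, \beta_X(B_{Q^N}) < \frac{\rho^{-N}}{M}\, \ve < \ve_L
\]
by the choice of $\ve$. For the second hypothesis, $\eta_X^{\theta}$ enjoys an entirely analogous scaling: using the same $d$ lines that approximately realize the infimum at scale $\ell(Q^N)$, one gets $\eta_X^{\theta}(x, M\ell(Q)) \leq (\rho^{-N}/M)\, \eta_X^{\theta}(x, \ell(Q^N))$, and since any $x \in B_Q \cap X$ lies in (a bounded dilate of) $Q^N$, this is dominated by $(\rho^{-N}/M)\, \eta_X^{\theta}(Q^N) < \ve_L$. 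The remaining scale condition $r_{B_Q} < M^{-1}\diam X$ is automatic from $\ell(Q) = \rho^{N} \ell(Q^N) \lec \rho^{N} \diam X$, which is smaller than $\diam X / M$ once $\rho^{-N} \geq CM$ with $C$ large. The previous lemma then delivers $b\beta_X(B_Q) < \delta$.

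The main technical nuisance I anticipate is the geometric point that $\zeta_Q$ may lie near the boundary of $B_{Q^N}$, so $MB_Q$ is strictly contained only in $(1 + M\rho^{N})\, B_{Q^N} \subseteq 2 B_{Q^N}$, not in $B_{Q^N}$ itself. I would absorb this slack either by carrying a harmless fixed dilation factor in front of $B_{Q^N}$ throughout the monotonicity step, or, equivalently, by passing to an ancestor $Q^{N+k}$ a bounded number of generations higher whose ball does contain $MB_Q$. The same bounded enlargement handles the twin issue for the $\eta$ estimate when a point of $B_Q \cap X$ fails to lie literally inside $Q^N$. None of this disturbs the dependencies $N = N(M, \delta, \rho)$ and $\ve = \ve(\ve_L, M, N, \rho)$, which are exactly those asserted in the corollary.
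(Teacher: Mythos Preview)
Your proposal is correct and follows exactly the route the paper intends: the paper's own ``proof'' is a one-line remark that the corollary follows from the previous lemma together with the monotonicity $\beta_{X}(MB_Q)\lec_{N}\beta_{X}(B_{Q^N})$ and $\eta_{X}^{\theta}(x,M\ell(Q))\lec_{N}\eta_{X}^{\theta}(Q^N)$, and you have supplied precisely these details (indeed more carefully than the paper, which appears to write the inequalities in the wrong direction). Your closing paragraph correctly identifies and dispatches the only genuine nuisance, namely that $MB_Q$ and $B_Q\cap X$ sit only in a bounded dilate of $B_{Q^N}$ rather than in $B_{Q^N}$ itself; absorbing this by a fixed enlargement or a few extra generations is exactly the right fix.
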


This follows from the previous lemma and the fact that $\beta_{X}(B_{Q^N})\lec_{N} \beta_{X}(B_{Q})$ and  $\eta_{X}^{\theta}(B_{Q^N})\lec_{N} \eta_{X}^{\theta}(B_{Q})$, we leave the details to the reader. 

We now finish the proof of the Main Theorem using the above results. Observe that the map sending $Q\rightarrow Q^{N}$ is at most $C(N)$-to-$1$, and so by the previous Corollary, 
\begin{align*}
\sum_{Q\subseteq R\atop b\beta_{X}(B_{Q})\geq \delta}|Q|
& \leq\sum_{Q\subseteq R\atop \beta_{X}(B_{Q^{N}})\geq \ve}|Q|
+\sum_{Q\subseteq R\atop \eta_{X}^{\theta}(Q^{N})\geq \ve}|Q|
 \lec \sum_{Q\subseteq R^{N}\atop \beta_{X}(B_{Q})\geq \ve } |Q|
+\sum_{Q\subseteq R^{N}\atop \eta_{X}^{\theta}(B_{Q})\geq \ve } |Q|
\\
& \lec |R^{N}|\lec |R|.
\end{align*}

Thus, by the BWGL (Lemma \ref{l:BWGL}), $X$ is UR.\\

This finishes the proof of the Main Theorem  assuming the WGL and MS properties (i.e. Lemmas \ref{l:wgl} and \ref{l:d-lines}). The remainder of the paper focuses on proving these two results, each of which will be the focus of the next two sections respectively.

\section{Proof of MS}

In this section we focus on proving Lemma \ref{l:d-lines}. To show that most points in $X$ have $d$-many segments pointing in a linearly independent set of directions lying close to $X$, we will reduce this to showing that, through most  $x\in X$ and any $(d-1)$-dimensional plane $V$ passing through $x$, we can fine just one segment transversal to $V$ that lies close to $X$. We will then use this repeatedly to build up a set of $d$-many independent segments. We show explicitly the reduction in the following subsection. 

\subsection{$\eta$-numbers}
For $x\in X$, $r>0$, $V\in \cA(n,k)$ with $1\leq k\leq n-1$, and $\theta>0$, define
\[
\eta_{X}^{V,\theta}(x,r) = \inf_{L}\sup_{y\in B(x,r)\cap L}\frac{\dist(y,X)}{r}
\]
where the infimum is over all lines $L$ passing through $x$ so that if $e_{L}$ is the vector parallel to $L$, and $V'\in \cG(n,k)$ is parallel to $V$, then 
\[
\angle(L,V)=\dist(e_{L},V')\geq \theta.
\]

Note that the quantity is unchanged if we instead require our planes to be in $\cG(n,k)$ for some $k$, since if $V'$ is the plane through the origin parallel to $V$, then $\eta_{X}^{V,\theta}=\eta_{X}^{V',\theta}$, that is, the definition only compares angles between planes and not their position. We prefer to allow for affine planes as a matter of convenience below.

We record a few basic properties of the $\eta$-numbers. Firstly, since all lines pass through $x$ in this definition, we immediately have 
\[
0\leq \eta_{X}^{V,\theta}(x,r)\leq 1.
\]

\begin{lemma}
Let $x\in X$, $V\in \cA(n,k)$ with $1\leq k<n$, and $\theta>0$. Then 
\begin{equation}
\label{e:eta-monotone}
\eta_{X}^{V,\theta}(x,r)\leq \frac{s}{r} \eta_{X}^{V,\theta}(x,s) \;\; \mbox{ for all } 0<r\leq s.
\end{equation}
If $x,y\in X$, then
\begin{equation}
\label{e:eta-lip}
\eta_{X}^{V,\theta}(x,r)\leq \eta_{X}^{V,\theta}(y,r)+\frac{|x-y|}{r}.
\end{equation}
Finally,
\begin{equation}
\label{e:theta-monotone}
\eta_{X}^{V,\theta}(x,r)\leq \eta_{X}^{V,\theta'}(x,r) \;\; \mbox{ for }\;\; \theta \leq \theta'.
\end{equation}
\end{lemma}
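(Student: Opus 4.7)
The plan is to handle the three inequalities separately; all three follow directly from inspecting the definition of $\eta_X^{V,\theta}$, and none of the three requires any analytic input from Ahlfors regularity, the Loewner property, or the Poincar\'e inequality. The main subtlety will be in \eqref{e:eta-lip}, where one must translate a line through $y$ into a line through $x$ while preserving the angle condition; the other two are essentially unwinding definitions.

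For \eqref{e:eta-monotone}, I would start by fixing any line $L$ through $x$ with $\angle(L,V)\ge\theta$ and observe that $L\cap B(x,r)\subseteq L\cap B(x,s)$ whenever $r\le s$. Hence
\[
\sup_{y\in L\cap B(x,r)}\dist(y,X) \;\le\; \sup_{y\in L\cap B(x,s)}\dist(y,X),
\]
and dividing by $r$ and rewriting the right-hand side as $(s/r)\cdot s^{-1}\sup_{y\in L\cap B(x,s)}\dist(y,X)$ shows that the $L$-indexed quantity defining $\eta_X^{V,\theta}(x,r)$ is bounded by $(s/r)$ times the corresponding quantity at scale $s$. Taking the infimum over admissible $L$ on both sides yields the desired inequality. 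For \eqref{e:theta-monotone}, the inequality is even more immediate: if $\theta\le\theta'$, then the set of admissible lines at angle threshold $\theta'$ is contained in the set at threshold $\theta$, and the infimum over a smaller class is at least the infimum over the larger class, so $\eta_X^{V,\theta}(x,r)\le \eta_X^{V,\theta'}(x,r)$.

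The content lies in \eqref{e:eta-lip}. Given a line $L$ through $y$ with $\angle(L,V)\ge\theta$, I would let $L'$ be the unique line through $x$ parallel to $L$; since the angle $\angle(\cdot,V)$ depends only on the direction, $L'$ is admissible for computing $\eta_X^{V,\theta}(x,r)$. Writing points of $L'\cap B(x,r)$ as $x+te_L$ with $|t|\le r$ and comparing to $z:=y+te_L\in L\cap B(y,r)$, the triangle inequality gives $\dist(x+te_L,X)\le |x-y|+\dist(z,X)$. Taking the supremum over $|t|\le r$ and dividing by $r$, then taking the infimum over all admissible $L$ through $y$ (and correspondingly over the parallel line $L'$ through $x$), yields
\[
\eta_X^{V,\theta}(x,r)\;\le\;\frac{|x-y|}{r}+\eta_X^{V,\theta}(y,r),
\]
which is \eqref{e:eta-lip}. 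The main (minor) obstacle is simply to verify that the parallel translation preserves the angle condition and that the supremum passes correctly from points of $L'$ to points of $L$; once the bijection $x+te_L\leftrightarrow y+te_L$ is set up this is automatic.
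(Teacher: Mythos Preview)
Your proposal is correct and follows essentially the same approach as the paper: for \eqref{e:eta-monotone} both use $B(x,r)\subseteq B(x,s)$ and infimize over admissible lines, for \eqref{e:eta-lip} both translate an admissible line between $x$ and $y$ (noting the angle depends only on direction) and apply the triangle inequality pointwise, and for \eqref{e:theta-monotone} both observe that the admissible lines at threshold $\theta'$ are a subclass of those at threshold $\theta$. If anything, your version of \eqref{e:eta-lip} is cleaner, since you start with a line through $y$ and directly obtain the inequality as stated, whereas the paper starts with the minimizing line through $x$ and so literally proves the inequality with $x$ and $y$ interchanged (which of course is equivalent by symmetry).
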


\begin{proof}
For \eqref{e:eta-monotone}, let $L$ be any line passing through $x$. Then
\[
r\eta_{X}^{V,\theta}(x,r)
\leq \sup_{y\in B(x,r)\cap L}   \dist(y,X) 
 \leq \sup_{y\in B(x,s)\cap L}   \dist(y,X) \]
 and infimizing over all $L$, we obtain $r\eta_{X}^{V,\theta}(x,r)<s\eta_{X}^{V,\theta}(x,s)$. \\
 
 For \eqref{e:eta-lip}, let $L$ be the line that infimizes $\eta_{X}^{V,\theta}(x,r)$. Let $L'=L+y-x$. Then $L'$ passes through $y$ and also has angle at least $\theta$ with $V$. If $z'\in L'\cap B(y,r)$, then $z:=z'-y+x\in B(x,r)\cap L$, and so there is $z''\in X$ with $|z-z''|\leq \eta_{X}^{V,\theta}(x,r)r$. Thus,
 \[
 \dist(z',X)\leq |z'-z''|\leq  |z'-z|+|z-z''|
 \leq |x-y|+\eta_{X}^{V,\theta}(x,r)r.\]
 Dividing both sides by $r$ and taking the supremum over all $z'\in B(y,r)\cap L'$ gives \eqref{e:eta-lip}.
 
 Finally, to prove \eqref{e:theta-monotone}, let $L$ be a plane infimizing $\eta_{X}^{V,\theta'}(x,r)$, then it has angle at least $\theta'$ from $V$, and since $\theta'\geq \theta$, it also has angle at least $\theta$ from $V$, and so 
 \[
 \eta_{X}^{V,\theta}(x,r)\leq \sup_{y\in B(x,r)\cap L}\frac{\dist(y,X)}{r}
 = \eta_{X}^{V,\theta'}(x,r),
 \]
 which proves \eqref{e:theta-monotone}.

\end{proof}

\subsection{Finding transversal segments}

The main objective of this section is the following lemma.

\begin{lemma}
\label{l:eta-carleson}
Let $V\in \cA(n,d-1)$. For $Q\in \cD$, let
\[
\eta_{X}^{V,\theta}(Q) =  \sup_{x\in Q}\eta_{X}^{V,\theta}(x,\ell(Q)).
\]
There is $\theta\gec 1$ so that for all $\delta>0$, 
\begin{equation}
\label{e:nxvtheta-carleson}
\sum_{Q\subseteq R\atop \eta_{X}^{V,\theta}(Q)\geq \delta}|Q|\lec_{\delta} |R| \;\;\; \mbox{ for all }R\in \cD.
\end{equation}
\end{lemma}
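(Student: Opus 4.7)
The plan is to prove the Carleson packing estimate \eqref{e:nxvtheta-carleson} by first establishing, via compactness, a ``one good scale'' statement, and then converting it into the packing bound by a stopping-time argument using the quasi-Lipschitz properties \eqref{e:eta-monotone}--\eqref{e:eta-lip}. By translation invariance of the quantity $\eta_{X}^{V,\theta}$ under shifts of $V$, I may assume throughout that $V \in \cG(n,d-1)$ passes through the origin.

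First I would fix a universal $\theta \in (0,1)$, say $\theta = 1/2$, and show that for every $\ve > 0$ there exists $\eta_{0} = \eta_{0}(\ve) > 0$ such that for every $V \in \cG(n,d-1)$, every $x \in X$, and every $0 < r < \diam X$, one can find $x' \in B(x, r/2) \cap X$ and $r' \in [\eta_{0} r, r]$ with $\eta_{X}^{V,\theta}(x', r') < \ve$. This is a blow-up/compactness argument in the spirit of Lemma \ref{l:flat-ball}: if the claim fails, a rescaled sequence of counterexamples $(X_{j}, V_{j}, x_{j}, r_{j})$ with $x_{j} = 0$, $r_{j} = 1$ produces, via Lemma \ref{l:limlem} and compactness of the Grassmannian, a Hausdorff limit $(X_{\infty}, V_{\infty})$ where $X_{\infty}$ is Ahlfors $d$-regular and $d$-Loewner. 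By Theorem \ref{t:rectifiable}, $X_{\infty}$ is $d$-rectifiable, and at $\mu$-a.e. $x^{\ast} \in X_{\infty} \cap B(0, 1/2)$ it admits a $d$-dimensional approximate tangent plane $P$. Since $\dim P = d > d-1 = \dim V_{\infty}$, the dimension inequality $\dim(P \cap V_{\infty}^{\perp}) \geq d + (n-d+1) - n = 1$ yields a unit vector $e \in P \cap V_{\infty}^{\perp}$, and the line $L = x^{\ast} + \R e$ satisfies $\angle(L, V_{\infty}) = 1 > \theta$. For Ahlfors-regular rectifiable sets approximate tangency upgrades to a genuine bilateral Hausdorff approximation near $x^{\ast}$, forcing $\eta_{X_{\infty}}^{V_{\infty}, \theta}(x^{\ast}, r') < \ve$ for some small $r'$; this transfers back via the Hausdorff convergence to contradict the uniform lower bound in the counterexample sequence.

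Second, I would convert this into the Carleson packing by applying the one-good-scale statement with $\ve \ll \delta$ (say $\ve = \delta^{2}$). For each cube $Q$ one obtains a good pair $(x'_{Q}, r'_{Q})$ nearby with $\eta_{X}^{V,\theta}(x'_{Q}, r'_{Q}) < \ve$. Estimate \eqref{e:eta-monotone} then gives $\eta_{X}^{V,\theta}(x'_{Q}, r) \leq \ve r'_{Q}/r \leq \delta/2$ whenever $r \geq 2\ve r'_{Q}/\delta$, and combining with \eqref{e:eta-lip} yields $\eta_{X}^{V,\theta}(y, r) \leq \delta$ on a ``good tent''
\[
T_{Q} = \bigl\{(y, r) : |y - x'_{Q}| \leq \delta r/4,\ r \in [2\ve r'_{Q}/\delta, r'_{Q}]\bigr\}.
\]
The dyadic cubes lying in $T_{Q}$ have total mass at least some positive fraction $c(\delta) |Q|$. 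A standard stopping-time argument then closes the loop: starting from $R$, stop at maximal bad subcubes $\cS_{1}$, iterate below each to obtain $\cS_{2}, \cS_{3}, \ldots$; the good tents carve out a fraction $c(\delta)$ of the mass below each stopping cube, giving $\sum_{Q \in \cS_{k+1},\, Q \subseteq Q_{0}} |Q| \leq (1 - c(\delta)) |Q_{0}|$ for each $Q_{0} \in \cS_{k}$. Summing the resulting geometric series over $k$ yields \eqref{e:nxvtheta-carleson}.

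\textbf{Main obstacle.} The principal difficulty is the compactness step: converting the abstract $d$-rectifiability of the limit $X_{\infty}$ (from Theorem \ref{t:rectifiable}) into a quantitative pointwise approximation of $X_{\infty}$ by a line transverse to $V_{\infty}$ near a regular point. This requires the (well-known but delicate) fact that Ahlfors regular $d$-rectifiable sets have bilateral Hausdorff-approximation at small scales rather than merely measure-theoretic tangents, and one must verify continuity of $\eta_{X}^{V,\theta}$ under the joint Hausdorff convergence $(X_{j}, V_{j}) \to (X_{\infty}, V_{\infty})$. The stopping-time step is routine, but keeping track of how the constants $\eta_{0}$, $\ve$, and $\delta$ interact requires care.
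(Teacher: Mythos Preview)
Your Step~2 contains a genuine gap: the ``one good scale'' conclusion of Step~1 is too weak to drive the stopping-time argument. The tent $T_{Q_0}$ only certifies that cubes containing $x'_{Q_0}$ at scales in $[2\ve r'_{Q_0}/\delta,\,r'_{Q_0}]$ are good; it says nothing about scales in $(r'_{Q_0},\ell(Q_0))$ nor below $2\ve r'_{Q_0}/\delta$, so the maximal bad subcubes of $Q_0$ can still cover all of $Q_0$, and the inequality $\sum_{Q\in\cS_{k+1},\,Q\subseteq Q_0}|Q|\le(1-c(\delta))|Q_0|$ need not hold. In fact the only properties of $\eta$ you invoke---\eqref{e:eta-monotone}, \eqref{e:eta-lip}, and one-good-scale with some $\eta_0(\ve)>0$---are by themselves insufficient for Carleson packing: the abstract choice $\eta(x,2^{-k})=\ve\cdot 2^{(k\bmod M)}$ with $M\sim 2\log_2(1/\delta)$ satisfies \eqref{e:eta-monotone}, trivially satisfies \eqref{e:eta-lip} (being constant in $x$), and satisfies your one-good-scale hypothesis with $\eta_0=2^{-M}$, yet has $\eta\ge\delta$ on a positive fraction of all scales at every point, so $\sum_{Q\subseteq R,\ \eta(Q)\ge\delta}|Q|=\infty$. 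No argument using only those three ingredients can succeed, and compactness gives you no quantitative rate on $\eta_0(\ve)$ to rule this out.

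The paper's proof obtains a genuinely stronger intermediate estimate that compactness cannot provide. Rather than blow up, it exploits the Loewner condition directly: Lemma~\ref{l:short-curves} produces a large family of curves in $X$ joining a continuum near $V+x_B$ to one far from it, and applying Dorronsoro's theorem to the arclength parametrisation of each curve shows (Lemma~\ref{l:generalV}) that on a subset $E_B\subseteq CB$ with $|E_B|\gtrsim|B|$ one has the square-function bound
\[
\int_0^{r_B}\eta_X^{V,\theta}(x,r)^2\,\frac{dr}{r}\lesssim 1\qquad\text{for all }x\in E_B.
\]
This controls \emph{all} scales simultaneously on a set of definite density, which is exactly what the David--Semmes criterion \cite[Lemma~IV.1.12]{of-and-on} (together with the porosity Lemma~\ref{l:porous}) needs to yield \eqref{e:nxvtheta-carleson}. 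A single good scale per ball cannot substitute for this multiscale control.
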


Let us first show how this lemma implies Lemma \ref{l:d-lines}:

\begin{proof}[Proof of Lemma \ref{l:d-lines}]
Let $\cV$ be a maximally $\frac{\theta}{2}$-separated set in $G(n,d-1)$ with respect to the distance
\[
d(V,U):= \angle(V,U).
\]
We will use the following property repeatedly below: for each $U\in \cG(n,k)$ and $k\leq d-1$, we can contain $U$ in a plane $U'\in \cG(n,d-1)$ and thus we can always find $V\in \cV$ so that 
\[
\angle(U,V)\leq \angle(U',V)<\frac{\theta}{2}.
\]

Let
\[
\cB_{\delta}^{V} = \{Q\in \cD: \eta_{X}^{V,\theta}(Q)\geq \delta\}, \;\; \cB_{\delta} =\bigcup_{V\in \cV} \cB_{\delta}^{V},\;\; \cG = \cD\backslash \cB_{\delta}. 
\]
Let $x\in Q\in \cG$ and pick $V_0\in \cV$. Without loss of generality, we will assume $x=0$. Then $\eta_{X}^{V_{0},\theta}(Q)<\delta$, and so there is a line $L_{1}(x)$ passing through $x$ so that 
\[
\sup_{y\in L_{1}(x)\cap B(x,\ell(Q))}\frac{\dist(y,X)}{\ell(Q)}<\delta. 
\]
By definition of $\cV$, there is $V_1\in \cV$ so that $\angle(L_{1}(x),V_{1})<\frac{\theta}{2}$. Then $Q\in \cG$ implies $\eta_{X}^{V_{1},\theta}(Q)<\delta$, and so there is a line $L_{2}(x)$ passing through $x$ so that $\angle(L_{2}(x),V_{1})\geq \theta$, hence
\[
\angle(L_{2}(x),L_{1}(x))
 \stackrel{\eqref{e:angle-triangle}}{\geq}  \angle(L_{2}(x),V_{1})-\angle(L_{1}(x),V_{1}) \geq \frac{\theta}{2}.
\]
and
\[
\sup_{y\in L_{2}(x)\cap B(x,\ell(Q))}\frac{\dist(y,X)}{\ell(Q)}<\delta. 
\]
Inductively, for $2\leq k\leq d-1$, if $U_{k}$ is the plane spanned by the lines $L_{1}(x),\cdots , L_{k}(x)$, then  we can find $V_{k}\in \cV$ so that  $\angle(U_{k},V_{k})<\frac{\theta}{2}$. Then $Q\in \cG$ implies implies $\eta_{X}^{V_{k},\theta}(Q)<\delta$, and so there is a line $L_{k+1}(x)$ passing through $x$ so that $\angle(L_{k+1}(x),V_{k})\geq \theta$, and so
\begin{equation}
\label{e:angleLU}
\angle(L_{k+1}(x),U_{k})\geq \angle(L_{k+1}(x),V_{k})-\angle(U_{k},V_{k}) \geq \frac{\theta}{2}
\end{equation}
and
\[
\sup_{y\in L_{k+1}(x)\cap B(x,\ell(Q))}\frac{\dist(y,X)}{\ell(Q)}<\delta. 
\]
By induction, we can find lines $L_{1}(x),...,L_{d}(x)$ satisfying \eqref{e:angleLU} for all $k<d$, which implies $\eta_{X}^{\theta}(Q)<\delta$ for all $Q\in \cG$, and 
\[
\sum_{Q\subseteq R\atop \eta_{X}^{\theta}(Q)\geq \delta}|Q|
=\sum_{Q\in \cB_{\delta}\atop Q\subseteq R} |Q|
\leq \sum_{V\in \cV} \sum_{Q\in \cB_{\delta}^{V}\atop Q\subseteq R}|Q|
\stackrel{\eqref{e:nxvtheta-carleson}}{\lec} |R|
\]
where in the last inequality we used Lemma \ref{l:eta-carleson} and the fact that $|\cV|\lec_{\theta,n}1$.

\end{proof}

Thus, it remains to prove Lemma \ref{l:eta-carleson}, which we prove in the next two subsections.

\subsection{Proof of Lemma \ref{l:eta-carleson}: Part I}

We first prove a lemma that says, inside any ball, we can find a large subset, for each point of which we have nice estimates on the $\eta$-numbers.

\begin{lemma}
\label{l:generalV}
Let $C$ be as in Lemma \ref{l:short-curves} and let $c\in (0,1)$. There is $\theta>0$ depending on $c$, the Ahlfors regularity, and the Poincar\'{e} constants, so that the following holds. Let $V\in \cG(n,k)$ for some $1\leq k\leq n-1$. Let $B$ be a ball centered on $X$ with $0<r_{B}<\diam X$ and suppose there is $x_0\in \frac{1}{2} B\cap X$ with $\dist(x_0,V+x_{B})\geq c r_{B}$. Then there is $E_{B}^{V}\subseteq CB\cap X$ so that $|E_{B}^{V}|\gec_{c} |CB|$ and 
\begin{equation}
\label{e:etaintonE}
\int_{0}^{r_{B}} \eta_{X}^{V,\theta}(x,r)^2\frac{dr}{r}\lec_{c} 1 \mbox{ for all }x\in E_{B}.
\end{equation}
\end{lemma}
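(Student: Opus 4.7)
The plan is to use the Loewner property (via Theorem~\ref{t:HK} and Lemma~\ref{l:short-curves}) to produce many short curves in $X$ joining a small neighborhood of $x_B$ to a small neighborhood of $x_0$, and to show that for most starting points along these curves, the curves themselves supply approximately transversal line segments at most scales, yielding the integral estimate via a Dorronsoro-type square function bound.

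Concretely, I would choose $c_1\sim c$ small and set $E:=B(x_B,c_1 r_B)\cap X$ and $F:=B(x_0,c_1 r_B)\cap X$; both are contained in $B$ (since $x_0\in\tfrac12 B$ and $c_1<1/2$) and $\Delta(E,F)\lec_c 1$. Lemma~\ref{l:short-curves} then yields a family $\Gamma:=\Gamma_{C,B}(E,F)$ with $\Mod_d(\Gamma)\gec_c 1$ consisting of curves of length at most $Cr_B$. Each $\gamma\in\Gamma$ satisfies $|\pi_{V^\perp}(\gamma(L_\gamma)-\gamma(0))|\geq cr_B/2$ while $L_\gamma\leq Cr_B$, so the ``average angle'' of $\gamma$ with $V$ is bounded below by some $\theta_0\sim c/C$; this fixes the choice of $\theta:=\theta_0/2$.

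For $x=\gamma(t)$ along a curve $\gamma\in\Gamma$ and a scale $r$ with $\min(t,L_\gamma-t)\geq r$, I would take the bilateral chord $L_{t,r}$ through $\gamma(t-r)$ and $\gamma(t+r)$ as the candidate line for $\eta_X^{V,\theta}(x,r)$. Its transversality with $V$ is measured by $|\pi_{V^\perp}(\gamma(t+r)-\gamma(t-r))|/(2r)$, and its closeness to $X$ by a ``curve $\beta$-number'' measuring how $\gamma|_{[t-r,t+r]}$ deviates from a line. Both deficits are controlled by the $L^2$-deviation of $\pi_{V^\perp}\circ\gamma$ and of $\gamma$ itself from being affine at scale $r$, and summing scale-by-scale via the Dorronsoro-type estimate applied to the $1$-Lipschitz functions $\gamma,\pi_{V^\perp}\circ\gamma:[0,L_\gamma]\to\R^{n}$ yields a bound of the form $\int_0^{L_\gamma}\int_0^{r_B}(\text{chord deficit})^2\frac{dr}{r}\,dt\lec_c L_\gamma\lec r_B$. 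Aggregating over $\gamma\in\Gamma$ with a modulus-weighted Fubini and using $\Mod_d(\Gamma)\gec_c 1$, Chebyshev then extracts the required set $E_B^V\subseteq CB\cap X$ of measure $\gec_c r_B^d\sim|CB|$ on which the pointwise integral $\int_0^{r_B}\eta_X^{V,\theta}(x,r)^2\frac{dr}{r}$ is uniformly bounded.

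The main obstacle is the pointwise $\theta$-threshold in the definition of $\eta_X^{V,\theta}$: at individual scales $r$ the chord $L_{t,r}$ may be nearly parallel to $V$ and so fail to be an admissible competitor, leaving $\eta_X^{V,\theta}(x,r)$ uncontrolled by the chord-deficit alone. The remedy is to pick $\theta$ strictly smaller than the average angle $\theta_0\sim c/C$ from the first step, so that a stopping-time/Chebyshev argument on $\pi_{V^\perp}\circ\gamma$ confines the set of bad scales (where the chord itself has angle $<\theta$ with $V$) to a $dr/r$-measure bounded by the Dorronsoro deficit, contributing at worst an $O(1)$ summand to the final integral. A secondary point is the small-scale behavior: convergence of $\int_0^{r_B}\eta^2\,dr/r$ at $r\to 0$ comes ``for free'' from the square-function decay built into the Dorronsoro estimate for $\pi_{V^\perp}\circ\gamma$, rather than from any additional regularity of $X$ at $x$.
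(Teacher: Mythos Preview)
Your overall architecture---Loewner gives many short curves from near $x_B$ to near $x_0$, and Dorronsoro applied to each curve controls how far $X$ is from the affine pieces of the curve---matches the paper's. Two steps, however, are not yet in workable form.

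\textbf{Passing from curves to a large set in $X$.} The phrase ``modulus-weighted Fubini'' is not a well-defined operation: $p$-modulus is not a measure on $\Gamma$, so there is no canonical way to ``integrate over $\gamma\in\Gamma$''. The mechanism that actually works is the converse one: produce an explicit Borel density on $X$ and show it is admissible for $\Gamma$, so that the lower bound $\Mod_d(\Gamma)\gec_c 1$ becomes an \emph{upper} bound on the set where that density is small. The paper takes
\[
\rho(x)=r_B^{-1}\exp\Bigl(-\int_0^{r_B}\eta_X^{V,\theta}(x,r)^2\,\tfrac{dr}{r}\Bigr)\one_{CB}(x),
\]
proves $\int_\gamma\rho\gec_c 1$ for each $\gamma\in\Gamma$ (this is where the per-curve Dorronsoro work goes), and then reads off $|E_B^V|\gec_c|CB|$ from $\int\rho^d\geq\Mod_d(\Gamma)$ together with $\rho\leq r_B^{-1}\one_{CB}$. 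Your Chebyshev idea can be rescued in the same spirit: once you have a large subset $G_\gamma\subseteq[0,L_\gamma]$ along each curve where the $\eta$-integral is bounded by $N$, the function $(cr_B)^{-1}\one_{E_B^V}$ (with $E_B^V$ the sublevel set) is admissible and the modulus bound gives $|E_B^V|\gec_c r_B^d$. Either way, the step is ``test modulus against an admissible density'', not ``average over curves''.

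\textbf{The $\theta$-threshold.} Your proposed remedy---bounding, at each fixed point $t$, the $dr/r$-measure of scales where the chord has angle $<\theta$ with $V$---does not follow from Dorronsoro and is generally false: a $1$-Lipschitz curve can have, at a fixed $t$, best-affine approximants nearly parallel to $V$ at \emph{every} small scale (so the bad-scale set has infinite $dr/r$-measure there) while $\Omega$ stays small. Dorronsoro controls the \emph{deviation} from the best line, not its \emph{direction}. The paper handles this by removing bad \emph{points} rather than bad scales: with $\pi=\pi_{V^\perp}$, let $\cI_1$ be the (maximal) intervals $I\subseteq[0,L_\gamma]$ with $\diam(\pi\circ\gamma(I))<\alpha|I|$ and let $\cI_2$ be the intervals where the Dorronsoro sum exceeds $A$. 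On the complement $E_\gamma$ of $\bigcup_{\cI_1\cup\cI_2}I$, \emph{at every dyadic scale} the best affine line has angle $\gtrsim\alpha$ with $V$ (because the containing interval is not in $\cI_1$), so $\eta(\gamma(t),r)\lesssim_\alpha\Omega(3I)$ pointwise and the $\cI_2$-bound gives $\int_0^{r_B}\eta^2\,dr/r\lesssim A$ for $t\in E_\gamma$. That $E_\gamma$ is large comes not from Dorronsoro but from a projection/covering argument: a Vitali subcover of $\cI_1$ shows $|\pi\circ\gamma(\bigcup_{\cI_1}I)|\leq 2\alpha$, while $|\pi\circ\gamma([0,L_\gamma])|\gtrsim c$, so for $\alpha\ll c$ and $A$ large the set $E_\gamma$ (indeed its $\pi$-image) has length $\gtrsim c$. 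This is the ``stopping-time'' content your sketch is reaching for, but note it selects good \emph{arc-parameters}, and its size estimate is a total-variation argument on $\pi\circ\gamma$, not a consequence of the Dorronsoro square function.
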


Here we are abusing notation and denoting $|B| = \cH^{d}(B\cap X)$.

%

Some of the ideas for this proof come from \cite{Jon88}, \cite[Section III.4]{Dav91}, and \cite{JKV97}. 

\begin{proof}
Without loss of generality, we can just assume $V\in \cG(n,n-1)$, and this will imply the general case (since we can contain any $V$ in a $(n-1)$-dimensional plane so that the assumptions of the lemma still hold). Let $\theta>0$ to be decided later. For convenience, we will write $\eta=\eta_{X}^{V,\theta}$ below.

Let $A_{1}\subseteq \frac{c}{4} B\cap X$ and $A_{2}\subseteq B(x_{0},\frac{c}{4}r_{B})\cap X$ be two continua of diameter at least $\frac{c}{4} r_{B}$ (which exist since $X$ is connected and $\diam X\geq r_{B}$). 

For technical reasons, it will be more convenient to work with loops rather than curves. For a family of rectifiable curves $\Gamma_0$ contained in $X$ and $\gamma\in \Gamma_0$, let $\dot{\gamma}=\gamma$ on $[0,\ell(\gamma)]$ and $\dot{\gamma}(t) = \gamma(2\ell(\gamma)-t)$ for $t\in [\ell(\gamma),2\ell(\gamma)]$. Then $\dot{\gamma}$ is a curve of length $2\ell(\gamma)$ (although its image is the same as $\gamma$). Let $\dot{\Gamma}_{0} = \{\dot{\gamma}:\gamma\in \Gamma_{0}\}$. Then one can show
\[
\Mod_{d}(\dot{\Gamma}_{0}) = 2^{-d} \Mod_{d}({\Gamma}_{0}) .
\]
Let $\Gamma =  \dot{\Gamma}_{C,B}(A_1,A_2)$. By Lemma \ref{l:short-curves} and the above observations,
\begin{equation}
\label{e:gammabig}
\Mod_{d}(\Gamma)\gec 1.
\end{equation}
Note that since the curves in $\Gamma$ start and end in $\frac{1}{2}B$, they must be contained in $CB$ (otherwise their lengths would be at least $2(C-1)r_{B}$, which exceeds $Cr_{B}$ for $C$ chosen large, and thus their length would be too big). Let
\[
\rho(x) =r_{B}^{-1}\exp\ps{- \int_{0}^{r_{B}} \eta(x,r)^{2}\frac{dr}{r}}\one_{CB}.
\]
{\bf Claim 1:} Some multiple (depending on the constants in the lemma) of $\rho$ is admissible for $\Gamma$, that is,
\begin{equation}
\label{e:padmissible}
\int_{\gamma} \rho\gec_{c} 1 \;\; \mbox{ for all } \;\;\gamma\in \Gamma.
\end{equation}

We now show how to finish the lemma assuming the claim: Let $m>0$ and define
\[
E_{B}^{V}=\{x\in CB: \rho\geq mr_{B}^{-1}\}.
\]
Since $\rho\leq r_{B}^{-1}\one_{CB}$, 
\begin{align*}
1& \stackrel{\eqref{e:gammabig}}{ \lec} \Mod_{d}(\Gamma)\stackrel{\eqref{e:padmissible}}{ \lec}  \int \rho^{d}
\leq \int_{E_{B}^{V}}\rho^{d} + \int_{CB\backslash E_{B}^{V}}\rho^{d}
  \lec |E_{B}^{V}|r_{B}^{-d} +  m^{d}r_{B}^{-d}|CB|\\
& \lec  \frac{|E_{B}^{V}|}{|CB|} + m^{d}
\end{align*}
and so for $m>0$ small enough, we have $\frac{|E_{B}^{V}|}{|CB|}\gec 1$, and $E_{B}^{V}$ satisfies the conclusions of the Lemma, which finishes the proof of the Lemma. \\

Now we focus on proving \eqref{e:padmissible} and Claim 1. Let $\gamma\in \Gamma$ (see Figure \ref{f:gamma} for reference). %
\begin{figure}
\includegraphics[width=200pt]{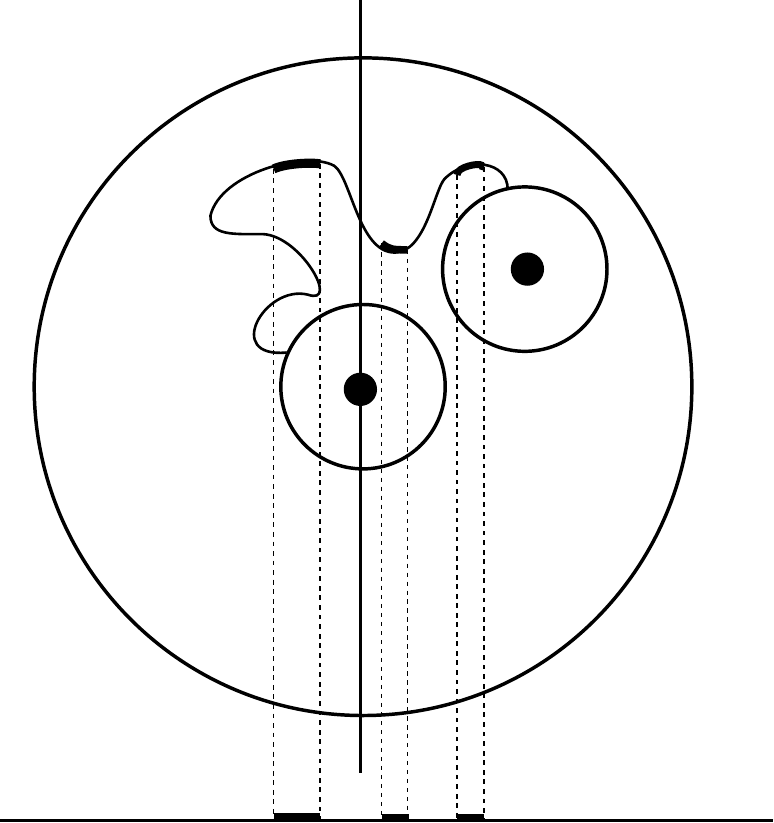}
\begin{picture}(0,0)(200,0)
\put(0,5){$V^{\perp}$}
\put(125,5){$\pi(F_{\gamma})$}
\put(60,175){$F_{\gamma}$}
\put(48,200){$V+x_{B}$}
\put(120,111){$B(x_{0},\frac{cr_{B}}{4})$}
\put(50,100){$\frac{c}{4}B$}
\put(25,60){$B$}
\put(40,150){$\gamma$}
\end{picture}
\caption{The geometric gist of showing that $\int_{\gamma}\rho$ is large is that since $\gamma$ goes between $\frac{c}{4}B$ and $B(x_{0},\frac{cr_{B}}{4})$ and $x_0$ is far from $x_{B}+V$, it must have large projection into $V^{\perp}$, we can then find a large subset $F_{\gamma}$ where the projection is bi-Lipschitz and hence lies in a Lipschitz graph. This implies that the images of the affine maps that best approximate $\gamma$ over this set can't make too steep an angle with $V^{\perp}$, and so must have large angle with $V$. We can use Dorronsoro's theorem to estimate the distance of these affine images to $\gamma$ and hence to $X$, giving us an upper bound on the square integral of $\eta$ on $F_{\gamma}$ and thus a lower bound on $\rho$ on $F_{\gamma}$.}
\label{f:gamma}
\end{figure}
Recall that $\gamma\subseteq CB$ by the definition of $\Gamma$. We just need to show
\begin{equation}
\label{e:rhoQadmissible}
\int_{\gamma} \rho \gec 1.
\end{equation}

Let $\gamma:[0,\ell(\gamma)]\rightarrow \R^{n}$ denote its $1$-Lipschitz arclength parametrization. 

Without loss of generality, we'll scale things so that $[0,\ell(\gamma)]=[0,1]$ (recall that since $\gamma\in \Gamma$, we now have $1=\ell(\gamma)\sim r_{B}$) and translate so that $\gamma(0)=0$. 

We recall the following theorem.

\begin{theorem}[Dorronsoro, \cite{Dor85}, Theorem 6]
Let $1\leq p<p(d)$ where 

\begin{equation}
\label{e:pd}
p(d):= \left\{ \begin{array}{cl} \frac{2d}{d-2} & \mbox{if } d>2 \\
 \infty & \mbox{if } d\leq 2\end{array}\right. . 
 \end{equation}
 For $x\in \bR^{d}$, $r>0$, and $f\in W^{1,2}(\bR^{d})$, define
\[\Omega_{f,p}(x,r)=\inf_{A} \ps{\avint_{B(x,r)} \ps{\frac{|f-A|}{r}}^{p}}^{\frac{1}{p}}\]
where the infimum is over all affine maps $A:\bR^{d}\rightarrow \bR$. Set
\[\Omega_{p}(f):=\int_{\bR^{d}}\int_{0}^{\infty}\Omega_{f,p}(x,r)^{2}\frac{dr}{r} dx. \]
Then
\[
\Omega_{p}(f)\lec_{d,p} ||\grad f||_{2}^{2}.\]
\label{t:dorronsoro}
\end{theorem}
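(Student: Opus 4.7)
The plan is to establish the $p=2$ case via Littlewood--Paley theory and then transfer it to other values of $p$ by H\"older's inequality (for $1\le p\le 2$) and a Sobolev--Poincar\'e reduction (for $2<p<p(d)$). The $p=2$ case is the heart of the argument; since $\Omega_{f,p}(x,r)$ is nondecreasing in $p$ by H\"older applied inside the average at fixed affine approximant, the range $1\le p\le 2$ is immediate once $p=2$ is handled.

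For the $p=2$ case, the plan is to replace the infimum over affine maps with a convenient explicit Taylor-type choice. Pick a smooth, compactly supported, radial bump $\psi$ on $\R^{d}$ with $\int\psi=1$ (its first moments vanish by radial symmetry), set $\psi_{r}(y)=r^{-d}\psi(y/r)$, and define
\[
A_{x,r}(y) = (f\ast\psi_{r})(x) + ((\nabla f)\ast\psi_{r})(x)\cdot(y-x),
\]
so that $\Omega_{f,2}(x,r)^{2}\leq r^{-2}\avint_{|z|\leq r}|f(x+z)-A_{x,r}(x+z)|^{2}\,dz$. Taking the $L^{2}$-Fourier transform in $x$ at fixed $z$, one finds $\widehat{f(\cdot+z)-A_{\cdot,r}(\cdot+z)}(\xi)=\hat f(\xi)\,\hat K(\xi;z,r)$ with multiplier $\hat K(\xi;z,r)=e^{iz\cdot\xi}-\hat\psi(r\xi)(1+iz\cdot\xi)$. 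A Taylor expansion using $\hat\psi(r\xi)=1+O(r^{2}|\xi|^{2})$ (from the normalization and vanishing first moment of $\psi$) yields $|\hat K(\xi;z,r)|\lesssim r^{2}|\xi|^{2}$ for $|z|\leq r$, while $|\hat K(\xi;z,r)|\lesssim 1$ always. Plancherel's theorem and Fubini then give
\[
\int_{\R^{d}}\int_{0}^{\infty}\Omega_{f,2}(x,r)^{2}\,\frac{dr}{r}\,dx \;\leq\; \int_{\R^{d}}|\hat f(\xi)|^{2}\int_{0}^{\infty}r^{-2}\avint_{|z|\leq r}|\hat K(\xi;z,r)|^{2}\,dz\,\frac{dr}{r}\,d\xi.
\]
Splitting the inner integral at $r=|\xi|^{-1}$ and applying the two bounds on $\hat K$ shows each half contributes a constant multiple of $|\xi|^{2}$, so the inner double integral is $\lesssim |\xi|^{2}$, yielding $\Omega_{2}(f)\lesssim \int|\xi|^{2}|\hat f(\xi)|^{2}\,d\xi=\|\nabla f\|_{2}^{2}$.

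For the remaining range $2<p<p(d)$, the Sobolev--Poincar\'e inequality on each ball gives, with $1/q=1/p+1/d$ (so that $q<2$),
\[
r^{-1}\Big(\avint_{B(x,r)}|f(y)-A_{x,r}(y)|^{p}\,dy\Big)^{1/p} \;\lesssim\; \Big(\avint_{B(x,r)}\big|\nabla f-(\nabla f)_{B(x,r)}\big|^{q}\,dy\Big)^{1/q},
\]
reducing the desired bound to a quadratic tent-space estimate for the $q$-mean oscillation of $\nabla f$, controlled by $\|\nabla f\|_{2}^{2}$. This final reduction is the main obstacle: it requires a sub-$L^{2}$ square-function bound, achievable via Calder\'on--Zygmund stopping-time decomposition or vector-valued Fefferman--Stein maximal inequalities, exploiting $q<2$ to trade a weaker gradient exponent for extra integrability in the $x$ variable. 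With this bound in hand, the proof of the theorem is complete for all $1\leq p<p(d)$.
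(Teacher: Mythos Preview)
The paper does not prove this theorem; it is quoted as a black-box result from \cite{Dor85} and only applied in the one-dimensional setting (to the arclength parametrization $\gamma:\R\to\R^{n}$), where $p(1)=\infty$ and the $p\le 2$ case suffices after a trivial passage to $L^{\infty}$ for Lipschitz functions. Your $p=2$ argument via the explicit affine approximant and Plancherel is correct, and the extension to $1\le p\le 2$ by H\"older is immediate. So for the paper's purposes your sketch is already adequate.

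The genuine gap is in the range $2<p<p(d)$. Your Sobolev--Poincar\'e step is fine, but the estimate it reduces to,
\[
\int_{\R^{d}}\int_{0}^{\infty}\Big(\avint_{B(x,r)}\big|g-g_{B(x,r)}\big|^{q}\Big)^{2/q}\frac{dr}{r}\,dx\ \lesssim\ \|g\|_{2}^{2}
\qquad (g=\nabla f,\ q<2),
\]
is \emph{false}, and no stopping-time or Fefferman--Stein argument will rescue it. Take $f(x)=2^{-N}\sin(2^{N}x_{1})\phi(x)$ with $\phi\in C_{c}^{\infty}$ equal to $1$ on $B(0,1)$. Then $g=\nabla f$ satisfies $\|g\|_{2}^{2}\sim 1$, yet for every $x\in \tfrac12\bB$ and every $r\in[2^{-N+2},\tfrac14]$ one has $g_{B(x,r)}=O((2^{N}r)^{-1})$ while $\avint_{B(x,r)}|g|^{q}\sim 1$, so the $q$-oscillation is $\sim 1$ and the inner $dr/r$-integral is $\gtrsim N$; integrating in $x$ gives $\gtrsim N$, not $O(1)$. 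The point is that passing from first-order oscillation of $f$ to zeroth-order oscillation of $\nabla f$ throws away exactly the cancellation that makes Dorronsoro work: the first-order quantity $\Omega_{f,p}(x,r)$ for this $f$ is $\lesssim \min(2^{N}r,(2^{N}r)^{-1})$, whose square integrates to $O(1)$ in $dr/r$. A correct treatment of $p>2$ must keep the first-order structure, e.g.\ by bounding $\Omega_{f,p}$ pointwise by a square function built from second differences of $f$ (or from $\nabla f(y)-\nabla f*\psi_{r}(x)$ integrated along segments), and then running a Fourier/Littlewood--Paley argument on that quantity directly; alternatively one can follow Dorronsoro's original route through potential-space characterizations.
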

We can extend $\gamma$ to the whole real line by setting $\gamma(t)=\gamma(0)$ for $t\not\in [0,\ell(\gamma)]$ (recall that by our definition of $\Gamma$, $\gamma(0)=\gamma(\ell(\gamma))$, so our extension is $1$-Lipschitz on all of $\R$). For an interval $I$ in the real line, let 
\[
\Omega(I) =  |I|^{-1}\inf_{A} ||\gamma-A||_{L^{\infty}(I)}.
\]
where the infimum is over all affine functions $A:\R\rightarrow \R^{n}$. Then it is not hard to show using Dorronsoro's theorem  and that $\gamma$ is $1$-Lipschitz (see for example \cite[Lemma 2.5]{Azz16a}) that,
\begin{equation}
\label{e:sumIOmegalec1}
\sum_{I\subseteq [0,1]} \Omega(3I)^{2} |I|
\lec 1
\end{equation}
where the sum is over all dyadic intervals in $[0,1]$. Below, all intervals $I$ will refer to a dyadic interval. Let $\pi$ be the orthogonal projection into $V^{\perp}$, $\alpha>0$ (a small number to be chosen later), and $\cI_{1}$ be those intervals $I\subseteq [0,1]$ (not necessarily dyadic) for which 
\begin{equation}
\label{e:pigamma<alpha}
\diam(\pi\circ \gamma(I))<\alpha |I|.
\end{equation}
Let $A>0$ and let $\cI_{2}$ be those maximal dyadic intervals $I\subseteq [0,1]$ for which
\[
\sum_{I\subseteq J\subseteq [0,1]}\Omega(3J)^{2}>A.
\]
{\bf Claim 2:} If 
\[
E_{\gamma} = [0,1]\backslash\bigcup_{I\in \cI_{1}\cup \cI_{2}}I, \;\; F_{\gamma} = \gamma(E_{\gamma}).
\]
then
\begin{equation}
\label{e:egamma>c/4}
|E_{\gamma}|\geq |F_{\gamma}|\geq c 
\end{equation}
and for some constant $c'$
\begin{equation}
\label{e:p>1onEgamma}
\rho\geq c'\one_{F_{\gamma}}.
\end{equation}
These two statements then imply Claim 1 (i.e. \eqref{e:rhoQadmissible}), and thus finish the proof of the Lemma. So now we focus on proving Claim 2.

We first prove \eqref{e:egamma>c/4}. By Chebychev's inequality and \eqref{e:sumIOmegalec1},
\begin{equation}
\label{e:1/A}
\av{\bigcup_{\cI_{2}}I}\lec \frac{1}{A}.
\end{equation}
Let $\{I_{j}\}$ be a subcollection of the intervals $\cI_{1}$ so that no point in $\bigcup_{I\in \cI_{1}} I$ is contained in more than two of the $I_{j}$ and $\bigcup_{I\in \cI_{1}} I=\bigcup_{j}I_j $. Then
\begin{align}
\av{\pi\circ \gamma\ps{\bigcup_{I\in \cI_{1}}I}}
& = \av{\pi\circ \gamma\ps{\bigcup I_{j}}}
\leq \sum |\pi\circ \gamma(I_{j})|
\stackrel{\eqref{e:pigamma<alpha}}{<} \sum_{j} \alpha |I_{j}| \notag \\
& \leq 2\alpha \av{\bigcup I_{j}} \leq 2\alpha.
\label{e:<2delta}
\end{align}
Since $\gamma\in \Gamma$ connects $\frac{c}{4}B$ to $B(x_{0},\frac{cr_{B}}{4})$, and since we're assuming now $r_{B}\sim 1$, we know
\begin{equation}
\label{e:pigamma>c/2}
\pi(\gamma([0,1]))=|\pi(\gamma)|\geq \frac{c}{2}r_{B}\gec c.
\end{equation}
(recall we are using $\gamma$ to denote both the function and its image). This means that for $\alpha$ small enough and $A$ large enough (depending on $c$), and since $\gamma$ and $\pi$ are $1$-Lipschitz,
\begin{align*}
|E_{\gamma}|\geq \av{F_{\gamma}}
& \geq |\pi( F_{\gamma})|
 \stackrel{ \eqref{e:1/A}  \atop\eqref{e:<2delta} }{\geq} |\pi\circ \gamma ([0,1])| - 2\alpha - \frac{C'}{A}
\stackrel{ \eqref{e:pigamma>c/2}}{\gec} c.
\end{align*}
This proves \eqref{e:egamma>c/4}. 

Now we focus on \eqref{e:p>1onEgamma}. It suffices to show the following claim:

\noindent {\bf Claim 3:} for $\epsilon>0$ small enough, if $x\in F_{\gamma}$, $x'\in E_{\gamma}$ is such that $\gamma(x')= x$, $I$ is any dyadic interval containing $x'$ and $r>0$ is such that
\begin{equation}
\label{e:I<r<I}
 |I|/2\leq 8r/\alpha< |I|
 \end{equation}
 then 
 \begin{equation}
 \label{e:eta<omega}
 \eta(x,r)\lec_{\alpha} \Omega(3I).
 \end{equation}
 Indeed, if this is the case, then for each $x\in F_{\gamma}$
\begin{align*}
\int_{0}^{r_{B}} \eta(x,r)^{2}\frac{dr}{r}
& \leq \int_{\alpha/8}^{1}\frac{dr}{r}+\sum_{x'\in I\subseteq [0,1]} \int_{|I|\alpha/16}^{|I|\delta/8} \eta(x,r)^{2}\frac{dr}{r}\\
& \lec_{\alpha} 1+\sum_{x'\in I\subseteq [0,1]} \int_{|I|\alpha/16}^{|I|\alpha/8} \Omega(3I)^{2}  \frac{dr}{r} \\
& \lec 1+ \sum_{x'\in I\subseteq [0,1]} \Omega(3I)^{2}
\lec \frac{A}{\ve^{2}}
\end{align*}
where in the last line we used the fact that $x'\in E_{\gamma}$ implies no dyadic interval $I\subseteq [0,1]$ containing $x'$ is in $\cI_2$. This  then implies $\rho(x)\gec 1$ for each $x\in F_{\gamma}$, proving \eqref{e:p>1onEgamma} and thus Claim 2. So now we focus on Claim 3.

Let $x,x', I,r$ be as in the claim. Let $\ve>0$, which will be fixed later and depend on $\alpha$. We can assume $\Omega(3I)<\ve$, for otherwise 
\[
\eta(x,r)\leq  1\leq \ve^{-1}\Omega(3I)\lec_{\alpha} \Omega(3I).
\]
To prov \eqref{e:eta<omega}, we need to produce a line $L_{x,r}$ passing through $x$ whose distance to $\gamma$ (and hence to $X$) is no more than a constant times $\Omega(3I)$, and also whose angle with $V$ is at least $\theta$ (where $\theta$ we will choose below and will depend on $\alpha$).

Let $A_{I}:\R\rightarrow \R^{n}$ be the affine map that achieves the infimum in $\Omega(3I)$, and let $L_{I} = A_{I}(\R)$. Since $x'\in E_{\gamma}$, we know $I$ is not contained in an interval from $\cI_{1}$, and so $\diam(\pi \circ\gamma(I))\geq \alpha|I|$. Since $\Omega(3I)<\ve$, for $\ve \ll \alpha$ we then have that 
\begin{equation}
\label{e:delta/2I}
\diam A_{I}(I) \geq  \diam \pi\circ A_{I}(I) \geq \diam \pi\circ \gamma(I) - 2\Omega(3I)|3I|
>\frac{\alpha}{2}|I|
\end{equation}
which implies
\begin{equation}
\label{e:A-lower-lip}
|A_{I}(a)-A_{I}(b)|\geq \frac{\alpha}{2} |a-b| \mbox{ for all }a,b\in \R
\end{equation}
Then
\begin{equation}
\label{e:x-A_I(x')}
|x-A_{I}(x')|
=|\gamma(x')-A_{I}(x')|
\leq  \Omega(3I)|3I| < \ve |3I|.
\end{equation}
Thus, we can choose  $\ve\ll 1$ so that 
\begin{align*}
A_{I}^{-1}(B(x,2r))
& \subseteq A_{I}^{-1}(B(A_I(x'),2r+|x-A_I(x')|))\\
& \stackrel{\eqref{e:x-A_I(x')}\atop \eqref{e:A-lower-lip}}{\subseteq} B(x',2(2r+\ve |3I| )/\alpha)
\stackrel{\eqref{e:I<r<I}}{\subseteq} B(x',8r/\alpha)\stackrel{\eqref{e:I<r<I}}{\subseteq} 3I
\end{align*}

Then
\begin{align*}
\sup_{z\in L_{I}\cap B(x,2r)}\dist(z,X) & \leq  \sup_{z\in L_{I}\cap B(x,2r)}\dist(z,\gamma)
 \leq \sup_{y\in A_{I}^{-1}(B(x,2r))}|A_{I}(y)-\gamma(y)|\\
& \leq \sup_{y\in 3I } |A_{I}(y)-\gamma(y)|
=|3I| \Omega(3I)
\stackrel{\eqref{e:I<r<I}}{\leq} \frac{48}{\alpha}r \Omega(3I)
\end{align*}
Now, the line $L_{I}$ doesn't pass through $x$ necessarily, so let $L_{x,r}$ be the line parallel to $L_{I}$ passing through $x$. Then for $\ve<\frac{\alpha}{48}$,
\[
\dist(L_{x,r},L_{I}) 
\stackrel{\eqref{e:x-A_I(x')}}{\leq} |3I|\Omega(3I) 
\stackrel{\eqref{e:I<r<I}}{\leq} \frac{48}{\alpha}  r\ve <r
\]
and so for each $z\in L_{x,r}\cap B(x,r)$, the closest point to $z$ in $L_{I}$ is contained in $B(x,2r)$. Thus,
\begin{equation}
\label{e:dist<96}
\sup_{z\in L_{x,r}\cap B(x,r)}\dist(z,X)
\leq |3I|\Omega(3I)+\sup_{z\in L_{I}\cap B(x,2r)}\dist(z,X) 
\leq  \frac{96}{\alpha}r \Omega(3I).
\end{equation}
Finally, by \eqref{e:delta/2I},
\[
\angle(L_{x,r},V)= \angle(L_{I},V) =\av{\pi|_{L_{I}}}\gec \alpha.
\]
and so if we pick $\theta\ll\alpha$, then $\angle(L_{I},V) \geq \theta$. This and \eqref{e:dist<96} imply \eqref{e:eta<omega}, finishing the proof of Claim 3 and thus the lemma.

\end{proof}

\subsection{Proof of Lemma \ref{l:eta-carleson}: Part II}

To finish the proof of Lemma \ref{l:eta-carleson}, we now need to improve the local estimate in the previous lemma to a Carleson estimate. That is, we know inside each ball $B$ there is a large set where $\int_{0}^{r_{B}}\eta_{X}^{V,\theta}(x,r)^2\frac{dr}{r}$ is $L^\infty$, but we would like to improve this to knowing that the average of this integral in $B$ (or in a cube, as we will really show) is bounded. 

First notice that since $X$ is Ahlfors $d$-regular, for any $V\in \cG(n,d-1)$ and any cube $Q\in \cD$, the ball $B_{Q}':=\frac{c_{0}}{C}B_{Q}$ satisfies the conditions of Lemma \ref{l:generalV} (recall $c_0$ from the definion of Christ-David cubes) for some $c$ depending on the Ahlfors regularity constants (in other words, an Ahlfors $d$-regular set $X$ cannot be locally concentrated around a $(d-1)$-dimensional plane). Let
\[
E_{Q} = E_{B_{Q}'}^{V}\subseteq c_{0}B_{Q}\cap X\subseteq Q. 
\]

We will use the following lemma. 

\begin{lemma} \cite[Lemma IV.1.12]{of-and-on} Let $\alpha:\cD\rightarrow [0,\infty)$ be given and suppose there is $N>0$ and $\eta>0$ so that for all $R\in \cD$,
\[
\av{\ck{x\in R : \sum_{x\in Q\subseteq R}\alpha(Q)\leq N}}\geq \eta |R|.
\]
Then 
\[
\sum_{Q\subseteq R}\alpha(Q)\lec_{N,\eta} |R| \mbox{ for all }R\in \cD.
\]
\end{lemma}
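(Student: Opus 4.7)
The plan is an iterated stopping-time (good-$\lambda$) argument, in the spirit of the standard John--Nirenberg proofs for Carleson sums. Fix $R\in\cD$ and write $\sigma_S(x):=\sum_{x\in Q\subseteq S}\alpha(Q)$. First I would introduce a stopping operation: for each cube $S$, let $\cS(S)$ consist of the maximal dyadic subcubes $S'\subsetneq S$ for which $\sum_{S'\subseteq Q\subseteq S}\alpha(Q)>N$. By construction $\bigcup_{S'\in\cS(S)}S' = \{x\in S:\sigma_S(x)>N\}$, which is exactly the complement in $S$ of the good set $G_S$ provided by the hypothesis applied to the cube $S$; consequently $\sum_{S'\in\cS(S)}|S'|\leq(1-\eta)|S|$. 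Iterating with $\cS_0:=\{R\}$ and $\cS_{k+1}:=\bigcup_{S\in\cS_k}\cS(S)$ produces the geometric decay
\[
\sum_{S\in\cS_k}|S|\leq(1-\eta)^k|R|.
\]

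Next, every $Q\subseteq R$ has a unique smallest stopping ancestor $P(Q)\in\bigcup_k\cS_k$, and the tiers $\cI(S):=\{Q:P(Q)=S\}$ partition the subcubes of $R$. The heart of the proof is the per-tier pointwise estimate
\[
\sum_{Q\in\cI(S),\,x\in Q}\alpha(Q)\leq N\qquad\text{for every }x\in S,
\]
which I would verify in two cases. If $x\in G_S$, then every dyadic ancestor of $x$ inside $S$ lies in $\cI(S)$ (since $x$ avoids all children of $\cS(S)$), so the sum equals $\sigma_S(x)\leq N$. If instead $x\in S'$ for some $S'\in\cS(S)$, then the $Q\in\cI(S)$ containing $x$ are precisely those with $S'\subsetneq Q\subseteq S$, i.e.\ those containing the dyadic parent of $S'$, and the maximality clause defining $\cS(S)$ gives exactly that the $\alpha$-mass of this range is $\leq N$. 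Integrating the pointwise bound in $x$, summing over $S\in\cS_k$, and summing a geometric series in $(1-\eta)$ then yields
\[
\sum_{Q\subseteq R}\alpha(Q)\lec_{N,\eta}|R|.
\]

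The delicate step will be the second (``stopping-child'') case of the pointwise bound, where one must use the dyadic nesting together with the maximality of $S'$ in $\cS(S)$ to see that no $Q\in\cI(S)$ containing $x$ can be contained in $S'$ itself, thereby reducing the estimate to the stopping threshold $N$. Once that bookkeeping is in place, everything else is forced by the geometric decay $\sum_{S\in\cS_k}|S|\leq(1-\eta)^k|R|$ and ordinary summation.
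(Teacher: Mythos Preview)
The paper does not prove this lemma; it merely quotes \cite[Lemma IV.1.12]{of-and-on}. Your stopping-time argument is exactly the standard John--Nirenberg/Corona proof of that result (and is essentially the argument in David--Semmes): iterate the hypothesis to get geometric decay $\sum_{S\in\cS_k}|S|\le(1-\eta)^k|R|$, control each tier pointwise by $N$ via the two-case analysis, and sum. Both cases of your pointwise bound are handled correctly; in particular, the ``stopping-child'' case reduces cleanly to the maximality of $S'$ applied to its parent, as you say.

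One bookkeeping remark. When you ``integrate the pointwise bound in $x$'' over $S$, what you actually obtain is
\[
\sum_{Q\in\cI(S)}\alpha(Q)\,|Q|\ \le\ N\,|S|,
\]
and hence, after summing the geometric series,
\[
\sum_{Q\subseteq R}\alpha(Q)\,|Q|\ \le\ \frac{N}{\eta}\,|R|.
\]
This weighted conclusion is precisely what the paper actually uses downstream (there $\alpha=\eta_{\delta}\in\{0,1\}$ and the target Carleson sum is $\sum_Q\eta_{\delta}(Q)\,|Q|$). The lemma as transcribed here drops the factor $|Q|$ from the conclusion, which appears to be a slip: the unweighted inequality $\sum_{Q\subseteq R}\alpha(Q)\lesssim_{N,\eta}|R|$ is dimensionally inconsistent and fails already for $\alpha(Q)=\one_{\{Q=R\}}$ when $|R|$ is small. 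Your argument proves the correct, weighted statement, so no change to the reasoning is needed---just be aware that the displayed conclusion you copied from the statement should carry the $|Q|$.
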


Thus, our lemma will follow once we show the following claim. 

\noindent {\bf Claim 1:} if for $\delta>0$ and $Q\in \cD$ we set
\[
\eta_{\delta} (Q)=\isif{ 1 & \eta_{X}^{V,\theta}(Q)  \geq \delta \\ 
0 &  \eta_{X}^{V,\theta}(Q) < \delta}
\]
then for each $R\in \cD$ there is $G_{R}\subseteq R$ so that $|G_{R}|\gec |R|$ and
\[
\sum_{x\in Q\subseteq R}\eta_{\delta}(Q)\lec_{\delta} 1 \;\; \mbox{ for all } \;\; x\in G_{R}.
\]

Recall that $|E_{R}|\geq c|R|$ for some constant $c$ by Lemma \ref{l:generalV} and the definition of $E_{R}$. Let $Q_{j}$ be the maximal cubes in $R$ for which $|Q_{j}\cap E_{R}|<\frac{c}{2} |Q_{j}|$. Then
\[
\sum |Q_{j}\cap E_{R}|
< \frac{c}{2}\sum |Q_{j}|\leq \frac{c}{2}|R|\leq \frac{1}{2}|E_{R}|
\]
so if we set $F_{R}= E_{R}\backslash \bigcup Q_{j}$, then 
\[
|F_{R}|\geq \frac{1}{2} |E_{R}|\geq \frac{c}{2} |R|
\]
and $F_{R}$ has the property that
\begin{equation}
\label{e:Qcap F_R}
|E_{R}\cap Q|\geq \frac{c}{2} |Q| \;\;\mbox{ whenever }\;\; Q\cap F_{R}\neq\emptyset \mbox{ and }Q\subseteq R.
\end{equation}

We require the following lemma.

\begin{lemma} \cite[Appendix]{AM16}
Let $\mu$ be a $C_{\mu}$-doubling measure and let $\cD$ the cubes from Theorem \ref{t:Christ} for $X=\supp \mu$ with admissible constants $c_{0}$ and $\rho$. Let $E\subseteq Q_{0}\in \cD$, $M>1$, $\delta>0$, and set
\begin{multline*}
\cP=\{Q\subseteq Q_0: 
Q\cap E\neq\emptyset, \exists \; \xi\in B(\zeta_{Q},M\ell(Q)) \\
\mbox{ such that } \dist(\xi,E)\geq \delta\ell(Q)\}.
\end{multline*}
Then there is $C_{1}=C_{1}(M,\delta,C_{\mu})>0$ so that, for all $Q'\subseteq Q_{0}$,
\begin{equation}
\sum_{Q\subseteq Q' \atop Q\in \cP} \mu(Q)\leq C_{1} \mu(Q').
\label{e:sumP}
\end{equation}
\label{l:porous}
\end{lemma}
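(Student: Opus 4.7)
My plan is to associate to each $Q\in\cP$ a ``witness ball'' $H_Q$ disjoint from $E$ whose measure is comparable to $\mu(Q)$, to prove these balls have bounded pointwise overlap multiplicity, and then to integrate in order to get the Carleson packing. Concretely, for each $Q\in\cP$ with $Q\subseteq Q'$ pick $\xi_Q\in B(\zeta_Q,M\ell(Q))$ with $\dist(\xi_Q,E)\geq\delta\ell(Q)$ (available by the definition of $\cP$) and set $H_Q:=B(\xi_Q,\delta\ell(Q)/2)$. Then $H_Q\cap E=\emptyset$ by the triangle inequality, and since
\[
Q\subseteq B(\zeta_Q,\ell(Q))\subseteq B(\xi_Q,(M+1)\ell(Q)),
\]
iterating the doubling inequality $\lceil\log_2(2(M+1)/\delta)\rceil$ times yields a constant $C'=C'(M,\delta,C_\mu)$ with $\mu(Q)\leq C'\mu(H_Q)$.

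The crucial estimate is a pointwise bound on $N(x):=\#\{Q\in\cP:Q\subseteq Q',\ x\in H_Q\}$. Suppose $x\in H_Q$. The hole condition gives
\[
\dist(x,E)\geq\dist(\xi_Q,E)-|x-\xi_Q|\geq\delta\ell(Q)/2,
\]
so $\ell(Q)\leq 2\dist(x,E)/\delta$. Conversely, since $Q\cap E\neq\emptyset$ one has $\dist(\zeta_Q,E)\leq\ell(Q)$, and $|\zeta_Q-x|\leq M\ell(Q)+\delta\ell(Q)/2\leq(M+1)\ell(Q)$ (we may assume $\delta\leq1$), so $\dist(x,E)\leq(M+2)\ell(Q)$, i.e.\ $\ell(Q)\geq\dist(x,E)/(M+2)$. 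Thus $\ell(Q)=5\rho^{k(Q)}$ is pinned to a range with ratio $2(M+2)/\delta$, allowing only $N_1=N_1(M,\delta,\rho)$ generations. For each admissible generation $k$, $\zeta_Q$ must lie in $X_k\cap B(x,5(M+1)\rho^k)$, and since $X_k$ is a maximal $\rho^k$-net in a $C_\mu$-doubling space, at most $N_2=N_2(M,C_\mu)$ such centers fit. Hence $N(x)\leq N_0:=N_1N_2$.

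Combining these with the observation that $Q\subseteq Q'$ forces $H_Q\subseteq B(\zeta_{Q'},(M+2)\ell(Q'))$, I obtain
\begin{align*}
\sum_{Q\in\cP,\,Q\subseteq Q'}\mu(Q)
&\leq C'\sum_{Q\in\cP,\,Q\subseteq Q'}\mu(H_Q)
= C'\int_X N(x)\,d\mu(x)\\
&\leq C'N_0\,\mu\bigl(B(\zeta_{Q'},(M+2)\ell(Q'))\bigr)
\lec \mu(Q'),
\end{align*}
where the final step is doubling applied to $B(\zeta_{Q'},c_0\ell(Q'))\subseteq Q'$. The only delicate step is the multiplicity bound, which requires playing the lower bound $\dist(x,E)\geq\delta\ell(Q)/2$ from the hole against the upper bound $\dist(x,E)\leq (M+2)\ell(Q)$ coming from $Q\cap E\neq\emptyset$ in order to trap $\ell(Q)$ in a finite range of generations; everything else is standard doubling bookkeeping.
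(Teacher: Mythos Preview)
Your argument is correct. The paper does not actually prove this lemma but merely quotes it from \cite[Appendix]{AM16}, so there is no in-paper proof to compare against; your witness-ball/bounded-overlap argument is the standard one for such porosity Carleson packings.
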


Let $\cP_{R}$ be the cubes from Lemma \ref{l:porous} with $M=2$, $\mu=\cH^{d}|_{X}$, $E=F_{R}$, and $\delta/4$ in place of $\delta$, and set 
\[
\cC_{R}=\{Q\subseteq R: Q\cap F_{R}\neq\emptyset \;\; \mbox{ and }\;\; Q\not\in \cP_{R}\}.\]  

\noindent {\bf Claim 2:} for $\phi>0$ small enough depending on $\delta$, if $Q\in \cC_{R}$, $\eta=\eta_{X}^{V,\theta}$, and 
\[
\tilde{Q} := \bigcup\{T\in \cD:  \ell(T)=\ell(Q) \mbox{ and }T\cap  3B_{Q}\neq\emptyset\}
\]
then $\eta(Q)<\delta$ if
\begin{equation}
\label{e:E_Rcap Q-int}
\avint_{E_{R}\cap \tilde{Q} }\int_{\ell(Q)}^{\ell(Q)/\rho} \eta(x,r)^{2}\frac{dr}{r} <\phi 
\end{equation}
where $\rho$ is as in the definition of Christ-David cubes.

Before proving this, let's show how it implies Claim 1: note that the sets $\{\tilde{Q}: Q\in \cD_{k}\}$ have bounded overlap for each $k$, and if $\eta(Q)\geq \delta$, then the reverse inequality in \eqref{e:E_Rcap Q-int} holds, and so
\begin{align*}
\sum_{Q\in \cC_{R}}\eta_{\delta}(Q)|Q|
& \leq \sum_{Q\in \cC_{R} \atop \eta(Q)\geq \delta }|Q|
 \stackrel{\eqref{e:Qcap F_R}}{\leq} \frac{2}{c}  \sum_{Q\in \cC_{R} \atop \eta(Q)\geq \delta } |E_{R}\cap Q|\\
& \stackrel{\eqref{e:E_Rcap Q-int}}{\leq}  \frac{2}{c\phi } \sum_{Q\subseteq R\atop Q\cap F_{R}\neq\emptyset  } \int_{E_{R}\cap \tilde{Q}}\int_{ \ell(Q)}^{\ell(Q)/\rho} \eta(x,r)^{2}\frac{dr}{r} \\
& \lec \frac{2}{c\phi }  \int_{E_{R}}\int_{0}^{\ell(R)/\rho} \eta(x,r)^{2}\frac{dr}{r}
\stackrel{\eqref{e:etaintonE}}{\lec}_{\phi} |R|.
\end{align*}

Thus,
\begin{equation}
\label{e:Qcap FR sum}
\sum_{Q\subseteq R\atop Q\cap F_{R}\neq\emptyset} \eta_{\delta}(Q)|Q|
\leq \sum_{Q\in \cP_{R}}|Q|+ \sum_{Q\in \cC_{R}}\eta_{\delta}(Q)|Q|
\stackrel{\eqref{e:sumP}}{\lec}_{\phi} |R|.
\end{equation}

Let $N$ be a large integer and 
\[
G_{R} =\ck{x\in F_{R}: \sum_{x\in Q\subseteq R}  \eta_{\delta}(Q) \leq N}.
\]
Then
\begin{align*}
|F_{R}\backslash G_{R}|
& \leq \frac{1}{N}\int_{F_{R}\backslash G_{R}} \sum_{x\in Q\subseteq R}  \eta_{\delta}(Q)\\
& = \frac{1}{N}\sum_{Q\subseteq R\atop Q\cap F_{R}\neq\emptyset} \eta_{\delta}(Q) |(F_{R}\backslash G_R)\cap Q|\\
& \leq  \frac{1}{N}\sum_{Q\subseteq R\atop Q\cap F_{R}\neq\emptyset} \eta_{\delta}(Q) | Q|
\stackrel{\eqref{e:Qcap FR sum}}{\lec}_{\phi} \frac{|R|}{N}.
\end{align*}

Thus, for $N$ large enough (depending on $\phi$)
\[
|G_{R}|\geq \frac{|F_{R}|}{2} \geq \frac{c}{4}|R|
\]
which proves Claim 1. 

Thus, it remains to show Claim 2, which we now focus on. By \eqref{e:eta-monotone}, \eqref{e:E_Rcap Q-int} implies
\begin{equation}
\label{e:eta-over-eR<phi}
\avint_{E_{R}\cap \tilde{Q}}\eta(x, \ell(Q))^{2} \lec \phi.
\end{equation}
By Chebychev, this implies that if 
\[
S_{Q}:=\{x\in E_{R}\cap \tilde{Q}:\eta(x, \ell(Q)) < \phi^{\frac{1}{4}}\}
\]
then
\begin{equation}
\label{e:ERS_Q}
|\tilde{Q}\cap E_{R} \backslash S_{Q}|
\leq \phi^{-1/2} \int_{E_{R}\cap \tilde{Q}} \eta(x, \ell(Q))^{2} 
\stackrel{\eqref{e:eta-over-eR<phi}}{\lec} \phi^{\frac{1}{2}}|E_{R}\cap \tilde{Q}|\lec \phi^{\frac{1}{2}}|Q|
\end{equation}

We will show that 
\begin{equation}
\label{e:distxSQ}
\dist(x,S_{Q})<\frac{\delta}{2} \ell(Q) \;\; \mbox{ for all }x\in Q, \;\; Q\in \cC_{R}.
\end{equation}

If we prove this, then by \eqref{e:eta-lip}, for $\phi^{\frac{1}{4}}<\frac{\delta}{2}$, 
\[
\eta(x, \ell(Q)) < \phi^{\frac{1}{4}}+\frac{ \frac{\delta}{2} \ell(Q)}{\ell(Q)} <\delta \;\;\mbox{ for all $x\in Q$}
\]
which proves Claim 2. So now we focus on \eqref{e:distxSQ}. 

Let $x\in Q\in \cC_{R}$. Note that by the definition of $\cC_{R}$ and since $Q\subseteq 2B_Q$, this implies that there is $x'\in F_{R}$ with $|x-x'|<\frac{\delta}{4}\ell(Q)$. If $\dist(x',S_{Q})=0$, then
\[
\dist(x,S_{Q})=|x-x'|<\frac{\delta}{4} \ell(Q)\] 
and we're done, so assume $\dist(x',S_{Q})>0$. For $\delta>0$ small, $x'\in 2B_{Q}\cap X\subseteq \tilde{Q}$. Since $x'\in F_{R}\subseteq R$, we can pick a maximal cube $Q'\subseteq R\cap \tilde{Q}\backslash S_Q$ containing $x'$. Since $Q'\cap F_{R}\neq\emptyset$, \eqref{e:Qcap F_R} implies 
\begin{equation}
\label{e:Q'<phi}
|Q'|
\stackrel{\eqref{e:Qcap F_R}}{\leq} \frac{2}{c} |E_{R}\cap Q'|
\leq \frac{2}{c} |E_{R}\cap \tilde{Q} \backslash S_{Q}|
\stackrel{\eqref{e:ERS_Q}}{\lec} \phi^{\frac{1}{2}}|Q|.
\end{equation}
By \eqref{e:ERS_Q}, for $\phi$ small, $S_{Q}\cap T\neq\emptyset$ for each cube $T$ with $\ell(T)=\ell(Q)$ intersecting $3B_{Q}$. Hence, we must have $\ell(Q')<\ell(Q)$. Thus, since $Q'$ is maximal, its parent $Q''$ must intersect $S_Q$, and so by Ahlfors regularity,
\[
\dist(x',S_{Q})\leq \diam Q''\lec  \ell( Q') \stackrel{\eqref{e:Q'<phi}}{\lec} \phi^{\frac{1}{2d}}\ell(Q).
\]
Thus, if we choose $\phi$ small enough so that $\dist(x',S_{Q})<\frac{\delta}{4}\ell(Q)$, we then have 
\[
\dist(x,S_{Q})\leq |x-x'|+\dist(x',S_{Q})<\frac{\delta}{4}\ell(Q)+\frac{\delta}{4}\ell(Q)=\frac{\delta}{2}\ell(Q)
\]
which proves \eqref{e:distxSQ} and thus finishes the proof of Lemma \ref{l:eta-carleson}.

\section{The proof of the Weak Geometric Lemma}

We now use very similar arguments to prove Lemma \ref{l:wgl}. 

\subsection{$\xi$-numbers}

We introduce a new quantity similar to the $\eta$-numbers that detects whether, given a $d$-plane $V$, if $X$ is not close to the plane in some ball, there is a line segment through the center of a ball lying close to $X$ and transversal to $V$. 

 For $V\in \cA(n,d)$,  $\beta>0$ and $\theta>0$, we set
\[
\xi_{X,V}^{\theta,\beta}(x,r) = \isif{  0 & \beta_{X}(x,r,V)<\beta \\ \eta_{X}^{V,\theta}(x,r) & \beta_{X}(x,r,V)\geq\beta }.
\]

\begin{lemma}
For $x\in X$, $\theta,\beta>0$, and $V\in \cA(n,d)$,
\begin{equation}
\label{e:xi-monotone}
 \xi_{X,V}^{\theta,\beta s/r}(x,r) \leq \frac{s}{r}  \xi_{X,V}^{\theta,\beta}(x,s) \mbox{ for }r\leq s
 \end{equation}
 and for $\beta'\geq \beta$, 
 \begin{equation}
 \label{e:betabeta'}
 \xi_{X,V}^{\theta,\beta'}(x,s)\leq  \xi_{X,V}^{\theta,\beta}(x,s)
 \end{equation}
 \end{lemma}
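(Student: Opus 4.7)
The plan is to verify both inequalities by a simple case analysis on whether the $\beta$-threshold is met, invoking the previously proved scale-monotonicity of the $\beta$- and $\eta$-numbers.

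For \eqref{e:xi-monotone}, fix $r\leq s$. If $\beta_{X}(x,r,V)<\beta s/r$, then the left-hand side is $0$ by definition of $\xi$, and since $\xi$ is always nonnegative the inequality is trivial. Otherwise $\beta_{X}(x,r,V)\geq \beta s/r$, and the scale-monotonicity of $\beta$-numbers \eqref{e:betamonotone} gives
\[
\beta_{X}(x,s,V)\;\geq\; \tfrac{r}{s}\,\beta_{X}(x,r,V)\;\geq\;\tfrac{r}{s}\cdot\tfrac{\beta s}{r}\;=\;\beta,
\]
so in this case both $\xi_{X,V}^{\theta,\beta s/r}(x,r)=\eta_{X}^{V,\theta}(x,r)$ and $\xi_{X,V}^{\theta,\beta}(x,s)=\eta_{X}^{V,\theta}(x,s)$. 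The desired inequality then follows immediately from \eqref{e:eta-monotone}.

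For \eqref{e:betabeta'}, suppose $\beta'\geq \beta$. If $\beta_{X}(x,s,V)<\beta'$ then the left-hand side is $0$ and the inequality holds trivially. If instead $\beta_{X}(x,s,V)\geq\beta'\geq \beta$, then by definition both $\xi_{X,V}^{\theta,\beta'}(x,s)$ and $\xi_{X,V}^{\theta,\beta}(x,s)$ equal $\eta_{X}^{V,\theta}(x,s)$, so equality (hence $\leq$) holds.

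There is no real obstacle here: the whole proof is bookkeeping, and the only inputs are the already-established monotonicities \eqref{e:betamonotone} and \eqref{e:eta-monotone}. The one point worth stating cleanly is the reason the threshold scales as $\beta s/r$ in \eqref{e:xi-monotone}: it is exactly the value that, via \eqref{e:betamonotone}, guarantees the $\beta$-threshold at scale $s$ is met whenever it is met at scale $r$, which is what turns the $\xi$-inequality into an $\eta$-inequality.
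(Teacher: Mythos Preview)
Your proof is correct and essentially the same as the paper's. The only cosmetic difference is that for \eqref{e:xi-monotone} you split cases on whether the threshold at scale $r$ is met, whereas the paper splits on the threshold at scale $s$ (using $\xi\leq\eta$ in the nontrivial case rather than your forward application of \eqref{e:betamonotone}); the ingredients and logic are identical.
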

 
 \begin{proof}
If $\beta_{X}(x,s,V)<\beta$, then \eqref{e:betamonotone} implies $\beta_{X}(x,r,V)< \beta s/r$, and so 
\[
 \xi_{X,V}^{\theta,\beta s/r}(x,r) =0\leq  \frac{s}{r}  \xi_{X,V}^{\theta,\beta}(x,s).
 \]
 Otherwise, if  $\beta_{X}(x,s,V)\geq \beta$, then 
\[
\frac{s}{r}  \xi_{X,V}^{\theta,\beta}(x,s)
=\frac{s}{r}  \eta_{X}^{V,\theta}(x,s)
\stackrel{\eqref{e:eta-monotone}}{\geq} \eta_{X}^{V,\theta}(x,r)
\geq  \xi_{X,V}^{\theta,\beta s/r}(x,r) .
\]
This proves \eqref{e:xi-monotone}. For \eqref{e:betabeta'}, if $\beta_{X}(x,s,V)< \beta'$, then $ \xi_{X,V}^{\theta,\beta'}(x,s)=0$ and \eqref{e:betabeta'} follows; if $\beta_{X}(x,s,V)\geq \beta'\geq \beta$, then 
\[
\xi_{X,V}^{\theta,\beta'}(x,s)=\eta_{X}^{V,\theta}(x,s)=\xi_{X,V}^{\theta,\beta}(x,s).
\]
 \end{proof}

\subsection{Finding balls that are close to a plane or have segments transversal to the plane}

\begin{lemma}
\label{l:xi-int}
There is $\beta_0>0$ (depending on $n$, the Loewner constants, and the Ahlfors regularity constants), so that for all $0<\beta<\beta_0$, there is $\theta>0$ depending on $\beta$ so that the following holds. If $V\in \cG(n,d)$, then for all $Q\in \cD$ there is $E_{Q,V}'\subseteq Q$ with $|E_{Q}'|\gec_{\beta} |Q|$ and 
\begin{equation}
\label{e:int eta<1}
\int_{0}^{\ell(Q)}\xi_{X,V}^{\theta,\beta}(x,r)^{2}\frac{dr}{r} \lec_{\beta} 1 \;\; \mbox{ for all } x\in E_{Q,V}'.
\end{equation}
\end{lemma}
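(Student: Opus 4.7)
My plan is to adapt the curve-family construction from the proof of Lemma \ref{l:generalV} via a Christ--David stopping time that exploits the vanishing clause built into $\xi_{X,V}^{\theta,\beta}$: at scales where $\beta_X(x,r,V)<\beta$, the $\xi$-number is zero and contributes nothing to \eqref{e:int eta<1}. This is the essential new feature compared to Lemma \ref{l:generalV}, which handles planes of codimension at least $2$ where Ahlfors $d$-regularity automatically produces a point of $X$ far from $V+x_B$ in every ball. Here $V$ is a $d$-plane, so no such point need exist at the ambient scale $\ell(Q)$, but by design those are precisely the scales where $\xi=0$.

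First I will fix $\beta_1\ll\beta$ and an enlargement factor $\kappa\geq 1$, both depending on $\beta$, $n$, and the Ahlfors and Loewner constants, and let $\{Q_j\}$ be the maximal subcubes $Q'\subseteq Q$ for which there exists $y\in\kappa B_{Q'}\cap X$ with $\dist(y,V+x_{Q'})\geq \beta_1\ell(Q')$. At each such $Q_j$, Lemma \ref{l:generalV} applies to the ball $\kappa B_{Q_j}$ together with the affine plane $V+x_{Q_j}$; since the angle data of $V+x_{Q_j}$ coincides with that of $V$ we have $\eta_X^{V,\theta}=\eta_X^{V+x_{Q_j},\theta}$, and we obtain a set $E_{Q_j}\subseteq \kappa B_{Q_j}\cap X$ of measure $\gec |Q_j|$ (after a routine Ahlfors-regularity adjustment to retain a fixed fraction inside $Q_j$) on which
\[
\int_0^{\ell(Q_j)}\eta_X^{V,\theta}(x,r)^2\,\frac{dr}{r}\lec 1.
\]

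For a cube $Q'\subseteq Q$ that is \emph{not} in the stopped family, the failure of the stopping condition together with the triangle inequality $\dist(y,V)\leq \dist(y,V+x_{Q'})+\dist(x_{Q'},V)$ and the monotonicity \eqref{e:betamonotone} yield $\beta_X(x,r,V)<\beta$ for $x\in Q'$ and $r$ comparable to $\ell(Q')$, provided $\beta_1$ has been taken small enough in terms of $\beta$; hence $\xi_{X,V}^{\theta,\beta}(x,r)=0$ at such scales. Consequently, for any $x\in Q$, nonzero contributions to $\int_0^{\ell(Q)}\xi^2\,dr/r$ arise only from scales below the unique stopped cube $Q_j$ containing $x$, when such a cube exists. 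Setting
\[
E_{Q,V}':=\Bigl(Q\setminus\bigcup_j Q_j\Bigr)\cup\bigcup_j(E_{Q_j}\cap Q_j),
\]
the integral bound \eqref{e:int eta<1} follows from the previous paragraph, while $|E_{Q,V}'|\gec |Q|$ follows from disjointness of $\{Q_j\}$, the bound $|E_{Q_j}\cap Q_j|\gec |Q_j|$, and the identity $|Q\setminus\bigcup_j Q_j|+\sum_j|Q_j|=|Q|$.

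The main obstacle will be the quantitative calibration just described: matching the unshifted $\beta_X(\cdot,\cdot,V)$ that appears in the definition of $\xi$ to the shifted condition $\beta_X(\cdot,\cdot,V+x_{Q'})$ that governs applicability of Lemma \ref{l:generalV}. These two quantities differ by an additive offset of size $\dist(x_{Q'},V)/r$, and $\beta_1$ must be chosen small enough, and $\kappa$ large enough, that the good-cube condition controls the \emph{original} $\beta_X$ rather than just the shifted one at every relevant scale. The resulting admissible range for $\beta_1$ is what will fix the threshold $\beta_0$ in the statement.
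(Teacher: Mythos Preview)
Your overall architecture---a stopping-time decomposition inside $Q$ followed by an application of Lemma~\ref{l:generalV} on the stopped cubes---is the same as the paper's. The important difference is which quantity you stop on. You stop on the \emph{shifted} condition (existence of $y\in\kappa B_{Q'}\cap X$ with $\dist(y,V+x_{Q'})\ge\beta_1\ell(Q')$), which is exactly the hypothesis of Lemma~\ref{l:generalV}; the paper instead stops on the \emph{unshifted} condition $\beta_X(Q_j,V)\ge\beta^2$, which is exactly what appears in the definition of $\xi_{X,V}^{\theta,\beta}$. Consequently, in the paper's argument the statement ``$\xi(x,r)=0$ at all non-stopped scales'' is automatic and requires no calibration at all, whereas the step that needs care is verifying the hypothesis of Lemma~\ref{l:generalV} on $Q_j$. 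Your choice reverses which half is automatic and which half needs work.

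The gap in your version is the step you flag as the ``main obstacle,'' and your proposed resolution does not close it. You write $\dist(y,V)\le\dist(y,V+x_{Q'})+\dist(x_{Q'},V)$ and assert that choosing $\beta_1$ small forces $\beta_X(x,r,V)<\beta$. But the shifted stopping condition gives you no control on $\dist(x_{Q'},V)$; it only tells you $X$ stays close to the \emph{translate} $V+x_{Q'}$, not to $V$ itself. Concretely, take $X$ to be an affine $d$-plane parallel to $V$ at distance $D\gg\ell(Q)$. Then $V+x_{Q'}=X$ for every $Q'$, so your stopping condition fails at every scale and your decomposition declares $E_{Q,V}'=Q$; yet $\beta_X(x,r,V)=D/r\ge\beta$ for all $r\le\ell(Q)$, so $\xi(x,r)=\eta_X^{V,\theta}(x,r)$ never vanishes and the iterative triangle-inequality estimate you sketch only produces $\dist(y,V)\lesssim\beta_1\ell(Q)+\dist(x_Q,V)$, with the last term of order $D$. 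No choice of $\beta_1$ or $\kappa$ repairs this. If you want to keep your direction of stopping, you must separately handle the case where $Q$ itself (or $\check Q$) already satisfies the shifted condition, and when it does not, you need an additional argument pinning $V$ near $V+x_Q$; the paper sidesteps this entirely by stopping on the unshifted $\beta_X(\cdot,\cdot,V)$ and then using maximality of $Q_j$ (so that the parent has small $\beta_X(\cdot,V)$) when invoking Lemma~\ref{l:generalV}.
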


\begin{proof}
Let $\xi=\xi_{X,V}^{\theta,\beta}$, where $\theta$ and $\beta$ will be picked later. For $Q\in \cD$, let
\[
\beta_{X}(Q,V) =  \sup_{y\in 3B_Q\cap X} \frac{\dist(y,V)}{\ell(Q)}, \;\; \beta_{X}(Q) = \inf_{V\in \cA(n,d)} \beta_{X}(Q,V).
\]
Let $Q\in \cD$, let $\check{Q}$ be the maximal cube in $Q$ containing the same center as $Q$ so that $\ell(\check{Q})<\frac{c_0\ell(Q)}{3C}$ (where $C$ is as in Lemma \ref{l:generalV}), let $Q_{j}$ be those maximal cubes in $\check{Q}$ for which $\beta_{X}(Q_{j},V)\geq \beta^2$, and set $S=\bigcup Q_{j}$. Note that there could be no such cubes in which case $S=\emptyset$. \\

\noindent {\bf Case 1:} If $|S|<\frac{1}{2}|\check{Q}|$, then set $E_{Q,V}'=Q\backslash S$, so then 
\[
|E_{Q,V}'|
\geq \frac{1}{2}|\check{Q}|\gec |Q|.
\]
For $x\in E_{Q,V}'$ and $0<r<\ell(Q)$, we can find $Q'\subseteq Q$ containing $x$ so that $\rho \ell(Q')< r\leq \ell(Q')$. Since $x\in Q'\cap  E_{Q,V}'$ and $B(x,r)\subseteq B(x,\ell(Q))\subseteq 3B_Q$, we have
\[
\beta_{X}(x,r,V)
\stackrel{\eqref{e:betamonotone}}{ \lec} \beta_{X}(x,\ell(Q'),V)
\leq \beta_{X}(Q',V)<\beta^{2} .
\]
Thus, for $\beta>0$ small enough, $\beta_{X}(x,r,V)<\beta$ and so $\xi(x,r)=0$. In particular, since $0<r<\ell(Q)$ was arbitrary, we have
\[
\int_{0}^{\ell(Q)}\xi(x,r)^{2}\frac{dr}{r}=0 \;\; \mbox{ for all }x\in E_{Q,V}'.
\]

\noindent {\bf Case 2:} Alternatively, suppose $|S|\geq \frac{|\check{Q}|}{2}$. Since $\beta_{X}(3B_{Q_j},V)\geq \beta$, Lemma \ref{l:generalV} implies there is a set 
\[
E_{3B_{Q_j}}^{V}\subseteq 3CB_{Q_j}\cap X\subseteq Q\] 
upon which 
\[
\int_{0}^{3\ell(Q_{j})} \eta_{X}^{V,\theta}(x,r)^2\frac{dr}{r}\lec_{\beta} 1 \mbox{ for all }x\in E_{3B_{Q_j}}^{V}
\]
for some $\theta$ small enough depending on $\beta$. Let $j_{k}$ be such that $\{3CB_{Q_{j_{k}}}\}$ is a $5r$-subcovering of $\{3CB_{Q_{j}}\}$. Since the balls $\{3CB_{Q_{j_{k}}}\}$ are disjoint, if we set 
\[
E_{Q,V}' = \bigcup E_{3B_{Q_j}}^{V}
\subseteq \bigcup 3CB_{Q_{j_{k}}}\cap X
\subseteq 3CB_{\check{Q}}\cap X\subseteq c_0 B_{Q}\cap X\subseteq Q,
\]
then
\begin{align*}
|E_{Q,V}'|
& =\sum |E_{3B_{Q_{j_{k}}}}|
\gec \sum|Q_{j_{k}}|
\sim \sum |15B_{Q_{j_{k}}}\cap X|
\geq \av{\bigcup 15B_{Q_{j_{k}}}\cap X}\\
& 
\geq \av{\bigcup 3CB_{Q_{j}}} \geq \av{\bigcup Q_{j}} \geq \frac{1}{2} |\check{Q}|\gec |Q|.
\end{align*}

Now let $x\in E_{Q,V}'$ and $0<r<\ell(Q)$. Observe that if  there is a cube $x\in R\subseteq Q$ with $\rho \ell(R)< r\leq \ell(R)$ and $\beta_{X}(R,V)<\beta^2$, then since $B(x,r)\subseteq 2B_{R}$,
\[
\beta_{X}(x,r,V) 
\lec \beta_{X}(Q',V)<\beta^{2}
\]
and so for $\beta$ small enough $\beta_{X}(x,r,V)<\beta$, hence $\xi(x,r)=0$ for such $r$. We will use this observation in the following cases.\\

\noindent {\bf Case 2.1:} Suppose $x\not\in S$, then for every cube $R\subseteq \check{Q}$ containing $x$, $\beta_{X}(R,V)<\beta^2$, and the above observation implies $\eta(x,r)=0$ for all $r\leq \ell(\check{Q})$. Since $\eta(x,r)\leq 1$ for $\ell(\check{Q})\leq r\leq \ell(Q)$, this implies \eqref{e:int eta<1}. \\

\noindent {\bf Case 2.2:} If $x\in S$, then there is $Q_{j}$ containing $x$ and so $x\in E_{3Q_{j}}^{V}\cap Q_j$. \\

\noindent {\bf Case 2.2.1:} If $Q_j\subset\neq Q$, then for $r> \ell(Q_{j})$, there is $Q'\subseteq Q$ properly containing $Q_{j}$ so that $\rho \ell(Q')< r\leq \ell(Q')$. Since $Q_j$ was maximal, $\beta_{X}(Q',V)<\beta^{2}$, and the earlier observation implies $\xi(x,r)=0$. Hence, $\xi(x,r)=0$ for all $r$ such that $\ell(Q_{j})< r\leq \ell(Q)$. Thus, recalling that $x\in E_{3B_{Q_{j}}}^{V}$,
\[
\int_{0}^{\ell(Q)}\xi(x,r)^{2}\frac{dr}{r} 
=\int_{0}^{\ell(Q_j)}\xi(x,r)^{2}\frac{dr}{r} 
\leq \int_{0}^{\ell(Q_j)}\eta_{X}^{V,\theta}(x,r)^{2}\frac{dr}{r} 
\lec_{\beta,X} 1.
\]
\noindent {\bf Case 2.2.2:} If $Q_j=Q$, then the above equation holds yet again.\\

This finishes the proof of the lemma.

\end{proof}

\begin{lemma}
\label{l:xi-carleson}
Choose $\beta$ and $\theta$ so that the conclusions of the previous lemma hold for $\beta\rho/4$ in place of $\beta$. For $Q\in \cD$ and $V\in \cG(n,d)$, let 
\[
\xi_{X,V}^{\theta,\beta}(Q) = \sup_{x\in Q} \xi_{X,V}^{\theta,\beta}(x,2\ell(Q)).\]
Then for all $\delta>0$ and $R\in \cD$,
\begin{equation}
\label{e:xi-carleson}
\sum_{Q\subseteq R \atop \xi_{X,V}^{\theta,\beta}(Q)\geq \delta} |Q|\lec_{\delta,\beta} |R|.
\end{equation}
\end{lemma}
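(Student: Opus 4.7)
The plan is to mimic the argument used in Part II of the proof of Lemma \ref{l:eta-carleson}, with $\xi$ in place of $\eta$ and Lemma \ref{l:xi-int} in place of Lemma \ref{l:generalV}. The reason the statement uses $\beta\rho/4$ in the hypothesis (via the choice in Lemma \ref{l:xi-int}) but $\beta$ in the conclusion is precisely to absorb the loss incurred when comparing $\xi$-numbers at two different scales through the monotonicity \eqref{e:xi-monotone}; when the scales differ by a factor bounded by $1/\rho$, the smaller threshold $\beta\rho/4$ at the larger scale suffices to detect points where $\xi$ at threshold $\beta$ is positive at the smaller scale.

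I would begin by applying the David--Semmes Lemma IV.1.12 to reduce \eqref{e:xi-carleson} to showing that for each $R\in\cD$ there is $G_R\subseteq R$ with $|G_R|\gec |R|$ and $\sum_{x\in Q\subseteq R}\xi_\delta(Q)\lec_\delta 1$ on $G_R$, where $\xi_\delta(Q)=\one\{\xi_{X,V}^{\theta,\beta}(Q)\geq\delta\}$. Lemma \ref{l:xi-int} (applied with threshold $\beta\rho/4$) produces a set $E_R'\subseteq R$ with $|E_R'|\gec_\beta |R|$ on which $\int_0^{\ell(R)}\xi_{X,V}^{\theta,\beta\rho/4}(x,r)^2\frac{dr}{r}\lec_\beta 1$. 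From $E_R'$ I would extract (exactly as in the proof of Lemma \ref{l:eta-carleson}) a subset $F_R$ of comparable measure with the property \eqref{e:Qcap F_R}, then apply Lemma \ref{l:porous} to discard the porous family $\cP_R$ and work with the good family $\cC_R=\{Q\subseteq R:Q\cap F_R\neq\emptyset, Q\not\in\cP_R\}$.

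The crux is the analogue of Claim 2 in Lemma \ref{l:eta-carleson}: for $Q\in\cC_R$ and $\phi$ small (depending on $\delta,\beta$), if
\[
\avint_{E_R'\cap\tilde Q}\int_{\ell(Q)}^{\ell(Q)/\rho}\xi_{X,V}^{\theta,\beta\rho/4}(x,r)^2\frac{dr}{r}<\phi,
\]
then $\xi_{X,V}^{\theta,\beta}(Q)<\delta$. To prove this, I would argue contrapositively: if some $x_0\in Q$ satisfies $\xi_{X,V}^{\theta,\beta}(x_0,2\ell(Q))\geq\delta$, then by definition $\beta_X(x_0,2\ell(Q),V)\geq\beta$ and $\eta_X^{V,\theta}(x_0,2\ell(Q))\geq\delta$. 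For any $y\in\tilde Q$ and any $r\in[\ell(Q),\ell(Q)/\rho]$, containment of balls gives $\beta_X(y,r,V)\gec\rho\beta>\beta\rho/4$, so $\xi_{X,V}^{\theta,\beta\rho/4}(y,r)=\eta_X^{V,\theta}(y,r)$; then the Lipschitz estimate \eqref{e:eta-lip} together with the monotonicity \eqref{e:eta-monotone} forces $\eta_X^{V,\theta}(y,r)\gec\delta'$ for some $\delta'=\delta'(\delta,\rho)>0$ throughout an interval of scales of length comparable to $\log(1/\rho)$. Integrating and averaging over $y\in E_R'\cap\tilde Q$ (which has measure $\gec|Q|$ by \eqref{e:Qcap F_R}) then contradicts the hypothesis for $\phi$ sufficiently small.

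Given this Claim, the conclusion follows exactly as in the proof of Lemma \ref{l:eta-carleson}: Chebychev against the reverse inequality together with the bounded overlap of the enlargements $\{\tilde Q:Q\in\cD_k\}$ and \eqref{e:int eta<1} yield $\sum_{Q\in\cC_R}\xi_\delta(Q)|Q|\lec_{\delta,\beta}|R|$, which combined with \eqref{e:sumP} gives $\sum_{Q\subseteq R, Q\cap F_R\neq\emptyset}\xi_\delta(Q)|Q|\lec|R|$, and one then defines $G_R$ via the sum threshold as before. The principal obstacle is the delicate handling of the $\beta$-threshold discontinuity of $\xi$: one must carefully verify that the scale inflation from $2\ell(Q)$ to $\ell(Q)/\rho$ together with the threshold drop from $\beta$ to $\beta\rho/4$ interact compatibly with the monotonicity \eqref{e:xi-monotone}, so that wherever $\xi_{X,V}^{\theta,\beta}$ is detected as large at one scale, $\xi_{X,V}^{\theta,\beta\rho/4}$ contributes a quantitative amount of integral over a logarithmic range of nearby scales.
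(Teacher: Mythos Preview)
Your overall plan matches the paper's, and the reduction via Lemma IV.1.12, the construction of $F_R$ from $E_R'$, and the splitting into $\cP_R$ versus $\cC_R$ are all correct. The gap is in your contrapositive argument for the analogue of Claim~2.

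You assert that if $\xi_{X,V}^{\theta,\beta}(x_0,2\ell(Q))\geq\delta$ then for \emph{every} $y\in\tilde Q$ and every $r\in[\ell(Q),\ell(Q)/\rho]$ one has $\eta_X^{V,\theta}(y,r)\gec\delta'$. This fails. A point $y\in\tilde Q$ can satisfy $|y-x_0|\sim 6\ell(Q)$, so the Lipschitz estimate \eqref{e:eta-lip} only gives $\eta(y,r)\geq\eta(x_0,r)-|x_0-y|/r$; monotonicity \eqref{e:eta-monotone} yields at best $\eta(x_0,r)\geq(2\ell(Q)/r)\delta$, and the right-hand side is then $(2\delta-6)\ell(Q)/r<0$ once $\delta<3$. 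The same issue afflicts your containment-of-balls step for the $\beta$-threshold: $B(x_0,2\ell(Q))\subseteq B(y,r)$ requires $r\geq |y-x_0|+2\ell(Q)$, which need not hold on the whole interval $[\ell(Q),\ell(Q)/\rho]$. In short, information at the single point $x_0$ does not propagate to all of $\tilde Q$ via \eqref{e:eta-lip}, because the transfer error $|x_0-y|/r$ is of order~$1$, not of order~$\delta$.

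The paper proceeds forward rather than contrapositively, and this is exactly what circumvents the difficulty. From the smallness hypothesis together with \eqref{e:xi-monotone}--\eqref{e:betabeta'} one first deduces a pointwise bound $\avint_{E_R\cap\tilde Q}\xi_{X,V}^{\theta,\beta/2}(\cdot,3\ell(Q))^2\lec\phi$, then Chebychev produces a large set $S_Q\subseteq E_R\cap\tilde Q$ on which $\xi_{X,V}^{\theta,\beta/2}(\cdot,3\ell(Q))<\phi^{1/4}$. The porosity argument (identical to that in Lemma~\ref{l:eta-carleson}) shows every $x\in Q$ lies within $\frac{\delta}{2}\ell(Q)$ of some $x'\in S_Q$; now the Lipschitz error is $<\delta/2$ and harmless. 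The $\beta$-threshold discontinuity is then handled by a dichotomy at $x'$: either $\beta_X(x',3\ell(Q),V)<\beta/2$, in which case the containment $B(x,2\ell(Q))\subseteq B(x',3\ell(Q))$ forces $\beta_X(x,2\ell(Q),V)<\beta$ and hence $\xi_{X,V}^{\theta,\beta}(x,2\ell(Q))=0$; or $\beta_X(x',3\ell(Q),V)\geq\beta/2$, in which case $\xi_{X,V}^{\theta,\beta/2}(x',3\ell(Q))=\eta_X^{V,\theta}(x',3\ell(Q))<\phi^{1/4}$ and one transfers to $x$ via \eqref{e:eta-lip} and \eqref{e:eta-monotone}. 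Your contrapositive could be salvaged by restricting to $y$ in a cube of sidelength $\sim\delta\ell(Q)$ near $x_0$ that meets $F_R$ (such a cube exists since $Q\not\in\cP_R$) and invoking \eqref{e:Qcap F_R} to harvest $\sim\delta^d|Q|$ worth of $E_R'$-points on which $\eta\gec\delta$; but this is substantially more delicate than what you wrote, and the paper's forward argument with the case split is cleaner.
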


\begin{proof}
This is shown in much the same way as Lemma \ref{l:eta-carleson}, and we will refer to notation and parts of the proof from there. Without loss of generality, we can assume $\delta\in (0,\min\{\beta,1\})$. Let $E_{R} = E_{R,V}'$ be from the previous lemma (but with $\beta\rho/2$ instead of $\beta$),  so again $|E_{R}|\geq c |R|$ for some constant $c$ depending on $\beta$. Let $Q_{j}$ be the maximal cubes in $R$ for which $|Q_{j}\cap E_{R}|<\frac{c}{2} |Q_{j}|$ and set  $F_{R}= E_{R}\backslash \bigcup Q_{j}$, so again we have  $|F_{R}|\geq \frac{c}{2} |R|$ and 
\begin{equation}
\label{e:Qcap F_R2}
|E_{R}\cap Q|\geq \frac{c}{2} |Q| \;\;\mbox{ whenever }\;\; Q\cap F_{R}\neq\emptyset \mbox{ and }Q\subseteq R.
\end{equation}

Define $F_{R}$ just as before, let $\cP_{R}$ be the cubes from Lemma \ref{l:porous}. Define $\cC_{R}$ and $\tilde{Q}$ be as before. Let $\phi>0$ and $Q\in \cC_{R}$ be so that 
\begin{equation}
\label{e:E_Rcap Q-int2}
\avint_{E_{R}\cap \tilde{Q}}\int_{\ell(Q)}^{6\ell(Q)/\rho} \xi_{X,V}^{\theta,\beta\rho/4}(x,r)^{2}\frac{dr}{r} <\phi.
\end{equation}

Using \eqref{e:xi-monotone} and \eqref{e:betabeta'}, we see that for $s\in[3\ell(Q)/\rho,6\ell(Q)/\rho]$,
\begin{align*}
\xi_{X,V}^{\theta,\beta/2}(x,3\ell(Q)) 
& \stackrel{\eqref{e:xi-monotone}}{\leq}  \frac{s}{3\ell(Q)} \xi_{X,V}^{\theta,\beta\frac{3\ell(Q)}{2s}}(x,s)\\
& \stackrel{\eqref{e:betabeta'}}{\leq}
 \frac{2}{\rho}\xi_{X,V}^{\theta,\beta\rho/4}(x,s).
\end{align*}

This and \eqref{e:E_Rcap Q-int2} imply 

\begin{equation}
\label{e:inteta<phi}
\avint_{E_{R}\cap \tilde{Q}}\xi_{X,V}^{\theta,\frac{\beta}{2}}(x, 3\ell(Q))^{2} \lec \phi.
\end{equation}

\noindent {\bf Claim:} for $\phi$ small enough the above inequality implies
\begin{equation}
\label{e:xi<delta}
\xi_{X,V}^{\theta,\beta}(Q)<\delta .
\end{equation}
Assume \eqref{e:inteta<phi}. Again, set
\[
S_{Q} = \{x\in E_{R}\cap \tilde{Q}: \xi_{X,V}^{\theta,\frac{\beta}{2}}(x,3\ell(Q))<\phi^{1/4}\}.\]
Hence, for $\phi$ small enough, we again have by Chebychev
\[
|\tilde{Q}\cap E_{R} \backslash S_{Q}|
<\phi^{\frac{1}{2}}|Q|
\]
and with the same proof as before, for $\phi>0$ small enough (depending on $\delta$ and $\beta$),
\[
\dist(x,S_{Q})< \frac{\delta}{2} \ell(Q) \;\; \mbox{ for all }x\in Q, \;\; Q\in \cC_{R}.\]
Hence, for $x\in Q\in \cC_{R}$, there is $x'\in S_{Q}$ with 
\begin{equation}
\label{e:x-x'<delta/2}
|x-x'|<\frac{ \delta}{2} \ell(Q).
\end{equation}

{\bf Case 1:} If $\beta_{X}(x',3\ell(Q),V)<\frac{\beta}{2}$, then \eqref{e:x-x'<delta/2} and $\delta<1$ imply $B(x,2\ell(Q))\subseteq 3B_{Q})$ and so
\[
\beta_{X}(x,2\ell(Q),V) \leq \frac{3}{2}\beta_{X}(x',3\ell(Q),V)
<\frac{3\beta}{4}<\beta.
\]
and so $\xi_{X,V}^{\theta,\beta}(x,2\ell(Q),V)=0<\delta$. \\

{\bf Case 2:} Alternatively, if $\beta_{X}(x',3\ell(Q),V)\geq\frac{\beta}{2}$, then $\xi_{X,V}^{\theta,\frac{\beta}{2}}(x',3\ell(Q))=\eta_{X}^{V,\theta}(x',3\ell(Q))$, and so for $\phi>0$ small enough (depending on $\delta$), because $x'\in S_{Q}$,
\begin{align*}
\xi_{X,V}^{\theta,\beta} (x,2\ell(Q),V) & \leq \eta_{X}^{V,\theta}(x,2\ell(Q),V) \\
& \stackrel{\eqref{e:eta-lip}}{\leq} 
\frac{|x-x'|}{2\ell(Q)}+
\eta_{X}^{V,\theta}(x',2\ell(Q),V)\\
&   \stackrel{\eqref{e:eta-monotone} \atop \eqref{e:x-x'<delta/2}}{\leq}  \frac{\delta}{4}  + \frac{3}{2}\eta_{X}^{V,\theta}(x',3\ell(Q),V)\\
&  =  \frac{\delta}{4} +\frac{3}{2}\xi_{V}^{\theta,\frac{\beta}{2}}(x',3\ell(Q))< \frac{\delta}{4} +\frac{3}{2}\phi^{\frac{1}{4}}<\delta.
\end{align*}
This proves \eqref{e:xi<delta} and hence the claim.  In particular, 
\begin{equation}
\label{e:E_Rcap Q-int2-implication}
\mbox{ if } \;\; \xi_{X,V}^{\theta,\beta}(Q)\geq \delta, \;\; \mbox{ then }\;\; \avint_{E_{R}\cap Q}\int_{\ell(Q)}^{4\ell(Q)/\rho} \xi_{X,V}^{\theta,\beta}(x,r)^{2}\frac{dr}{r} \geq \ve.
\end{equation}

Let
\[
\xi_{\delta} (Q)=\isif{ 1 & \xi_V(Q)  \geq \delta \\ 
0 &  \xi_V(Q) < \delta}
\]
Then by our choice of $E_{R}$,
\begin{align*}
\sum_{Q\in \cC_{R}}\xi_{\delta}(Q)|Q|
& \leq \sum_{Q\in \cC_{R} \atop \xi_{X,V}^{\theta,\beta}(Q)\geq \delta }|Q|
 \leq \frac{2}{c}  \sum_{Q\in \cC_{R} \atop \xi_{X,V}^{\theta,\beta}(Q)\geq \delta } |E_{R}\cap Q|\\
& \stackrel{\eqref{e:E_Rcap Q-int2-implication}}{\leq}  \frac{2}{c\ve } \sum_{Q\subseteq R\atop Q\cap F_{R}\neq\emptyset  } \int_{E_{R}\cap Q}\int_{ \ell(Q)}^{6\ell(Q)/\rho} \xi_{X,V}^{\theta,\beta\rho/4}(x,r)^{2}\frac{dr}{r} \\
& \lec \frac{2}{c\ve }  \int_{E_{R}}\int_{0}^{6\ell(R)/\rho} \xi_{X,V}^{\theta,\beta\rho/4} (x,r)\frac{dr}{r}
\lec_{\phi} |R|.
\end{align*}

Just as before, for each $R\in \cD$ we can now find $G_{R}\subseteq R$ so that $|G_{R}|\gec |R|$ and
\[
\sum_{x\in Q\subseteq R}\xi_{\delta}(Q)\lec_{\delta} 1 \;\; \mbox{ for all } \;\; x\in G_{R}.
\]

This completes the proof.

\end{proof}

\subsection{$\eta$ and $\xi$ small imply $\beta$ is small}

\begin{lemma}
\label{l:eta-xi-beta}
For all $0<\beta<\beta_{0}\rho/4$, there are $\theta,\ve_0>0$ so that the following holds: for all $\ve\in (0,\ve_{0})$, if $\cV$ is a maximally $\ve$-separated set in $\cG(n,d)$, and for  $Q\in \cD$ we define 
\[
\xi_{X}^{\theta,\beta}(Q) = \sum_{V\in \cV} \xi_{X,V}^{\theta,\beta}(Q).
\]
and if $\eta_X^{\theta}(Q)<\ve$ and $\xi_{X}^{\theta,\beta}(Q)<\ve$, then $\beta_{X}(B_{Q})\lec \beta$.
\end{lemma}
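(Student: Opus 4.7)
I would use the $d$ almost-orthogonal line segments provided by $\eta_X^{\theta}(Q) < \ve$ at the centre $x_Q$ to identify a candidate $d$-plane $W$, select a plane $V \in \cV$ close to $W$, and apply the $\xi$-dichotomy. This reduces the problem to ruling out a ``bad case'' in which one could extract an extra line segment close to $X$ in general position with respect to $W$, which I would rule out by a compactness argument exploiting the $d$-rectifiability of $X$ (available via Cheeger's theorem and Theorem \ref{t:rectifiable}).

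Concretely, apply $\eta_X^{\theta}(x_Q, \ell(Q)) < \ve$ to obtain lines $L_1, \ldots, L_d$ through $x_Q$ with $\angle(L_{k+1}, U_k) \geq \theta/2$ for $U_k := \spn(L_1, \ldots, L_k)$, each lying within $\ve \ell(Q)$ of $X$ in $B(x_Q, \ell(Q))$; set $W := U_d \in \cG(n, d)$. By $\ve$-density of $\cV$ pick $V \in \cV$ with $\angle(V, W) < \ve$, and apply the dichotomy in $\xi_{X, V}^{\theta, \beta}(x_Q, 2\ell(Q)) < \ve$. In the good case $\beta_X(x_Q, 2\ell(Q), V + x_Q) < \beta$, and a direct estimate using $\angle(V, W) < \ve$ gives
\[
\beta_X(x_Q, \ell(Q), W + x_Q) \leq 2\beta + \ve \lec \beta,
\]
completing the proof. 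In the bad case we obtain a line $L_{d+1}$ through $x_Q$ with $\angle(L_{d+1}, V) \geq \theta$ lying within $O(\ve \ell(Q))$ of $X$ in $B(x_Q, 2\ell(Q))$; by \eqref{e:angle-triangle} and $\ve \ll \theta$ this yields $\angle(L_{d+1}, W) \geq \theta/2$, so $L_1, \ldots, L_{d+1}$ are $d+1$ line segments through $x_Q$ with pairwise angle separation $\gec \theta$ lying close to $X$.

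To rule out the bad case I would argue by contradiction and compactness in the spirit of Lemma \ref{l:flat-ball}. Given a hypothetical sequence $(X_j, Q_j)$ with $\eta_{X_j}^\theta(Q_j), \xi_{X_j}^{\theta, \beta}(Q_j) < \ve_j \to 0$ but $\beta_{X_j}(B_{Q_j}) \geq C \beta$ for large $C$, rescale so that $B_{Q_j} = \bB$ and extract (by Lemma \ref{l:limlem}) a subsequential Hausdorff limit $X$, which is Ahlfors $d$-regular and Loewner, hence $d$-rectifiable by Theorem \ref{t:rectifiable}. Since both hypotheses hold at every $x \in Q_j \subseteq X_j$, the limit inherits, at every $x \in X \cap \bB$, both (i) $d$ honest line segments of $X$ through $x$ in general position, and (ii) for each $V \in \cV$, either $\beta_X(x, 2, V+x) \leq \beta$ or an honest line of $X$ through $x$ making angle $\geq \theta$ with $V$. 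For $C > 2$ the first alternative in (ii) would yield $\beta_X(\bB) \leq 2\beta$, contradicting $\beta_X(\bB) \geq C\beta$; hence at every $x \in X\cap \bB$ the second alternative holds for every $V \in \cV$. Choosing $V \in \cV$ close to the span of the first $d$ lines through $x$ then produces $d+1$ honest line segments of $X$ through every $x \in X \cap \bB$ with pairwise angle separation $\gec \theta$.

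The contradiction arises at an approximate tangent point $x^* \in X \cap \bB$, which exists by $d$-rectifiability and Ahlfors regularity. At such a point $X$ is approximately contained in a single tangent $d$-plane $T + x^*$ at scales $r \to 0$, so each of the $d+1$ honest line segments of $X$ through $x^*$ must lie in $T + x^*$: a point on such a line $L$ at distance $r$ from $x^*$ has distance $r \sin \angle(L, T)$ from $T + x^*$, but also lies in $X \cap B(x^*, r)$ and hence has distance $o(r)$ from $T + x^*$ by the tangent property, forcing $\angle(L, T) = 0$. But $d+1$ vectors with pairwise angles bounded below by $\theta/2$ cannot all fit in a $d$-plane, the contradiction. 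The main obstacle I anticipate is handling the passage to the limit of the $\xi$-dichotomy uniformly in $x \in X \cap \bB$ and $V \in \cV$, since the dichotomy may split differently at different $(x, V)$ in the prelimit; one argues that the ``honest line'' alternative is stable under Hausdorff convergence while the ``$\beta_X \leq \beta$'' alternative is ruled out by $\beta_X(\bB) \geq C\beta$, so the former must hold for all $(x, V)$ in the limit.
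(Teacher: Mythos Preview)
Your approach is correct but differs from the paper's in how the contradiction is obtained. Both arguments begin identically: assume $\beta_X(B_Q)$ is large, use $\eta_X^\theta(Q)<\ve$ to produce $d$ transversal segments spanning a $d$-plane $V$ (at some point $x\in Q$), pick $U\in\cV$ close to $V$, observe that the $\beta$-alternative in the $\xi$-dichotomy is excluded by $\beta_X(B_Q)\gg\beta$, and extract a $(d{+}1)$-st segment transversal to $V$. The divergence is in how one then reaches a contradiction.

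The paper does \emph{not} run a new compactness argument here. Instead it invokes Lemma~\ref{l:flat-ball} (already proved via compactness and rectifiability) to locate, \emph{before} choosing $x$, a sub-ball $B'\subseteq c_0 B_Q$ with $r_{B'}\gtrsim_{\theta'}\ell(Q)$ and $\beta_X(B')<\theta'$; it then takes $x$ to be the centre of $B'$. The $d{+}1$ segments through $x$, each $\ve$-close to $X$ on scale $\ell(Q)$, force $\beta_X(B')\gtrsim\theta$ (any $d$-plane approximating $X\cap B'$ must be close to $V$, hence far from $L_{d+1}$), contradicting $\beta_X(B')<\theta'\ll\theta$. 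This is a direct finite-scale geometric argument once the flat ball is in hand.

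Your route instead passes to a Hausdorff limit of rescaled counterexamples and uses rectifiability of the limit to find an approximate tangent point, where the $d{+}1$ honest line segments cannot coexist with a $d$-dimensional tangent. This is essentially the proof of Lemma~\ref{l:flat-ball} inlined and adapted to the present situation. It works, but it duplicates machinery the paper has already packaged, and it introduces the technical wrinkle you flag (passing the $\xi$-dichotomy to the limit uniformly in $(x,V)$, with $\cV=\cV_{\ve_j}$ varying). The paper's approach sidesteps this entirely: since it stays at a single fixed $(X,Q)$ and a single point $x$, no limiting procedure for the dichotomy is needed. A minor point: your argument gives the $d{+}1$ segments only at points of $X\cap c_0\bB$ (the limit of the cubes $Q_j$), not all of $X\cap\bB$, so the tangent point $x^*$ should be chosen there; this is harmless since that set has positive measure.
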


\begin{proof}
Let $\theta'>0$, which will be determined shortly and will depend on $\theta$ (and so ultimately on $X$ and $\beta$, but not on $
\ve$). Suppose $\eta_X^{\theta}(Q)<\ve$ and $\xi_X^{\theta,\beta}(Q)<\ve$ but $\beta_{X}(B_{Q})\geq  A\beta$ for some large constant $A>0$.

 By Lemma \ref{l:flat-ball}, there is a ball $B'\subseteq c_{0} B_{Q}$ centered on $X$ so that $\beta_{X}(B')<\theta'$ and $r_{B'}\geq c_{\theta} \ell(Q)$ for some $c_{\theta}>0$. Let $x\in B'\cap X$ be the center of this ball, so by Theorem \ref{t:Christ} $x\in Q$.

Since $\eta_X(Q)<\ve$ and $x\in Q$ there are lines $L_{1}(x),...,L_{d}(x)$ passing through $x$ so that 
\[
\sup_{y\in B(x,2\ell(Q)) \cap (L_{1}(x)\cup\cdots \cup L_{d}(x))} \frac{\dist(y,X)}{\ell(Q)}
< \ve.
\]
Let $V$ be the $d$-plane containing the lines $L_{i}(x)$. Let $U\in \cV$ be so that $\angle(V,U)<\ve$. If $L$ is the line infimizing $\eta_{X}^{U,\theta}(x,2\ell(Q))$ and we pick $\ve<\frac{\theta}{2}$, then
\[
\angle(L,V)\geq \angle(L,U)-\angle(V,U)\geq \theta-\ve >\frac{\theta}{2}
\]
and so 
\begin{equation}
\label{e:etaVV'}
\eta_{X}^{V,\theta/2}(x,2\ell(Q))
=\sup_{y\in L\cap B(x,2\ell(Q))} \frac{\dist(y,X)}{2\ell(Q)}
=\eta_{X}^{U,\theta}(x,2\ell(Q)).
\end{equation}
Since $\beta_{X}(B_{Q})\geq  A\beta$, 
\[
\beta_{X}(x,2\ell(Q))
\stackrel{\eqref{e:betamonotone}}{\gec} \beta_{X}(B_{Q})\geq A\beta.
\]
Thus, for $A$ large enough,
\[
\beta_{X}(x,2\ell(Q),U)\geq  \beta_{X}(x,2\ell(Q))\geq \beta\] 
so by the definitions of $\xi_{X}^{\theta,\beta}$ and $\xi_{X,U}^{\theta,\beta}$

\begin{align*}
\eta_{X}^{V,\theta/2}(x,2\ell(Q))
\stackrel{\eqref{e:etaVV'}}{\leq} \eta_{X}^{U,\theta}(x,2\ell(Q))
=\xi_{X,U}^{\theta,\beta}(x,2\ell(Q))
\leq \xi_{X}^{\theta,\beta}(Q)<\epsilon.
\end{align*}

Thus, there is a line $L_{d+1}(x)$ passing through $x$ so that 
\[
\angle(L_{d+1}(x),V)\geq \frac{\theta}{2}
\]
and 
\[
\sup_{y\in B(x,2\ell(Q)) \cap L_{d+1}(x)} \frac{\dist(y,X)}{\ell(Q)}
< \ve.
\]

But for $\ve$ small enough (depending on $\theta$, and $c_{\theta}$), this implies that $\beta_{X}(B')\gec \theta$ (otherwise, if $\beta_{X}(B')\ll\theta$, then the $d$-plane that best approximates $B'\cap X$ would have to be close to $V$, but then it would not contain $L_{d+1}(x)\cap B'$). Since $\beta_{X}(B')<\theta'$, this is impossible for $\theta'\ll \theta$, which gives a contradiction.
\end{proof}

\begin{proof}[Proof of Lemma \ref{l:wgl}]
By Lemmas \ref{l:d-lines}, Lemma \ref{l:xi-carleson}, and \ref{l:eta-xi-beta}, for $\beta>0$ small enough, there are $\ve>0$ and $\theta>0$ so that 
\begin{align*}
\sum_{Q\subseteq R \atop \beta_{X}(B_{Q})\geq A\beta }|Q|
& \leq \sum_{Q\subseteq R \atop \eta_{X}^{\theta}(Q)\geq \ve}|Q|
+\sum_{Q\subseteq R \atop \beta_{X}(B_{Q})\geq A\beta, \; \eta_{X}^{\theta}(Q)<\ve}|Q|\\
& \stackrel{\eqref{e:manysegments}}{\lec} |R| + \sum_{Q\subseteq R \atop \xi_{X}^{\theta,\beta}(Q)\geq \ve}|Q|\\
&  \leq  |R| + \sum_{V\in \cV} \sum_{Q\subseteq R \atop \xi_{X,V}^{\theta,\beta}(Q)\geq \ve}|Q|
\stackrel{\eqref{e:xi-carleson}}{ \lec} |R|
\end{align*}

\end{proof}

\def\cprime{$'$}

\end{document}